\title[Quadratic algebras and their symmetries] 
{A class of differential quadratic algebras \\ ~ \\ and their symmetries} 
\date{v1 January 2017; v2 April 2017}
\author{Giovanni Landi, Chiara Pagani}
\address[]{\textit{Giovanni Landi}  \newline \indent 
Matematica, Universit\`a di Trieste, \newline \indent
Via A. Valerio, 12/1, 34127  Trieste, Italy \newline \indent
and INFN, Trieste, Italy}
\email{landi@units.it}
\address[]{\textit{Chiara Pagani} \newline \indent  
Mathematisches Institut,  Georg-August Universit\"at G\"ottingen,
\newline \indent
Bunsenstra\ss e~3-5,~37073~G\"ottingen,~Germany}
\email{cpagani@uni-math.gwdg.de}
\numberwithin{equation}{section}
\theoremstyle{plain}
\newtheorem{thm}{Theorem}[section]
\newtheorem{lem}[thm]{Lemma}
\newtheorem{prop}[thm]{Proposition}
\newtheorem{cor}[thm]{Corollary}
\theoremstyle{definition}
\newtheorem{defi}[thm]{Definition}
\newtheorem{rem}[thm]{Remark}
\newcommand{\nn}{\nonumber}
\newcommand{\ot}{\otimes}
\newcommand{\beq}{\begin{equation}}
\newcommand{\eeq}{\end{equation}}
 \newcommand{\id}{\mathrm{id}}
\newcommand{\IC}{\mathbb{C}}
\newcommand{\IN}{\mathbb{N}}
\newcommand{\IR}{\mathbb{R}}
\newcommand{\IT}{\mathbb{T}}
\newcommand{\IS}{\mathbb{S}}
\renewcommand{\k}{\mathbb{K}} 
\newcommand{\II}{\mbox{\rm 1\hspace {-.6em} l}}
\newcommand{\cR}{\mathcal{R}}
\newcommand{\ii}{\mathrm{i}}
\newcommand{\dd}{\mathrm{d}}
\newcommand{\dx}{\dd x}
\newcommand{\alg}{\mathrm{\bold{A}}}
\newcommand{\cc}{\mathrm{c}} 
\newcommand{\alp}{\alg_{\ell,p}}
\newcommand{\omlp}{\Omega_{\ell,p}}
\newcommand{\mlp}{\mathrm{\bold{M}}_{\ell,p}}
\newcommand{\A}{M}
\newcommand{\arlp}{\alg({\IR}^4_{\ell,p})}
\newcommand{\rlp}{{\IR}^4_{\ell,p}}
\newcommand{\slp}{{\IS}^3_{\ell,p}}
\newcommand{\aslp}{\alg({\IS}^3_{\ell,p})}
\newcommand{\olp}{O_{\ell,p}(4)}
\newcommand{\Au}{\alg_\textbf{u}}
\newcommand{\Su}{{\IS}^3_\textbf{u}}
\renewcommand{\int}{\{0,1,2,3\}}
\newcommand\nt {{\tilde{\nu}}}
\newcommand\mn {{\mu_{\nu}}}
\newcommand\mt{{\tilde{\mu}_{\nu}}}
\begin{document}

\subjclass[2010]{Primary 16S37; Secondary 16T10, 20G42}
\keywords{Quadratic algebras. Noncommutative planes and spheres. 
\newline \indent Covariant differential calculi. Bialgebras and quantum groups. Sklyanin algebras.}

\begin{abstract}
We study a multi-parametric family of quadratic algebras in four generators, which includes coordinate algebras of noncommutative four-planes and, as quotient algebras, noncommutative three-spheres. 
Particular subfamilies comprise Sklyanin algebras and Connes--Dubois-Violette planes.
We determine quantum groups of symmetries for the general algebras and construct finite-dimensional covariant differential calculi.
\end{abstract}

\maketitle

\tableofcontents
\parskip 1.25ex

\newpage
\section{Introduction}

Many important examples of noncommutative spaces and quantum groups (from FRT bialgebras  and Woronowicz quantum groups, to Manin's quantum plane, $\theta$-planes and spheres, and beyond ...) have a description as quadratic algebras, finitely generated and finitely presented, or as quotients of quadratic algebras. 
In this paper we introduce a multi-parametric family of non commutative quadratic algebras $\alp$ (over a ground field $\k$), depending on parameters $\ell = (\ell_{\mu\nu})$ and 
$p=(p_{\mu\nu})$, with ${\mu,\nu}=0,1,2,3$, that obey some minimal conditions (see Definition \ref{def:lp}). 
The algebras $\alp$ are generated by degree-one elements $x_\mu$, $\mu=0,1,2,3$ with defining relations 
in degree two given by
$$
x_\mu x_\nu = \ell_{\mu \nu} x_\nu x_\mu + p_{\mu\nu} x_{\nu'} x_{\mu'} \quad \forall \mu,\nu \in \int
$$ 
(see below for the  notation used). The family of algebras $\alp$ have well-known sub-families: with special classes of the parameters  we can recover relevant algebras, notably Sklyanin algebras, or (the algebras of) $\theta$-planes and Connes--Dubois-Violette four-planes.

When all  parameters $p_{\mu \nu}$ vanish and all $\ell_{\mu \nu}$ are equal to $1$ the algebra $\alp$ becomes the commutative algebra of polynomials in four coordinates $x_0, \dots,x_3$ and we recover the classical case of a commutative four-plane. Thus we may think of $\alp$ as being 
$\alp =: \arlp$, that is the coordinate algebra of a noncommutative four-plane $\rlp$. 

The algebras $\alp$ can be given a finite-dimensional differential calculus. In \S \ref{sec:calculus} we construct a differential graded algebra $\omlp$, with $\alp$ as degree-zero part, together with a degree one operator $\dd$ (the differential) obeying the Leibniz rule. We show that  the calculus is finite of order four. One of its peculiarity is that 
in the top component $\omlp^4$, in addition to `usual' forms $\dx_\nu \dx_\mu \dx_\tau \dx_\sigma$ 
with indices all different, 
there are also `quantum' elements of the type
$\dx_\nu \dx_\mu \dx_\nu \dx_\mu$,  with $\nu \neq \mu$. Nevertheless, the space of 
four-forms turns out to be one-dimensional, with a volume form $\omega$ which we explicitly determine.

The center of the algebra $\alp$ is in general difficult to determine completely (and could be rather big for some parameters). We single out (in Proposition \ref{prop:R}) a condition on the parameters $\ell$ and $p$ under which certain degree-two elements of the type
$R_c:=\sum_{\mu=0}^3 \cc_\mu x_\mu^2$, 
belong to the center of  the algebra  $\alp$, for polynomial coefficients  $c_\mu \in \k[\ell_{\mu \nu}, p_{\mu \nu}]$.  One possibility is that all these coefficients are $c_\mu=1$ for all indices (in Corollary \ref{cor-central}) and using the corresponding central element $R:=\sum_{\mu=0}^3 x_\mu^2$ we can introduce a family of noncommutative algebras, $\aslp:= \alp / \langle \sum_{\mu=0}^3  x_\mu^2-1 \rangle$  describing the algebra of coordinate functions of quantum three-spheres $\slp$. The calculus $(\omlp, \dd)$ descends to a differential calculus on $\slp$.

In the last part of the paper we focus on the study of quantum groups of symmetries for the algebras $\alp$. We construct a bialgebra $\mlp$ with a coaction $\delta: \alp \to \mlp \ot \alp $ which endows $\alp$ with the structure of a left $\mlp$-comodule algebra. The bialgebra 
$\mlp$ is a quantum matrix algebra defined by quadratic relations among its noncommutative coordinate functions. 
The coaction $\delta$ is required to be compatible with the differential $\dd$, that is it extends to a coaction on the graded algebra $\omlp$. Thus, the differential calculus $(\omlp, \dd)$ on $\alp$ is covariant. In the classical limit 
(with all parameters $p_{\mu \nu}$ vanishing and all $\ell_{\mu \nu}= 1$), the bialgebra $\mlp$ reduces, as expected, to the commutative coordinate bialgebra of $4 \times 4$ matrices in ${\rm Mat}_4(\k)$. 
Future work will be devoted to the study of quotient algebras of the bialgebra $\mlp$ describing matrix quantum groups, and in particular a quantum group of orthogonal matrices acting on $\slp$.

\subsubsection*{Acknowledgments.} We thank Michel Dubois-Violette for many useful discussions and suggestions and Alessandro Logar for his extensive help with symbolic computations. Paul Smith made useful remarks via  email.

\section{The quadratic algebras $\alp$}\label{sec:alp}

We work over a field $\k$ of characteristic zero and denote  by $1$ its (multiplicative) unit.
Greek indices will run in $\{0,1,2,3\}$; latin indices run in $\{1,2,3\}$.
Having fixed two distinct indices $\mu, \nu \in \int$ with (say) $\mu>\nu$, we denote by $\mu',\nu'$ the (uniquely defined) indices $\mu',\nu' \in \int \backslash \{\mu,\nu\}$ with $\mu' >\nu'$.  When $\nu=\mu$, we define $\nu'=\mu'=\mu=\nu$.  Clearly $(\mu'',\nu'')= (\mu,\nu)$.  We sometimes write $(\mu,\nu)'$ to indicate $(\mu',\nu')$.
With this notation, in the following each identity which holds for $\mu, \nu$, will also hold by replacing $\mu \leftrightarrow \mu'$ and  
$\nu \leftrightarrow \nu'$, provided the replacement is done simultaneously.

\subsection{Generators and relations}
We study a family of quadratic algebras finitely presented in terms 
of generators and relations and depending on a set of parameters. 
\begin{defi}\label{def:lp}
For all $\mu,\nu \in \int$ let $\ell_{\mu \nu}$ and $p_{\mu \nu} \in \k$ satisfy the conditions
\begin{enumerate}[(a)]
\item\label{l} \quad
$\ell_{\mu \mu} =1$ ; \quad 
$\ell_{\mu \nu}= \ell_{\nu \mu}$ ; \quad $\ell_{\mu' \nu'}=\ell_{\mu \nu}$ ; 
\item \label{p} \quad
$p_{\mu \nu}= -p_{\nu \mu}$ ;
\item \label{lp} \quad
$\ell_{\mu \nu}^2+ p_{\mu \nu}  p_{\nu' \mu'}  =1 $. 
\end{enumerate}
We denote by $\alp$ the graded associative $\k$-algebra (with $\k$ as degree zero component) generated in degree one by algebra generators $x_\mu$, $\mu \in \int$ and defining relations 
in degree two given by
\beq\label{comm-rel-x}
x_\mu x_\nu = \ell_{\mu \nu} x_\nu x_\mu + p_{\mu\nu} x_{\nu'} x_{\mu'} \quad \forall \mu,\nu \in \int.
\eeq
\end{defi}
 
\noindent
Due to the above conditions on the parameters $\ell$'s and $p$'s, one easily verifies 
that there are no additional relations in degree two: when using the relations \eqref{comm-rel-x} 
in the right hand side (for the proper pairs of indices) one obtains an identity,
\begin{align*}
x_\mu x_\nu & = \ell_{\mu \nu} x_\nu x_\mu + p_{\mu\nu} x_{\nu'} x_{\mu'} =
\ell_{\mu \nu} (\ell_{\nu \mu} x_\mu x_\nu + p_{\nu\mu} x_{\mu'} x_{\nu'} ) + p_{\mu\nu}
(\ell_{\nu' \mu'} x_{\mu'} x_{\nu'} + p_{\nu'\mu'} x_{\mu} x_{\nu} )
\\
& = (\ell_{\mu \nu}^2 + p_{\mu\nu}p_{\nu'\mu'})x_\mu x_\nu +
\ell_{\mu \nu}(p_{\nu\mu}  + p_{\mu\nu}) x_{\mu'} x_{\nu'} = x_\mu x_\nu .
\end{align*}

\noindent
We stress that indeed it is enough to consider equations \eqref{comm-rel-x} for $\mu>\nu$; those for $\mu<\nu$  are then implied. Indeed, assume \eqref{comm-rel-x} holds for indices $\mu>\nu$ fixed, then
\begin{align*}
\ell_{\nu \mu} x_\mu x_\nu + p_{\nu\mu} x_{\mu'} x_{\nu'} & = 
\ell_{\nu \mu} (\ell_{\mu \nu} x_\nu x_\mu + p_{\mu\nu} x_{\nu'} x_{\mu'}) 
+ p_{\nu\mu} (\ell_{\mu' \nu'} x_{\nu'} x_{\mu'} + p_{\mu'\nu'} x_{\nu} x_{\mu}) 
\\
& =   (\ell_{\mu \nu}^2 + p_{\nu\mu} p_{\mu'\nu'})     x_\nu x_\mu
 + (\ell_{\nu \mu} p_{\mu\nu} 
+ p_{\nu\mu} \ell_{\mu \nu}) x_{\nu'} x_{\mu'}   =x_\nu x_\mu \; .
\end{align*}
Generically, for the family of quadratic algebras $\alp$ the number of independent parameters is six. There are six parameters 
$\ell_{j0}$ and $p_{j0}$, with $j=1,2,3$, and three parameters $p_{jk}$, with $j > k\in\{1,2,3\}$. 
They are related by the three conditions \eqref{lp} of Definition~\ref{def:lp}.

Explicitly the relevant commutation relations are
\begin{align} \label{exre}
x_1 x_0 = \ell_{1 0} x_0 x_1 + p_{10} x_2 x_3 \, , \qquad 
x_3 x_2 = \ell_{10} x_2 x_3 + p_{32} x_{0} x_{1} \, , \nn
\\
x_2 x_0 = \ell_{2 0} x_0 x_2 + p_{20} x_{1} x_3 \, , \qquad
x_3 x_1 = \ell_{20} x_1 x_3 + p_{31} x_{0} x_2 \, ,
\\
x_3 x_0 = \ell_{3 0} x_0 x_3 + p_{30} x_{1} x_2 \, , \qquad
x_2 x_1 = \ell_{30} x_1 x_2 + p_{21} x_{0} x_3 \, . \nn
\end{align}
One important aspect of the relations \eqref{comm-rel-x}, as  
 shown by their explicit form 
\eqref{exre} is that they are ordered so that the six ordered binomials $x_\mu x_\nu$, with $\mu < \nu$, together with the four binomials $x_\mu^2$, form a basis of degree-two polynomials. This fact will turn out to be useful later on.

\begin{rem}
The quadratic relations \eqref{comm-rel-x} that define the algebras $\alp$ can be expressed in the form
\beq\label{rmrel}
x_\mu x_\nu = \sum_{\sigma, \tau} \cR_{\mu \nu}^{~ ~\sigma \tau} x_\sigma x_\tau \, ,
\qquad \cR_{\mu \nu}^{~ ~\sigma \tau}:= \ell_{\mu \nu} \delta_{\mu \tau} \delta_{\nu \sigma} + p_{\mu \nu} \delta_{\mu' \tau} \delta_{\nu' \sigma} 
\eeq
for all pair of indices $(\mu, \nu)$. 
It is easy to see that the matrix $\cR \in {\rm Mat}_{4^2}(\k)$ is invertible and involutive, that is 
$\cR^2= \II \otimes \II$, for $\II$ the identity matrix; indeed one easily finds:
$$
\sum_{\alpha, \beta} \cR_{\mu \nu}^{~ ~\alpha \beta} \cR_{\alpha \beta}^{~ ~\sigma \tau}
= (\ell_{\mu \nu}^2 + p_{\mu \nu} p_{\nu' \mu'} )\delta_{\mu \sigma} \delta_{\nu \tau}  
=\delta_{\mu \sigma} \delta_{\nu \tau},
$$
using condition \eqref{lp} in the Definition~\ref{def:lp}. 
On the other hand, for generic parameters $\ell_{\mu \nu}$ and $p_{\mu \nu}$, the matrix $\cR$ does not satisfy the quantum Yang--Baxter equations. An analysis of these equations for our algebras $\alp$ will be reported elsewhere. 
\end{rem}

\subsection{Examples}\label{ex:class}
The family of algebras $\alp$ comprises a few interesting subfamilies.

\subsubsection{Extreme cases}
There are some `extreme' families.

\noindent
$\bullet$ \quad 
All $p_{\mu \nu}=0$. Conditions \eqref{lp} of Definition~\ref{def:lp} reduce to 
$\ell_{\mu \nu} = \pm 1$ and we have:
$$
x_\mu x_\nu = \pm x_\nu x_\mu .
$$
In particular, if all $\ell_{\mu \nu} = 1$, the algebra $\alp$ is the commutative 
$\k$-algebra in four-generators. 
We stress that $\ell_{\mu \nu} = \pm 1$ does not imply $p_{\mu \nu}=0$
but only that $p_{\mu \nu}  p_{\nu' \mu'}=0$.

\noindent  
$\bullet$ \quad
All $\ell_{\mu \nu}=1$, but $p_{\mu \nu}$ not all zero. As mentioned, condition $\ell_{\mu \nu}= 1$ for all $\mu,\nu$ does not force the vanishing of all $p_{\mu \nu}$. For conditions \eqref{lp} in Definition~\ref{def:lp} to be satisfied, it is enough that  $p_{\mu \nu}  p_{\nu' \mu'} =0$ for each pair of indices $\mu,\nu$. 
We shall mention examples of these occurrences in the next section \S \ref{sec:skly}
and later on in \S\ref{sec:cdv}.

\noindent
$\bullet$ \quad
All $\ell_{\mu \nu}=0$. For conditions \eqref{lp} of Definition~\ref{def:lp} one needs 
$p_{\mu \nu}$ and $p_{\nu' \mu'}$ different from zero with $p_{\nu' \mu'} = (p_{\mu \nu})^{-1}$. 
The relations \eqref{exre} for the corresponding algebra $\alp$ become:
\begin{align} \label{exrebis}
x_1 x_0 = p_{10} \, x_2 x_3 \, , \qquad 
x_3 x_2 = - (p_{10})^{-1} \, x_{0} x_{1} \, , \nn
\\
x_2 x_0 = p_{20} x_{1} x_3 \, , \qquad
x_3 x_1 =  - (p_{20})^{-1} \, x_{0} x_2 \, ,
\\
x_3 x_0 = p_{30} x_{1} x_2 \, , \qquad
x_2 x_1 = - (p_{30})^{-1} \, x_{0} x_3 \, . \nn
\end{align}

\subsubsection{Sklyanin algebras}\label{sec:skly}
Another important subfamily is made by the Sklyanin algebras that we shall now briefly describe.

Let $\mathrm{a}, \mathrm{b}, \mathrm{c} \in \k$ with $\mathrm{a},\mathrm{c} \neq 1$ 
and $\mathrm{b} \neq -1$. Define parameters 
\begin{align}\label{lp-skly}
\ell_{01} & :=\frac{1 + \mathrm{a}}{1-\mathrm{a}} \, , \quad 
p_{01}:= \frac{2 \mathrm{a}}{1-\mathrm{a}}= (\ell_{01}-1) \, , \quad
p_{23}:= \frac{2}{1-\mathrm{a}}=   (1+\ell_{01} ) 
\nn \\
\ell_{02} & :=\frac{1 - \mathrm{b}}{1+\mathrm{b}} \, , \quad
p_{02}:= \frac{2\mathrm{b}}{1+\mathrm{b}} = (1-\ell_{02}) \, , \quad
p_{13}:= \frac{-2}{1+\mathrm{b}}= -( 1+\ell_{02}) 
\nn \\
\ell_{03} & :=\frac{1 + \mathrm{c}}{1-\mathrm{c}} \, , \quad
p_{03}:= \frac{2 \mathrm{c}}{1-\mathrm{c}} = ( \ell_{03}-1) \, , \quad
p_{12}:= \frac{2}{1-\mathrm{c}}=(  1+\ell_{03}) \, ,
\end{align}
with $\ell_{\mu \nu}= \ell_{\nu \mu}=\ell_{\mu' \nu'}$,  $\ell_{\mu \mu} =1 $ and
$p_{\mu \nu}= -p_{\nu \mu}$.
Then it is easy to show that for all $\mu, \nu$, it holds that 
$\ell_{\mu \nu}^2+ p_{\mu \nu}  p_{\nu' \mu'}  =1$, that is condition \eqref{lp} in 
Definition~\ref{def:lp}.

The  family of algebras $\alp$ corresponding to this choice of the parameters  were introduced by Sklyanin in \cite{Skly} in the context of quantum Yang-Baxter equations and extensively studied in \cite{smith}.
In fact for a  proper Sklyanin algebra one needs the additional condition 
\beq\label{cond}
\mathrm{a} + \mathrm{b} + \mathrm{c} + \mathrm{a} \mathrm{b} \mathrm{c}=0 .
\eeq
This, or equivalently $(1+ \mathrm{a})(1+\mathrm{b})(1+\mathrm{c})
=(1- \mathrm{a})(1-\mathrm{b})(1-\mathrm{c})$, reads
\beq\label{cond-l}
\ell_{02}=\ell_{01}\ell_{03} , 
\eeq
thus giving an additional constraint on the $\ell$'s.
Originally, the algebra was introduced as the  quadratic algebra generated by degree one elements $\mathrm{x}_\mu$ with relations
\begin{align}\label{skly-k}
[\mathrm{x}_0, \mathrm{x}_1]_{-} =  \mathrm{a} [\mathrm{x}_2,\mathrm{x}_3]_{+} \, , \qquad &    [\mathrm{x}_2,\mathrm{x}_3]_{-}=[\mathrm{x}_0, \mathrm{x}_1]_{+}  \, ,
\nn \\
[\mathrm{x}_0, \mathrm{x}_2]_{-} =   \mathrm{b} [\mathrm{x}_3,\mathrm{x}_1]_{+}  \, , \qquad &   [\mathrm{x}_3,\mathrm{x}_1]_{-}= [\mathrm{x}_0, \mathrm{x}_2]_{+}  \, , 
\nn \\
[\mathrm{x}_0, \mathrm{x}_3]_{-} =   \mathrm{c} [\mathrm{x}_1,\mathrm{x}_2]_{+}  \, , \qquad &   [\mathrm{x}_1,\mathrm{x}_2]_{-} =[\mathrm{x}_0, \mathrm{x}_3]_{+} \, .
\end{align}
where $[~,~]_-$ and $[~,~]_+$ stand for the commutator and anticommutator respectively. 

\begin{rem}
When $\mathrm{a}=\mathrm{b}=\mathrm{c}=0$, one has $\ell_{01}=\ell_{02}=\ell_{03}=1$ and
$p_{01}=p_{02}=p_{03}=0$ while $p_{23}=p_{31}=p_{12}=2$. In the corresponding algebra the generator $\mathrm{x}_0$ is central: $[\mathrm{x}_0, \mathrm{x}_1]_{-}
=[\mathrm{x}_0, \mathrm{x}_2]_{-}=[\mathrm{x}_0, \mathrm{x}_3]_{-}=0, $ and the defining relations reduce to 
\begin{align}\label{skly-k00}
[\mathrm{x}_2,\mathrm{x}_3]_{-}= 2 \, \mathrm{x}_0 \mathrm{x}_1   \, , \qquad 
[\mathrm{x}_1,\mathrm{x}_3]_{-}= 2 \, \mathrm{x}_0 \mathrm{x}_2  \, , \qquad
[\mathrm{x}_1,\mathrm{x}_2]_{-} = 2 \, \mathrm{x}_0 \mathrm{x}_3 \, .
\end{align}
\end{rem}

\subsection{$*$-structures}\label{sec:*lp}
Let $\k=\IC$. The algebra $\alp$ is made into a $*$-algebra by taking the generators to be hermitian ones:  
\beq\label{*x}
*(x_\mu)=x_\mu \, .
\eeq
This requires that the deformation parameters obey the conditions
\beq\label{*lp}
\bar{\ell}_{\mu \nu}={\ell}_{\mu \nu} \, , \quad \mbox{and} 
\quad \overline{p}_{\mu \nu}= p_{\nu \mu} \, .
\eeq
Again we have important subfamilies that we describe next. 

\subsubsection{Connes--Dubois-Violette four-planes $\IR^4_\mathbf{u}$ }\label{sec:cdv}
Let $\k=\IC$.  In \cite[\S 2]{cdv}  the authors introduce a three-parameter family $\IR^4_\mathbf{u}$ of deformations of the
four-dimensional Euclidean space $\IR^4$ by solving some $K$-theoretic equations put forward in \cite{cl}.
The noncommutative four-plane $\IR^4_\mathbf{u}$ is the quantum space  dual  to an algebra $\Au=\IC_{alg}(\IR^4_\mathbf{u})$, with parameters $\mathbf{u}=(e^{i\varphi_1},e^{i\varphi_2},e^{i\varphi_3}) \in \IT^3$. The unital $*$-algebra  $\Au$  is generated by elements $z_\mu$, $z_\mu^*=*(z_\mu)$, $\mu=0,1,2,3$ satisfying the relations 
\begin{align}\label{rel-z}
&z_k z_0^*-z_0 z_k^* + \sum_{\mu, \nu}\epsilon_{klm} z_l z_m^* = 0 \, ,\qquad   
\nn
\\
&z_0^* z_k - z_k^* z_0 + \sum_{\mu, \nu} \epsilon_{klm} z_l^* z_m =0 \, ,\qquad  \forall k=1,2,3 ,
\end{align}
where $\epsilon_{klm}$ is completely antisymmetric in $k,l,m \in \{1,2,3\}$, with $\epsilon_{123}=1$, and
$$
\sum_{\mu=0}^3 \left( z_\mu z_\mu^* - z_\mu^* z_\mu \right) =0 .
$$ 
Moreover, the generators satisfy  
\beq\label{*z}
z_\mu^* = \sum_\nu \Lambda_{\mu \nu} z_\nu 
\eeq
 where $ \Lambda_{\mu \nu}$ are the entries of a symmetric unitary matrix 
\beq\label{mLambda}
\Lambda=\begin{pmatrix}
e^{-2i\varphi_0} & & &
\\
& e^{-2i\varphi_1} & &
\\
& & e^{-2i\varphi_2} &
\\ & & & e^{-2i\varphi_3}
\end{pmatrix}\; .
\eeq
Due to an overall symmetry, $z_\mu \mapsto \sum_\nu 
\rho S_{\mu \nu} z_\nu$, for $\rho \in U(1)$ and $S \in SO(4)$, one can further assume  one of the angles $\varphi_\mu$ to vanish, say $\varphi_0=0$, thus
$\Au$ with $\mathbf{u} \in \IT^3$.
Finally, by rescaling the generators $z_\mu$ the algebra $\Au=\IC_{alg}(\IR^4_\mathbf{u})$ admits hermitian generators as
$$
x_\mu:=e^{-i\varphi_\mu} z_\mu , \qquad *(x_\mu)=x_\mu^*=x_\mu .
$$ 

The algebras $\Au$ belong to the family of  quadratic algebras $\alp$ introduced in Definition \ref{def:lp} above, as we will now show. 

Recall that for fixed two indices $\mu, \nu \in \int$ with $\mu>\nu$, we denote by $\mu',\nu'$ the 
unique indices $\mu',\nu' \in \int \backslash \{\mu,\nu\}$ with $\mu' >\nu'$; one has $(-1)^{\mu' + \nu'} =(-1)^{\mu+ \nu}$. 
Let $\lambda_\mu:= e^{-2i\varphi_\mu}$ be the entries of the matrix $\Lambda$ in \eqref{mLambda}. 
For $\mu, \nu \in \int$ consider then 
\beq\label{abc}
a_{\mu \nu}:=\lambda_\mu \lambda_{\nu'} + \lambda_{\nu}\lambda_{\mu'}
\, , \quad
b_{\mu \nu}:=\lambda_\mu \lambda_{\mu'} + \lambda_{\nu}\lambda_{\nu'}
\, , \quad
c_{\mu \nu}:= (-1)^{\mu + \nu} (\lambda_{\mu'}^2 - \lambda_{\nu'}^2) \, .
\eeq
Clearly $c_{\mu \nu}$ is antisymmetric, while both $a_{\mu \nu}$ and $b_{\mu \nu}$ are symmetric with in addition 
$a_{\mu \nu}=a_{\mu' \nu'}$ and $b_{\mu \nu}=b_{\mu' \nu'}$. Some little algebra also shows that 
\begin{enumerate}[(i)]
\item\label{iv} \quad
$a_{\mu \nu}^2=b_{\mu \nu}^2 + c_{\mu \nu}c_{\nu' \mu'}$ ,
\item\label{v} \quad
$\lambda_{\nu} a_{\mu \nu} -\lambda_\mu b_{\mu \nu}
= (-1)^{\nu + \mu} \lambda_{\mu'} c_{\nu' \mu'}$ .
\end{enumerate}

\begin{prop}
The generators $z_\mu$ of the algebra $\Au$ satisfy the following relations
\beq\label{comm-rel-az}
a_{\mu \nu} z_\mu z_\nu = b_{\mu \nu} z_\nu z_\mu + c_{\mu\nu} z_{\nu'} z_{\mu'}, \quad \forall \mu,\nu \in \int.
\eeq
In particular, for $\{\lambda_{\mu}, \mu=0,\dots, 3\}$  such that $a_{\mu \nu} \neq 0$ 
for all $\mu, \nu$, equations \eqref{comm-rel-az} hold if and only if equations \eqref{rel-z} hold,  that is the generators $z_\mu$ satisfy the commutation relations \eqref{rel-z} if and only if they satisfy relations \eqref{comm-rel-az}.
\end{prop}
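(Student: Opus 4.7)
The plan is to use the relation $z_\mu^* = \lambda_\mu z_\mu$, immediate from \eqref{*z} and the diagonal form \eqref{mLambda} of $\Lambda$, to rewrite the defining relations \eqref{rel-z} purely in terms of the $z_\mu$'s and then recover \eqref{comm-rel-az} by suitable linear combinations. Substituting yields, for each $k \in \{1,2,3\}$ (writing $\{l,m\}=\{1,2,3\}\setminus\{k\}$ with $l<m$), the pair
\begin{align*}
\lambda_0 z_k z_0 - \lambda_k z_0 z_k + \lambda_m z_l z_m - \lambda_l z_m z_l &= 0,\\
\lambda_0 z_0 z_k - \lambda_k z_k z_0 + \lambda_l z_l z_m - \lambda_m z_m z_l &= 0,
\end{align*}
two linear relations in the four degree-two monomials $z_k z_0,\, z_0 z_k,\, z_l z_m,\, z_m z_l$. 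The remaining defining relation $\sum_\mu (z_\mu z_\mu^* - z_\mu^* z_\mu)=0$ collapses to $\sum_\mu \lambda_\mu [z_\mu,z_\mu]=0$, which is automatic.

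Next, I would take two independent linear combinations of this pair with $\lambda$-coefficients, each designed to eliminate one of the four monomials. The combination killing $z_m z_l$ (namely $\lambda_m$ times the first equation minus $\lambda_l$ times the second) produces exactly \eqref{comm-rel-az} for the pair $(\mu,\nu)=(k,0)$, once the surviving coefficients are matched against the definitions \eqref{abc}. A second independent combination, for instance $\lambda_k$ times the first plus $\lambda_0$ times the second, which eliminates $z_k z_0$, yields \eqref{comm-rel-az} for the complementary pair $(\mu,\nu)=(m,l)$, using the symmetries $a_{ml}=a_{k0}$, $b_{ml}=b_{k0}$ and the explicit form of $c_{ml}$. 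Letting $k$ range over $\{1,2,3\}$ produces all six nontrivial relations \eqref{comm-rel-az} with $\mu>\nu$; the cases $\mu<\nu$ follow from the simultaneous $(\mu,\nu)\leftrightarrow(\nu,\mu)$ replacement mentioned after Definition~\ref{def:lp}, while $\mu=\nu$ is trivial.

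For the converse implication one inverts the same linear combinations. The $2\times 2$ change-of-basis matrix passing from the pair of substituted equations to the two \eqref{comm-rel-az}-relations for $(k,0)$ and $(m,l)$ has determinant $\lambda_0\lambda_m + \lambda_k\lambda_l = a_{k0}$; under the hypothesis $a_{\mu\nu}\neq 0$ this matrix is invertible, so the two substituted equations can be recovered from \eqref{comm-rel-az}, and hence so are the defining relations \eqref{rel-z}.

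The main technical burden is the coefficient bookkeeping: verifying that the elimination produces precisely $a_{\mu\nu},\, b_{\mu\nu},\, c_{\mu\nu}$ as in \eqref{abc}. The verification is direct and is considerably streamlined by the second of the two identities recorded before the proposition, namely $\lambda_\nu a_{\mu\nu}-\lambda_\mu b_{\mu\nu}=(-1)^{\mu+\nu}\lambda_{\mu'}c_{\nu'\mu'}$, which encodes the compatibility between the $\lambda$-weights used in the combinations and the quantities $a,\, b,\, c$. In practice I would carry out the case $k=1$ in full and then invoke the evident permutation symmetry of the construction to handle $k=2,3$.
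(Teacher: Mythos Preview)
Your approach is essentially the paper's: substitute $z_\mu^*=\lambda_\mu z_\mu$ into \eqref{rel-z} and take $\lambda$-weighted linear combinations to isolate one monomial. The paper organises this with the prime notation (equations \eqref{eq1bis}--\eqref{eq2bis}) and combines $\lambda_{\nu'}\cdot\eqref{eq1bis}-\lambda_{\mu'}\cdot\eqref{eq2bis}$, which is exactly your ``$\lambda_m$ times the first minus $\lambda_l$ times the second'' in the $(k,0)$ case.

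One small caveat: your displayed pair of substituted equations silently assumes $\epsilon_{klm}=+1$; with the convention $l<m$ this fails for $k=2$, where a global sign appears on the $z_lz_m$ and $z_mz_l$ terms. Your stated plan to do $k=1$ in full and then appeal to symmetry avoids the issue, but be aware that the formulas as written are not uniform in $k$.

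For the converse, your argument is a mild streamlining of the paper's. The paper recovers \eqref{eq1bis} and \eqref{eq2bis} from \eqref{comm-rel-az} by a direct algebraic manipulation using identities (iv) and (v); you instead observe that the $2\times 2$ change-of-basis matrix has determinant $a_{k0}$ and is therefore invertible under the hypothesis. Both routes use the same hypothesis in the same place, but yours makes the role of $a_{\mu\nu}\neq 0$ more transparent.
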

\begin{proof} 
Firstly note that for all pairs of indices $\mu,\nu \in \int$, by using the notation $\mu ',\nu '$  
equations \eqref{rel-z} can be rewritten respectively as
\begin{align}
& z_\nu z^*_\mu    +(-1)^{\mu' + \nu'}  z_{\nu'} z^*_{\mu'} = z_\mu z^*_\nu + (-1)^{\mu' + \nu'} z_{\mu'} z^*_{\nu'} 
\label{eq1*}
\\
& z^*_\mu z_\nu - (-1)^{\mu' + \nu'} z^*_{\mu'} z_{\nu'} =  z^*_\nu z_\mu  -  (-1)^{\mu' + \nu'} z^*_{\nu'} z_{\mu'}
\label{eq2*}
\end{align}
or, using \eqref{*z}, as
\begin{align}
&\lambda_\mu z_\nu z_\mu - \lambda_\nu z_\mu z_\nu = (-1)^{\mu' + \nu'} \left( \lambda_{\nu'} z_{\mu'} z_{\nu'} - \lambda_{\mu'} z_{\nu'} z_{\mu'} \right)
\label{eq1bis}
\\
&\lambda_\mu z_\mu z_\nu - \lambda_\nu z_\nu z_\mu = (-1)^{\mu' + \nu'} \left( \lambda_{\mu'} z_{\mu'} z_{\nu'} - \lambda_{\nu'} z_{\nu'} z_{\mu'}\right) .
\label{eq2bis}
\end{align}
Since the $\lambda_\mu$ never vanish, one can take the difference of $\lambda_{\nu'}$ times equation \eqref{eq1bis} with $\lambda_{\mu'}$ times equation \eqref{eq2bis}, thus obtaining 
$$
(\lambda_\mu \lambda_{\nu'} + \lambda_\nu \lambda_{\mu'})z_\nu z_\mu
 - (\lambda_\nu \lambda_{\nu'} + \lambda_{\mu}\lambda_{\mu'} )z_\mu z_\nu 
= (-1)^{\mu' + \nu'} 
(\lambda_{\nu'}^2 - \lambda_{\mu'}^2) z_{\mu'} z_{\nu'} 
$$
that is $a_{\nu \mu} z_\nu z_\mu = b_{\nu \mu} z_\mu z_\nu + c_{\nu\mu} z_{\mu'} z_{\nu'}.
$

\noindent
Conversely, suppose that \eqref{comm-rel-az} hold; then, by using the symmetry properties of the parameters $a_{\mu \nu}$,  $b_{\mu \nu}$ and $c_{\mu \nu}$ given above and in particular \eqref{iv}, \eqref{v}, we obtain \eqref{eq1bis}:
\begin{align*}
\lambda_\mu a_{\nu \mu} z_\nu z_\mu & =  
\lambda_\mu 
b_{\nu \mu} z_\mu z_\nu + \lambda_\mu  c_{\nu \mu} z_{\mu'} z_{\nu'}
\\
& =  
\left(  \lambda_\nu a_{\mu \nu} - (-1)^{\mu + \nu} \lambda_{\mu'} c_{\nu' \mu'}
 \right) z_\mu z_\nu +
 (-1)^{\mu' + \nu'}\left(
\lambda_{\nu'} a_{\mu' \nu'}  - \lambda_{\mu'} b_{\mu' \nu'} 
\right)z_{\mu'} z_{\nu'}
\\
&=  \lambda_\nu a_{\mu \nu} z_\mu z_\nu +
 (-1)^{\mu' + \nu'}
\lambda_{\nu'} a_{\mu \nu} z_{\mu'} z_{\nu'} - (-1)^{\mu + \nu} \lambda_{\mu'} ( c_{\nu' \mu'}
 z_\mu z_\nu  + b_{\mu \nu} z_{\mu'} z_{\nu'})
\\
&= \lambda_\nu a_{\mu \nu} z_\mu z_\nu +
 (-1)^{\mu' + \nu'}
\lambda_{\nu'} a_{\mu \nu} z_{\mu'} z_{\nu'} - (-1)^{\mu + \nu} \lambda_{\mu'} a_{\mu \nu} z_{\nu'} z_{\mu'})
\end{align*} 
that is, up to multiplication by $a_{\mu \nu} \neq 0$, just \eqref{eq1bis}.
With an analogous procedure, starting from $\lambda_\mu a_{\mu \nu} z_\mu z_\nu$, we obtain \eqref{eq2bis} out of \eqref{comm-rel-az}.
\end{proof} 
  
Hence we have that the defining commutation relations for the generators of $\Au$ can equivalently be  rewritten as in \eqref{comm-rel-az} (for $a_{\mu \nu} \neq 0$).
In order to describe  the algebras $\Au$ as $\alp$ algebras, we need the following additional notation.

Let $\{\lambda_{\mu}, \mu=0,\dots, 3\}$ be 
such that $a_{\mu \nu} \neq 0$ for all $\mu, \nu$. Define
\beq\label{def:l-p}
\ell_{\mu \nu}:= 
a_{\mu \nu}^{-1} b_{\mu \nu} 
\quad ; \quad
q_{\mu \nu}:= 
a_{\mu \nu}^{-1} c_{\mu \nu} 
\eeq
Then the following identities hold
\begin{enumerate}[\rm(i)]\setcounter{enumi}{2}
\item\label{a} \quad
$\ell_{\mu \nu}= \ell_{\nu \mu}=\ell_{\mu' \nu'}=\ell_{\nu' \mu'}$ ; 
\quad $\ell_{\mu \nu'}=\ell_{\mu' \nu}$ ; 
\quad $q_{\mu \nu}= -q_{\nu \mu}$ ,
\item \label{b} \quad
$\ell_{\mu \nu}^2+ q_{\mu \nu}  q_{\nu' \mu'} = 1$ ,
\item\label{c} \quad
$\lambda_{\mu} \ell_{\mu \nu} =\lambda_\nu + (-1)^{\mu + \nu} \lambda_{\mu'} q_{\mu' \nu'}$ .
\end{enumerate}
The above are easily obtained by using the symmetry properties of the parameters 
$a_{\mu \nu}$,  $b_{\mu \nu}$ and $c_{\mu \nu}$ and the relations \eqref{iv}, \eqref{v}. 
Thus for parameters $\lambda$'s for which $a_{\mu \nu} \neq 0$, the relations \eqref{comm-rel-az} are equivalent to 
\beq\label{comm-rel-z}
z_\mu z_\nu = \ell_{\mu \nu} z_\nu z_\mu + q_{\mu\nu} z_{\nu'} z_{\mu'}, \quad \forall \mu,\nu \in \int.
\eeq

If we consider the real generators $x_\mu=e^{-i\varphi_\mu} z_\mu$, 
the commutation relations \eqref{comm-rel-z} read 
\beq\label{cdv-x}
x_\mu x_\nu = \ell_{\mu \nu} x_\nu x_\mu + p_{\mu\nu} x_{\nu'} x_{\mu'}\quad \forall \mu,\nu \in \int
\eeq
where 
$$
p_{\mu\nu}  = q_{\mu\nu}  e^{i(-\varphi_\mu -\varphi_\nu + \varphi_{\mu'} +\varphi_{\nu'})}.
$$
From properties \eqref{a}-\eqref{c} above, 
the parameters $\ell_{\mu\nu}$ and $p_{\mu\nu}$ satisfy the conditions in 
Definition~\ref{def:lp}. Moreover, from $a_{\mu \nu} \bar{b}_{\mu \nu} = \bar{a}_{\mu \nu} b_{\mu \nu}$ it follows that $\bar \ell_{\mu \nu}= \ell_{\mu \nu}$. And also, being 
$a_{\mu \nu} \bar{c}_{\mu \nu} \lambda_{\mu'} \lambda_{\nu'}= \bar{a}_{\mu \nu} c_{\nu \mu}  \lambda_\mu  \lambda_\nu $, one has that 
$\bar q_{\mu \nu} \bar \lambda_\mu  \bar \lambda_\nu \lambda_{\mu'} \lambda_{\nu'} = q_{\nu \mu}$ and in turn $\bar p_{\mu \nu}=p_{\nu \mu}$. Thus conditions \eqref{*lp} are satisfied and the corresponding algebra $\alp$ is a $*$-algebra.

While equations \eqref{comm-rel-z} (or \eqref{cdv-x}) are equivalent to  \eqref{rel-z}, they
are easier to handle, at least for the purposes of the present paper. In particular, if we use the lexicographic order $x_0<x_1<x_2<x_3$ for the generators $x_\mu$ of $\Au$, we can use  \eqref{cdv-x} to rewrite elements of $\Au$ in terms of ordered monomials which are independent and thus can be compared.

\begin{rem}\label{rem:no-abc} 

Thus, for those $\{\lambda_{\mu}, \mu=0,\dots, 3\}$ (or $u \in \IT^3$ taking $\varphi_0=0$) such that  $a_{\mu \nu}$ are non zero, the relations \eqref{comm-rel-z} or \eqref{cdv-x} give a different parametrization of the $*$-algebra $\Au$ of the noncommutative four-plane $\IR^4_\mathbf{u}$. 
Let us have a closer look at the number of actual parameters $\ell$ and $p$ entering the construction for these algebras $\Au$.
Being $a_{\mu \nu}$ and $b_{\mu \nu}$ symmetric with 
$a_{\mu \nu}=a_{\mu' \nu'}$ and $b_{\mu \nu}=b_{\mu' \nu'}$ (see after \eqref{abc}), 
\textit{a priori} there are only $a_{01}$, $a_{02}$, $a_{03}$ and $b_{01}$, $b_{02}$, $b_{03}$  (say) which are distinct. 
A further direct computation shows that
\beq\label{ab}
a_{01}=a_{02} \quad ; \quad b_{01}=a_{03} \quad; \quad b_{02}=b_{03} \, .
\eeq
This also says that 
$a_{\mu \nu}\neq 0$ is just $a_{01}, a_{03} \neq 0$.  
Moreover for each fixed  $\mu$, $\sum_\nu c_{\nu \mu}=0$, so we may select
$c_{01}, c_{02}, c_{12}$  (say) with 
\beq\label{cc}
c_{23}= c_{02}+c_{12} 
\quad ; \quad c_{13}= c_{01}-c_{12} 
\quad ; \quad c_{03}= - c_{02}-c_{01} \, .
\eeq
Next, 
by \eqref{a} above, there are only three $\ell_{\mu \nu}$, 
with 
$\ell_{02}=\ell_{01}\ell_{03}$ out of \eqref{ab}.
Finally thanks to \eqref{cc} and \eqref{ab}, the  six parameters 
$q_{\mu \nu}$ are not all independent:
\beq
q_{23}= q_{02}+q_{12}\ell_{01}
\quad ; \quad 
q_{13}= q_{01}-q_{12} \ell_{01}
\quad ; \quad 
q_{02}= - q_{01}-q_{03} \ell_{01} .
\eeq
Summarizing we are left with 5 parameters 
$$
\ell_{01 }\quad ; \quad \ell_{03}\quad ; \quad q_{01} \quad ; \quad q_{03} 
\quad ; \quad q_{12} 
$$
subject to two conditions
 (obtained by taking quotients of \eqref{cc} by suitable $a_{\mu\nu}$)
\beq
\ell_{03}^2= 1 + q_{03}q_{12} \quad ; \quad 
\ell_{01}^2 + q_{01}^2 + \ell_{01}q_{01} (q_{03}-q_{12})=1 \, .
\eeq
\end{rem}
 
\subsubsection{Example}\label{ex=cdv}  
Take the three angles in $\mathbf{u} \in \IT^3$ to be equal. With notations 
as in \eqref{mLambda} we have that $\lambda_0=1$, $\lambda_1=\lambda_2=\lambda_3=:\lambda \neq \pm 1$. Then all $\ell_{\mu \nu}=1$, while $q_{01}=q_{02}=q_{03}=0$ but the 
$q_{23}, ~q_{13}, ~ q_{12}$ are non zero, and are each proportional to 
$\lambda^{-1}(1-\lambda)$.
The generator $z_0$ is central: $[z_0, z_1]_{-}
=[z_0, z_2]_{-}=[z_0, z_3]_{-}=0, $ while the remaining relations become
\begin{align}\label{cdv-00}
& [z_2,z_3]_{-}= -\lambda^{-1}(1-\lambda) \, z_0 z_1   \, ,  \nn \\
& [z_1,z_3]_{-}= -\lambda^{-1}(1-\lambda) \, z_0 z_2  \, ,  \nn \\
& [z_1,z_2]_{-} = -\lambda^{-1}(1-\lambda) \, z_0 z_3 \, .
\end{align}
This is in analogy with the case in \eqref{skly-k00} for the Sklyanin algebra. 

\subsubsection{$\theta$-deformations}\label{ex:cl} 
The algebra of polynomial functions on the noncommutative four-plane $\IR^4_\theta$
defined in \cite{cl} corresponds to angles $\varphi_0 = \varphi_3 = 0$ and  
$\varphi_1 = \varphi_2  = - \theta /2$ in \eqref{mLambda}. With complex coordinates $\zeta_1 = x_0 + \ii x_3$ 
and $\zeta_2 = x_1 + \ii x_2$ it has commutation relations:
$$
\zeta_1 \zeta_2 = \lambda \zeta_2 \zeta_1 \, , \quad \zeta_1 \zeta_2^* 
= \bar{\lambda} \zeta_2^* \zeta_1  \, , \quad \zeta_1 \zeta_1^* = \zeta_1^* \zeta_1 
\, , \quad \zeta_2 \zeta_2^* = \zeta_2^* \zeta_2 \, ,
$$
together with the conjugated ones, with parameter $\lambda:=e^{i \theta}$. 
These can be written in the form of the present paper, that is as in \eqref{cdv-x} for appropriate parameters $\ell$'s and $p$'s. A direct computation yields:
$\lambda_0=\lambda_3=1$ and $\lambda_1=\lambda_2=e^{i \theta}=\lambda$. In turn:
\begin{align*}
&\ell_{01}=\ell_{02}=  \frac{2 \lambda}{1 + \lambda^2} \, , & \ell_{03}=1 \, ,
\\ 
&p_{01}=p_{13}=-p_{02}=-p_{23}= \frac{1- \lambda^2}{1 + \lambda^2} \, , &p_{03}=p_{12}=0 \, ,
\end{align*}
which requires to take $\theta \neq \frac{\pi}{2}$. 

\subsubsection{Skylanin algebras for  $\k=\IC$}\label{sec:skly-C}
Originally Sklyanin algebras were considered for the case $\k=\IC$.  
Let us return to the family of algebras addressed in \S\ref{sec:skly} above and consider the case $\k=\IC$. If we set $\mathrm{x}_\mu^*=\mathrm{x}_\mu$, for each $\mu=0,1,2,3$, this would not define a $*$-structure: the commutation relations \eqref{skly-k}, in particular those in the right column,  would not be preserved regardless of the choice of $\alpha, \beta, \gamma$. 
This problem can be overcome and a $*$-structure, compatible with the algebra structure, can be introduced  by taking the generator $\mathrm{x}_0$ to be anti-hermitian, that is 
$\mathrm{x}_0^*= -\mathrm{x}_0$. By renaming the generators as 
\beq
{x}_0:= - \ii \mathrm{x}_0 \; , \; {x}_k=: \mathrm{x}_k \; \mbox{ for } k=1,2,3
\eeq  
the relations \eqref{skly-k} can be rewritten in terms of the $x_\mu$ as (cf. \cite[eq.~(32)]{Skly})
\begin{align}\label{skly-C}
[x_0, x_1]_{-} = \ii\alpha [x_2,x_3]_{+} \, , \qquad 
&[x_2,x_3]_{-}  =\ii  [x_0, x_1]_{+} \, ,
\nn \\
[x_0, x_2]_{-} =\ii \beta [x_3,x_1]_{+}  \, , \qquad
& [x_3,x_1]_{-}=\ii   [x_0, x_2]_{+}  \, , 
\nn \\
[x_0, x_3]_{-} = \ii\gamma [x_1,x_2]_{+}  \, , \qquad 
&  [x_1,x_2]_{-}=\ii  [x_0, x_3]_{+} \, 
\end{align}
with $\alpha:=-\mathrm{a}$, $\beta:=-\mathrm{b}$ and $\gamma:=-\mathrm{c}$ still satisfying condition \eqref{cond}: 
\beq\label{cond-C}\alpha + \beta + \gamma +\alpha  \beta  \gamma =0 \, .
\eeq

With this different choice of generators $x_\mu$ for the Sklyanin algebra, the parameters 
$\ell$ and $p$, analogous to those for $\mathrm{x}_\mu$ in \eqref{lp-skly}, are computed to be  
\begin{align}\label{lp-skly-C}
\ell_{01} & :=\frac{1 - \alpha}{1+\alpha} \, , \quad 
p_{01}:= \frac{2 \ii \alpha}{1+\alpha}= \ii( 1-\ell_{01}) \, , \quad
p_{23}:= \frac{2\ii}{1+\alpha}=  \ii (1+\ell_{01} ) 
\nn \\
\ell_{02} & :=\frac{1 + \beta}{1-\beta} \, , \quad
p_{02}:= \frac{2\ii \beta}{1-\beta} = \ii(\ell_{02}-1) \, , \quad
p_{13}:= \frac{-2\ii}{1-\beta}= -\ii( 1+\ell_{02}) 
\nn \\
\ell_{03} & :=\frac{1 - \gamma}{1+\gamma} \, , \quad
p_{03}:= \frac{2 \ii\gamma}{1+\gamma} = \ii(1- \ell_{03}) \, , \quad
p_{12}:= \frac{2\ii}{1+\gamma}=\ii(  1+\ell_{03}) \, ,
\end{align}
which now require $\alpha,\gamma \neq - 1$ and $\beta \neq 1$. 
These new parameters $\ell$ and $p$ still satisfy the three conditions  in 
Definition~\ref{def:lp}, as well as the constraint \eqref{cond-l}.

Taking $\alpha, \beta, \gamma$ to be real (in accordance with the original choice of \cite{Skly}),  
one sees that $\bar \ell_{\mu \nu}= \ell_{\mu \nu}$ for all $\mu,\nu$ and thus that 
$\bar p_{\mu \nu} =-p_{\mu \nu}= p_{\nu \mu}$. Conditions \eqref{*lp} are hence satisfied and 
the choice $x_\mu^*=x_\mu$, for all $\mu$, defines a well-defined $*$-structure on the algebra $\alp$ corresponding to the Sklyanin algebra. 

\begin{rem}  There is quite an overlap between the family of Sklyanin algebras and  
that of Connes--Dubois-Violette algebras described in \S\ref{sec:cdv}. 
For `generic' values of the deformation parameters $\bf u$, both families depend only 
on two parameters \cite[\S 3]{cdv} (cf. also \cite{cdv2}). The $\theta$-deformations of \S\ref{ex:cl} 
are not Sklyanin algebras.
\end{rem}

\section{The exterior algebra of $\alp$}\label{sec:calculus}

There is a natural calculus on the quadratic algebras $\alp$. This section is dedicated to the construction of the Grassmann algebra $(\omlp,\dd)$ of $\alp$.

\subsection{Differential calculus}
We denote by $\omlp=\omlp(\alp)$ the 
unital associative graded $\k$-algebra  $\omlp:=\oplus_{n\in \mathbb{N}} \omlp^n$
 generated by  elements $x_\mu$, $\mu \in \int$, of degree $0$  satisfying relations \eqref{comm-rel-x} and by 
elements $\dd x_\mu$, $\mu \in \int$, of degree $1$ satisfying relations
\beq\label{one-forms}
\dd x_\mu x_\nu = \ell_{\mu \nu} x_\nu \dd x_\mu + p_{\mu\nu} x_{\nu'} \dd x_{\mu'} 
\, ,\quad  
x_\mu \dd x_\nu = \ell_{\mu \nu} \dd x_\nu x_\mu + p_{\mu\nu} \dd x_{\nu'} x_{\mu'} 
\eeq
and
\beq\label{two-forms}
\dd x_\mu \dd x_\nu = - \ell_{\mu \nu} \dd x_\nu \dd x_\mu - p_{\mu\nu} \dd x_{\nu'} \dd x_{\mu'} 
\eeq
for all $\mu,\nu \in \int$. From the properties of the parameters $\ell_{\mu \nu}$, $p_{\mu \nu} $, from \eqref{two-forms} 
one has that $\dd x_\mu \dd x_\mu=0$ for each $\mu$. Also, conditions 
\eqref{one-forms} are consistent in that by substituting the second one in the first one or vice-versa one gets an identity. The same consideration applies to \eqref{two-forms}:  when reusing \eqref{two-forms} in the right hand side it yields an identity.

\begin{rem}
When writing the defining relation \eqref{comm-rel-x} via an $\cR$-matrix as in \eqref{rmrel}, the relations for the forms in \eqref{one-forms} and \eqref{two-forms} can be written as 
\begin{align}\label{rmrelcal}
\dd x_\mu x_\nu &= \sum_{\sigma, \tau} \cR_{\mu \nu}^{~ ~\sigma \tau} x_\sigma \dd x_\tau \, ,  \quad 
x_\mu \dd x_\nu = \sum_{\sigma, \tau} \cR_{\mu \nu}^{~ ~\sigma \tau} \dd x_\sigma x_\tau \, , \\
\dd x_\mu \dd x_\nu & = - \sum_{\sigma, \tau} \cR_{\mu \nu}^{~ ~\sigma \tau} \dd x_\sigma \dd x_\tau \, .
\end{align}
\end{rem}

Next we define the linear operator  $\dd:\alp \to \omlp^1$, by $x_\mu \mapsto \dd x_\mu$ and extend it to  a differential $\dd$ on $\omlp$ by imposing that $\dd^2=0$ and that it satisfies a graded Leibniz rule. From this rule it also follows that each space $\omlp^n$ is an $\alp$-bimodule.

For $\k=\IC$ we further require $\omlp$ to be a $*$-algebra with $*(\dd x_\mu)=\dd (x_\mu^*)=\dd x_\mu$.

\subsection{Higher order forms}
Let us analyse the structure of higher order forms.  Firstly, we observe that relation \eqref{two-forms} follows by any-one of the relations in \eqref{one-forms} by applying $\dd$ and using the graded Leibniz rule. 
Next, by multiplying the relation \eqref{two-forms} on the left or the right by one-forms 
(and using $\dd x_\mu \dd x_\mu=0$), in degree 3 we have several identities.

\subsubsection{Three-forms}
We fix indices $\mu,\nu$. 
By multiplying \eqref{two-forms}  on the left and on the right by all possible one forms, $\dd x_{\mu}$,  $\dd x_{\nu}$
$\dd x_{\mu'}$ and  $\dd x_{\nu'}$, we obtain all the identities that  three-forms have to satisfy. 
The eight equations we obtain are respectively
\begin{align}
&0 = - \ell_{\mu \nu}  \dx_\mu \dx_\nu \dx_\mu - p_{\mu\nu}  \dx_\mu\dx_{\nu'} \dx_{\mu'} 
\label{l1}
\\
&\dx_\nu \dx_\mu \dx_\nu =  - p_{\mu\nu} \dx_\nu \dx_{\nu'} \dx_{\mu'}  \label{l2}
\\
 &\dx_{\mu'} \dx_\mu \dx_\nu = - \ell_{\mu \nu} \dx_{\mu'}  \dx_\nu \dx_\mu - p_{\mu\nu} \dx_{\mu'}  \dx_{\nu'} \dx_{\mu'}  \label{l3}
\\
&\dx_{\nu'} \dx_\mu \dx_\nu = - \ell_{\mu \nu} \dx_{\nu'}  \dx_\nu \dx_\mu  \label{l4}
\end{align}
together with 
\begin{align}
&\dx_\mu \dx_\nu \dx_\mu=  - p_{\mu\nu} \dx_{\nu'} \dx_{\mu'}  \dx_\mu \label{r1}
\\
&0 = - \ell_{\mu \nu} \dx_\nu \dx_\mu \dx_\nu - p_{\mu\nu} \dx_{\nu'} \dx_{\mu'}  \dx_\nu
\label{r2}
\\
&\dx_\mu \dx_\nu \dx_{\mu'} = - \ell_{\mu \nu} \dx_\nu \dx_\mu \dx_{\mu'} \label{r3}
\\
&\dx_\mu \dx_\nu \dx_{\nu'} = - \ell_{\mu \nu} \dx_\nu \dx_\mu \dx_{\nu'} - p_{\mu\nu} \dx_{\nu'} \dx_{\mu'} \dx_{\nu'} \label{r4}
\end{align}

\medskip
\begin{lem}
Suppose that for each pair of indices $(\mu,\nu)$, $\mu,\nu=0,1,2,3$ identities \eqref{l2}, \eqref{l4}, \eqref{r1} and
\eqref{r3} hold, then the remaining identities follow.
\end{lem}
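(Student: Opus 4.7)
The plan is to derive each of the four remaining identities from the four hypotheses \eqref{l2}, \eqref{l4}, \eqref{r1}, \eqref{r3} together with the 2-form relation~\eqref{two-forms}. The key observation is that all eight of the listed 3-form identities are produced by a single algebraic procedure: left- or right-multiplying~\eqref{two-forms} by each of the four one-form generators $\dx_\mu,\dx_\nu,\dx_{\mu'},\dx_{\nu'}$. Under this correspondence, the four hypothesized identities come from left-multiplication by $\dx_\nu,\dx_{\nu'}$ and right-multiplication by $\dx_\mu,\dx_{\mu'}$ respectively, while the four remaining ones arise from the complementary multiplications.

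Concretely, \eqref{l1} is obtained by left-multiplying~\eqref{two-forms} by $\dx_\mu$: the LHS $\dx_\mu^2\dx_\nu$ vanishes---since $\dx_\mu^2=0$ itself follows from~\eqref{two-forms} specialized to $\mu=\nu$ together with $\ell_{\mu\mu}=1$ and $p_{\mu\mu}=0$---and the RHS is precisely the LHS of \eqref{l1}. Analogously, \eqref{l3} comes from left-multiplication by $\dx_{\mu'}$ (no simplification needed), \eqref{r2} from right-multiplication by $\dx_\nu$ (invoking $\dx_\nu^2=0$), and \eqref{r4} from right-multiplication by $\dx_{\nu'}$. Thus each of the four remaining identities is the ``companion'' of one of the four hypotheses under the same single multiplication procedure.

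One can alternatively derive each remaining identity directly from the hypotheses by a short rewrite: for \eqref{l1}, substituting \eqref{r1} for $\dx_\mu\dx_\nu\dx_\mu$ and \eqref{l4} applied at the pair $(\nu',\mu')$ for $\dx_\mu\dx_{\nu'}\dx_{\mu'}$ reduces the claim to the anti-symmetry $\dx_\mu\dx_{\mu'}\dx_{\nu'}+\dx_{\nu'}\dx_{\mu'}\dx_\mu=0$, which is then established from~\eqref{two-forms} at $(\mu',\nu')$ combined with \eqref{r1} and the normalization $\ell_{\mu\nu}^2+p_{\mu\nu}p_{\nu'\mu'}=1$ of Definition~\ref{def:lp} (the degenerate parameter values $\ell_{\mu\nu}^2=1$ are handled separately using $p_{\mu\nu}p_{\nu'\mu'}=0$). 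The main obstacle throughout is not computational but combinatorial: one must keep careful track of the involution $(\mu,\nu)\mapsto(\mu',\nu')$ when reusing the hypotheses at shifted pairs, exploiting $\ell_{\mu'\nu'}=\ell_{\mu\nu}$ from property (a) of Definition~\ref{def:lp}. Beyond this bookkeeping, each derivation reduces to a one-line algebraic manipulation.
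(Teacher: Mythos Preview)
Your main argument does not prove the lemma as stated. The content of the lemma is that the four degree-three relations \eqref{l2}, \eqref{l4}, \eqref{r1}, \eqref{r3} (ranging over all index pairs) \emph{by themselves} imply \eqref{l1}, \eqref{l3}, \eqref{r2}, \eqref{r4}. Your first argument simply re-derives the remaining four identities by multiplying the degree-two relation \eqref{two-forms} by the complementary one-forms --- but this is exactly the procedure by which all eight identities were produced in the paragraph preceding the lemma. You are therefore using a strictly stronger hypothesis than what is assumed, and no reduction from eight to four is exhibited. Your alternative paragraph is closer in spirit, but it too falls back on \eqref{two-forms} to establish the anti-symmetry $\dx_\mu\dx_{\mu'}\dx_{\nu'}+\dx_{\nu'}\dx_{\mu'}\dx_\mu=0$, and that route is in any case more circuitous than needed.

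The paper's proof avoids \eqref{two-forms} entirely. For each of the four target identities it applies one or two of the hypothesized identities at a relabeled index pair, using only the parameter conditions of Definition~\ref{def:lp}. For instance, \eqref{l1} falls out in two steps: \eqref{l2} at the pair $(\nu,\mu)$ gives $\dx_\mu\dx_\nu\dx_\mu = p_{\mu\nu}\,\dx_\mu\dx_{\mu'}\dx_{\nu'}$, and then \eqref{l4} (suitably relabeled) gives $\ell_{\mu\nu}\,\dx_\mu\dx_{\mu'}\dx_{\nu'} = -\dx_\mu\dx_{\nu'}\dx_{\mu'}$, whence $-\ell_{\mu\nu}\dx_\mu\dx_\nu\dx_\mu - p_{\mu\nu}\dx_\mu\dx_{\nu'}\dx_{\mu'}=0$. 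Identity \eqref{l3} is obtained from \eqref{l4} and \eqref{l2} together with the normalization $\ell_{\mu\nu}^2+p_{\mu\nu}p_{\nu'\mu'}=1$; \eqref{r2} and \eqref{r4} follow by mirror arguments using \eqref{r1} and \eqref{r3}. No appeal to the two-form relation is needed.
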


\begin{proof}
First, by using \eqref{l2} and then \eqref{l4} we get \eqref{l1}:
$$
\ell_{\mu \nu}  \dx_\mu \dx_\nu \dx_\mu = \ell_{\mu \nu} p_{\mu\nu} \dx_\mu \dx_{\mu'} \dx_{\nu'} = - p_{\mu\nu}  \dx_\mu\dx_{\nu'} \dx_{\mu'}. 
$$
Next,  by using first \eqref{l4}, property \eqref{lp}, and next \eqref{l2} we get \eqref{l3}:
\begin{align*}
- \ell_{\mu \nu} \dx_{\mu'}  \dx_\nu \dx_\mu &=  \ell_{\mu \nu}^2 \dx_{\mu'}  \dx_\mu \dx_\nu = (1+ p_{\mu \nu} p_{\mu' \nu'}) \dx_{\mu'}  \dx_\mu \dx_\nu \\
&= \dx_{\mu'}  \dx_\mu \dx_\nu  + p_{\mu \nu} \dx_{\mu'}  \dx_{\nu'} \dx_{\mu'} .
\end{align*}
Similarly, by using \eqref{r1} and then \eqref{r3} we promptly obtain \eqref{r2}:
$$
\ell_{\mu \nu}  \dx_\nu \dx_\mu \dx_\nu = \ell_{\mu \nu} p_{\mu\nu} \dx_{\mu'} \dx_{\nu'} \dx_{\nu} = - p_{\mu\nu}  \dx_{\nu'}\dx_{\mu'} \dx_{\nu}. 
$$
Finally, by using first \eqref{r3}, property \eqref{lp}, and then \eqref{r1} we get:
$$
- \ell_{\mu \nu} \dx_{\nu}  \dx_\mu \dx_{\nu'} =  \ell_{\mu \nu}^2 \dx_{\mu}  \dx_\nu \dx_{\nu'}
= \dx_{\mu}  \dx_\nu \dx_{\nu'}  + p_{\mu \nu} \dx_{\nu'}  \dx_{\mu'} \dx_{\nu'} .
$$
that is the last relation.
 \end{proof}
Summing up, three-forms should satisfy, for all pair of indices $(\tau,\sigma)$, the relations
\begin{align}
&\dx_{\tau'} \dx_\sigma \dx_\tau= - \ell_{\tau\sigma} \dx_{\tau'} \dx_{\tau} \dx_\sigma 
\label{three-forms1} \\
&\dx_\tau \dx_\sigma \dx_{\tau'}= - \ell_{\tau\sigma} \dx_{\sigma} \dx_{\tau} \dx_{\tau'}  \label{three-forms2}
\end{align}
together with 
\begin{align}
\dx_\tau \dd x_\sigma \dx_\tau  = - p_{\tau\sigma} \dx_{\sigma'} \dx_{\tau'} \dx_\tau 
 = p_{\tau\sigma} \dx_{\tau} \dx_{\tau'} \dx_{\sigma'}  \, . \label{rep}
\end{align}
It is to be stressed that, despite being $\dd x_\tau \dd x_\tau=0$, in general $\dd x_\tau \dd x_\sigma 
\dd x_\tau \neq 0$; of course these vanish in the classical limit where $p_{\tau \sigma}=0$. 

The relations above, while not all independent (and this will lead to some requirement on the parameters) allow one to find a basis of three-forms. To get a grasp of how this work, let us write explicitly the relations \eqref{three-forms1} and \eqref{three-forms2} that do not contain
$\dd x_0$:  
\begin{align*}
&\dx_3 \dx_1 \dx_2= - \ell_{03} \, \dx_3 \dx_2 \dx_1 
\quad ; \quad
&\dx_2 \dx_1 \dx_3= - \ell_{03} \,\dx_1 \dx_2 \dx_3   
\\
&\dx_2 \dx_1 \dx_3= - \ell_{02} \, \dx_2 \dx_3 \dx_1
\quad ; \quad
&\dx_3 \dx_1 \dx_2= - \ell_{02} \, \dx_1 \dx_3 \dx_2   
\\
&\dx_1 \dx_2 \dx_3= - \ell_{01} \, \dx_1 \dx_3 \dx_2 
\quad ; \quad
&\dx_3 \dx_2 \dx_1= - \ell_{01} \, \dx_2 \dx_3 \dx_1   \, .
\end{align*}
We see that on the left-hand side there do not appear neither the three-form $\dx_1 \dx_3 \dx_2$ nor 
$\dx_2 \dx_3 \dx_1$. This suggests using one of them as the independent one and express the remaining forms as a multiple of the chosen one. With the former $\dx_1 \dx_3 \dx_2$, out of the above relations we get:
\begin{align*}
&\ell_{03} \, \dx_3 \dx_2 \dx_1 = \ell_{02} \, \dx_1 \dx_3 \dx_2   
\quad ; \quad
&\dx_2 \dx_1 \dx_3= \ell_{01} \ell_{03} \,\dx_1 \dx_3 \dx_2   
\\
&\ell_{02} \, \dx_2 \dx_3 \dx_1 = - \ell_{01} \ell_{03} \, \dx_1 \dx_3 \dx_2
\quad ; \quad
&\dx_3 \dx_1 \dx_2= - \ell_{02} \, \dx_1 \dx_3 \dx_2   
\\
&\dx_1 \dx_2 \dx_3= - \ell_{01} \, \dx_1 \dx_3 \dx_2 
\quad ; \quad
&\ell_{01} \ell_{03} \, \dx_2 \dx_3 \dx_1= - \ell_{02} \, \dx_1 \dx_3 \dx_2 \, .
\end{align*}
In fact, for the last one we need to assume that $\ell_{03}\not=0$. 
Next, comparing the second relation in the first column with the last one in the second column we get a condition on the parameters, that is 
$\ell_{02}= \pm \ell_{01} \ell_{03}$; thus the above become
\begin{align*}
&\ell_{03} \, \dx_3 \dx_2 \dx_1 = \pm \ell_{01} \ell_{03} \, \dx_1 \dx_3 \dx_2   
\quad ; \quad
&\dx_2 \dx_1 \dx_3= \ell_{01} \ell_{03} \,\dx_1 \dx_3 \dx_2   
\\
& \pm \ell_{01} \ell_{03} \, \dx_2 \dx_3 \dx_1 = - \ell_{01} \ell_{03} \, \dx_1 \dx_3 \dx_2
\quad ; \quad
&\dx_3 \dx_1 \dx_2= \mp \ell_{01} \ell_{03} \, \dx_1 \dx_3 \dx_2   
\\
&\dx_1 \dx_2 \dx_3= - \ell_{01} \, \dx_1 \dx_3 \dx_2 
\quad ; \quad
&\ell_{01} \ell_{03} \, \dx_2 \dx_3 \dx_1= \mp \ell_{01} \ell_{03} \, \dx_1 \dx_3 \dx_2 \, .
\end{align*}
Had we taken  $\dx_2 \dx_3 \dx_1$ as a basis, we would have obtained an analogous and compatible result (again requiring $\ell_{03}\not=0$). 
To proceed and simplify expressions, we assume that also $\ell_{01}$ is different from zero. Then, the relations on three-forms not containing $\dx_0$ are:
\begin{align}\label{basisthree-0}
\dd x_2 \dx_3 \dx_1 & = \mp \dx_1 \dx_3 \dx_2  \nn \\ 
\dx_1 \dx_2 \dx_3 & = - \ell_{01} \, \dx_1 \dx_3 \dx_2 \nn \\
\dx_3 \dx_2 \dx_1 & = \pm \ell_{01} \, \dx_1 \dx_3 \dx_2  \nn \\ 
\dx_2 \dx_1 \dx_3 & = \ell_{01} \ell_{03} \,\dx_1 \dx_3 \dx_2  \nn \\ 
\dx_3 \dx_1 \dx_2 & = \mp \ell_{01} \ell_{03} \, \dx_1 \dx_3 \dx_2  \,   . 
\end{align}
Next, we list all relations \eqref{rep} whose right hand side does not contain $\dx_0$:
\begin{align*}
\dx_1 \dd x_0 \dx_1 & = p_{01} \, \dx_2 \dx_3 \dx_1 = - p_{01} \, \dx_1 \dx_3 \dx_2    \\
\dx_2 \dd x_0 \dx_2 & = p_{02} \, \dx_1 \dx_3 \dx_2 = - p_{02} \, \dx_2 \dx_3 \dx_1    \\
\dx_3 \dd x_0 \dx_3 & = p_{03} \, \dx_1 \dx_2 \dx_3 = - p_{03} \, \dx_3 \dx_2 \dx_1   
\end{align*}
and using \eqref{basisthree-0} we arrive at 
\begin{align}\label{basisthree-00b}
\dx_1 \dd x_0 \dx_1 & = \mp p_{01} \, \dx_1 \dx_3 \dx_2  = - p_{01} \, \dx_1 \dx_3 \dx_2 \nn  \\
\dx_2 \dd x_0 \dx_2 & = p_{02} \, \dx_1 \dx_3 \dx_2  = \pm p_{02} \, \dx_1 \dx_3 \dx_2  \nn  \\
\dx_3 \dd x_0 \dx_3 & = - \ell_{01} p_{03} \, \dx_1 \dx_3 \dx_2 = \mp \ell_{01} p_{03} \, \dx_1 \dx_3 \dx_2 \, . 
\end{align}
From this, we see that the choice $\ell_{02}= - \ell_{01} \ell_{03}$ will lead to 
$$
p_{01} = p_{02} = p_{03} = 0 ,
$$
while the choice $\ell_{02}=\ell_{01} \ell_{03}$ yields
\begin{align}\label{basisthree-0b}
\dx_1 \dd x_0 \dx_1 & = - p_{01} \, \dx_1 \dx_3 \dx_2  \nn  \\
\dx_2 \dd x_0 \dx_2 & = p_{02} \, \dx_1 \dx_3 \dx_2   \nn  \\
\dx_3 \dd x_0 \dx_3 & = - \ell_{01} p_{03} \, \dx_1 \dx_3 \dx_2 \, . 
\end{align}

Equations \eqref{basisthree-0} and \eqref{basisthree-0b} list all three-forms that can be expressed in terms of the three-form $\theta_0 := \dx_1 \dx_3 \dx_2$. We can repeat the analysis above for each index $\nu \in \{0,1,2,3\}$. 
For this it is convenient to tabulate all possible values of the indices. 
For each index $\nu \in \int$ fixed, we define indices $\tilde\nu, \mu_\nu, \tilde\mu_\nu$ by
\beq\label{tabella}
\begin{array}{c|c|c|c}
\nu & \tilde\nu & \mu_\nu & \tilde\mu_\nu 
\\
\hline 0&1&2&3
\\
1&0&3&2
\\
2&3&0&1
\\
3&2&1&0
\end{array} \, .
\eeq
These are such that for $(\nu,\mu_\nu)$ fixed, then $(\nu',\mu_\nu')=(\tilde\nu, \tilde\mu_\nu)$. 
This then gives:
\beq\label{primes}
(\nu,\mu_\nu)'=(\tilde\nu, \tilde\mu_\nu)\, , \quad 
(\nu,\mt)'=(\tilde\nu,\mu_\nu) \quad \mbox{and} \quad 
(\nu,\tilde\nu)'=(\mu_\nu,\mt).
\eeq
Furthermore, an explicit computation for each $\nu \in \int$ fixed yields that 
\beq\label{lem1} 
\{ (\mu, \nu',\mu')\}_{\mu \neq \nu}=\{(\mn, \nt, \mt), (\mt, \nt, \mn),(\nt, \mn,\mt)\} .
\eeq
Finally, using the table \eqref{tabella} again with a direct computation one finds
for each index $\nu$ 
\beq\label{ells}
\ell_{\mn \nu}= \ell_{02} \; , \quad
\ell_{\mn \mt}= \ell_{01} \; , \quad 
\ell_{\mn \nt}= \ell_{03} \; , \qquad \forall \nu = 0,1,2,3 \, .
\eeq

In the relations \eqref{three-forms1} and \eqref{three-forms2}, the ones not containing $\dx_\nu$ are those for indices $\sigma,\tau$ such that neither $\sigma$ nor $\tau$ are equal to $\nu$ and also such that 
$\tau' \neq \nu$. 
The first assumption gives for the pair $(\tau,\sigma)$ that
$$
(\tau,\sigma) = (\nt,\mn), ~ (\nt,\mt), ~ (\mn,\mt), ~ (\mn,\nt), ~ (\mt,\nt), ~ (\mt,\mn)
$$
but from \eqref{primes} each of the first three cases gives $\tau' = \nu$ and so it has also to be excluded.
Thus, only the last three choices are possible and \eqref{three-forms1} and \eqref{three-forms2} give 
the following six equations:
\begin{align*}
&\dx_{\mt} \dx_\nt \dx_\mn= - \ell_{\mn\nt} \dx_{\mt} \dx_{\mn} \dx_\nt 
\quad ; \quad
&\dx_\mn \dx_\nt \dx_{\mt}= - \ell_{\mn\nt} \dx_{\nt} \dx_{\mn} \dx_{\mt}   
\\
&\dx_{\mn} \dx_\nt \dx_\mt= - \ell_{\mt\nt} \dx_{\mn} \dx_{\mt} \dx_\nt 
\quad ; \quad
&\dx_\mt \dx_\nt \dx_{\mn}= - \ell_{\mt\nt} \dx_{\nt} \dx_{\mt} \dx_{\mn}   
\\
&\dx_{\nt} \dx_\mn \dx_\mt= - \ell_{\mt\mn} \dx_{\nt} \dx_{\mt} \dx_\mn 
\quad ; \quad
&\dx_\mt \dx_\mn \dx_{\nt}= - \ell_{\mt\mn} \dx_{\mn} \dx_{\mt} \dx_{\nt}  \, .
\end{align*}
Now on the left-hand side there do not appear neither the three-form $\dx_{\nt} \dx_{\mt} \dx_\mn$ nor 
$\dx_\mn \dx_{\mt} \dx_{\nt}$ and we can use one of them as the independent one. With  the former, if we denote $\theta_\nu:=\dx_{\nt} \dx_{\mt} \dx_\mn$, the above become
\begin{align*}
&\dx_{\mt} \dx_\nt \dx_\mn= - \ell_{\mn\nt} \dx_{\mt} \dx_{\mn} \dx_\nt 
\quad ; \quad
&\dx_\mn \dx_\nt \dx_{\mt}=  \ell_{\mn\nt} \ell_{\mt\mn} \theta_\nu  
\\
&\dx_{\mn} \dx_\nt \dx_\mt= - \ell_{\mt\nt} \dx_{\mn} \dx_{\mt} \dx_\nt 
\quad ; \quad
&\dx_\mt \dx_\nt \dx_{\mn}= - \ell_{\mt\nt} \theta_\nu 
\\
&\dx_{\nt} \dx_\mn \dx_\mt= - \ell_{\mt\mn} \theta_\nu 
\quad ; \quad
&\dx_\mt \dx_\mn \dx_{\nt}= - \ell_{\mt\mn} \dx_{\mn} \dx_{\mt} \dx_{\nt}   
\end{align*}
and next
\begin{align*}
&\ell_{02} \theta_\nu =  \ell_{03} \dx_{\mt} \dx_{\mn} \dx_\nt 
\quad ; \quad
&\dx_\mn \dx_\nt \dx_{\mt}=  \ell_{03} \ell_{01} \theta_\nu  
\\
&\ell_{03} \ell_{01} \theta_\nu= - \ell_{02} \dx_{\mn} \dx_{\mt} \dx_\nt 
\quad ; \quad
&\dx_\mt \dx_\nt \dx_{\mn}= - \ell_{02} \theta_\nu 
\\
&\dx_{\nt} \dx_\mn \dx_\mt= - \ell_{01} \theta_\nu 
\quad ; \quad
&\dx_\mt \dx_\mn \dx_{\nt}= - \ell_{01} \dx_{\mn} \dx_{\mt} \dx_{\nt}   \, .
\end{align*}
Again we need $\ell_{03}\not=0$  and thus $\ell_{02} = \pm \ell_{01} \ell_{03}$. 
Then, with $\ell_{01}$ different from zero we get
\begin{align}\label{basis3}
&\dx_{\nt} \dx_{\mt} \dx_\mn  := \theta_\nu
\nn \\
& \dx_{\mn} \dx_{\mt} \dx_\nt = \mp \theta_\nu
\nn \\
&\dx_{\mt} \dx_{\mn} \dx_\nt  = \pm \ell_{01} \theta_\nu 
\nn \\
&\dx_{\nt} \dx_\mn \dx_\mt= - \ell_{01} \theta_\nu 
\nn \\
&\dx_\mn \dx_\nt \dx_{\mt}=  \ell_{01} \ell_{03} \theta_\nu
\nn \\
&\dx_\mt \dx_\nt \dx_{\mn}= \mp \ell_{01} \ell_{03} \theta_\nu \, .
\end{align}
Next, the relations in \eqref{rep} not containing $\dd x_\nu$ in the right hand side are
\begin{align*}
\dd x_\nt \dd x_\nu \dd x_\nt & 
= p_{\nu \nt} \, \dd x_\mn \dd x_\mt \dd x_\nt = - p_{\nu \nt} \, \dd x_\nt \dd x_\mt \dd x_\mn  \\
\dd x_\mn \dd x_\nu \dd x_\mn & 
= p_{\nu \mn} \, \dd x_\nt \dd x_\mt \dd x_\mn = -p_{\nu \mn} \, \dd x_\mn \dd x_\mt \dd x_\nt   \\
\dd x_\mt \dd x_\nu \dd x_\mt & 
= p_{\nu \mt} \, \dd x_\nt \dd x_\mn \dd x_\mt = - p_{\nu \mt} \, \dd x_\mt \dd x_\mn \dd x_\nt \ . \\
\end{align*}
And using \eqref{basis3} we arrive at 
\begin{align*}
\dd x_\nt \dd x_\nu \dd x_\nt & 
= \mp \, p_{\nu \nt} \, \theta_\nu = - p_{\nu \nt} \, \theta_\nu \, , \\
\dd x_\mn \dd x_\nu \dd x_\mn & 
= p_{\nu \mn} \, \theta_\nu = \pm \, p_{\nu \mn} \, \theta_\nu \, , \\
\dd x_\mt \dd x_\nu \dd x_\mt & 
= - \ell_{01} p_{\nu \mt} \, \theta_\nu = \mp \, \ell_{01} p_{\nu \mt} \, \theta_\nu \, . \\
\end{align*}
As before the choice $\ell_{02}= - \ell_{01} \ell_{03}$ leads to 
\beq\label{ptutti0}
p_{\nu \nt} = p_{\nu \mn} = p_{\nu \mt} = 0 ,
\eeq
while the choice $\ell_{02}=\ell_{01} \ell_{03}$ yields
\begin{align}\label{basis3b}
\dd x_\nt \dd x_\nu \dd x_\nt & 
= - p_{\nu \nt} \, \theta_\nu ,  \nn \\
\dd x_\mn \dd x_\nu \dd x_\mn & 
= p_{\nu \mn} \theta_\nu \, , \nn \\
\dd x_\mt \dd x_\nu \dd x_\mt & 
= - \ell_{01} p_{\nu \mt} \, \theta_\nu  \, .  
\end{align}
We may conclude that the space $\omlp^3$ of three-forms is generated as a bi-module by the four elements
$\theta_\nu=\dx_{\nt} \dx_{\mt} \dx_\mn$, for $\nu\in\{0,1,2,3\}$. 

\subsubsection{Four-forms}\label{sec:4forms}
We move to the analysis of the bi-module $\omlp^4$ of four-forms. In this section we take 
all $\ell_{\mu \nu}$ to be non zero, $\ell_{01},\ell_{02}, \ell_{03} \neq 0$ and (avoiding the case where all $p_{\mu\nu}$ vanish, see \eqref{ptutti0} above)
\beq\label{hyp}
\ell_{02}=\ell_{01} \ell_{03} \; .
\eeq 
These are  the natural assumptions in order to include the classical commutative case 
(where $\ell_{\mu \nu} =1$ for all $\mu,\nu$) and they, in particular \eqref{hyp}, are satisfied by the Connes--Dubois-Violette four-planes (cf. after \eqref{cc}) and Sklyanin algebras (cf. \eqref{cond-l}).

Firstly, since for a fixed index $\nu$ all three-forms that do not contain $\dx_\nu$ 
are proportional to $\theta_\nu$, we observe that
$$\dx_\tau  \theta_\nu=0 = \theta_\nu \dx_\tau  \quad \mbox{ for all } \tau \neq \nu \, .
$$
Hence, a priori, as candidates for basis elements we need to analyse only the four-forms
$$
\dx_\nu  \theta_\nu \quad \mbox{and} \quad \theta_\nu \dx_\nu  \quad \mbox{ for all } \nu=0,1,2,3 \, .
$$
We show that all these forms are proportional and, as a consequence, the bi-module $\omlp^4$
of four-forms is one-dimensional.  Out of \eqref{basis3}, we observe that 
\beq\label{oss-3forms}
\dx_\mu \dx_\tau \dx_\sigma = - \dx_\sigma \dx_\tau \dx_\mu \;
\mbox{ for all distinct indices }  \mu,\tau,\sigma \, .
\eeq
Using this result  twice, we promptly obtain
\begin{align*}
\omega_3= \dx_3 \theta_3 = \dx_3 (\dx_2 \dx_0 \dx_1) =
- (\dx_3 \dx_1 \dx_0) \dx_2 =   \dx_0 \dx_1 \dx_3 \dx_2= \omega_0
\\
 \omega_2= \dx_2 \theta_2 = \dx_2 (\dx_3 \dx_1 \dx_0) =
- (\dx_2 \dx_0 \dx_1) \dx_3 =  \dx_1 \dx_0 \dx_2 \dx_3= \omega_1 \, .
\end{align*}
Moreover, by using  \eqref{two-forms}, we compute
\begin{align*}
\omega_1&= (\dx_1 \dx_0)(\dx_2 \dx_3) = 
(\ell_{01} \dx_0 \dx_1 +p_{10} \dx_2 \dx_3) (\ell_{01} \dx_3 \dx_2 +p_{23} \dx_1 \dx_0)
\\
&=
\ell_{01}^2 \dx_0 \dx_1 \dx_3 \dx_2 - p_{01}p_{23} \dx_2 \dx_3 \dx_1 \dx_0
\\
&=
\ell_{01}^2 \omega_0 - p_{01}p_{23} \omega_2.
\end{align*}
Thus, $\ell_{01}^2 \omega_0 = (1+p_{01}p_{23} ) \omega_1 = \ell_{01}^2 \omega_1$ (being $(1+p_{01}p_{23} )= \ell_{01}^2$ from condition \eqref{lp} of Definition~\ref{def:lp})
giving the further identity
$\ell_{01}^2 \omega_1  =\ell_{01}^2 \omega_0$
and hence, being $\ell_{01}\neq 0$,  $\omega_1  =\omega_0$.
Summarizing we have found that
\beq \label{omega}
\omega_0  =\omega_1=\omega_2  =\omega_3=:\omega = \dx_0 \dx_1 \dx_3 \dx_2 \, .
\eeq
We next show that for each index $\nu$, the form
$\theta_\nu \dx_\nu$ is proportional to $\omega$ too.
By using \eqref{oss-3forms} we easily compute 
\begin{align*}
\theta_0 \dx_0 = \dx_1(\dx_3 \dx_2 \dx_0) = -  \dx_1(\dx_0 \dx_2 \dx_3) =-\omega_1 =-\omega
\\
\theta_1 \dx_1 = \dx_0(\dx_2 \dx_3 \dx_1) = -  \dx_0(\dx_1 \dx_3 \dx_2) =-\omega_0 =-\omega
\\
\theta_2 \dx_2 = \dx_3(\dx_1 \dx_0 \dx_2) = -  \dx_3(\dx_2 \dx_0 \dx_1) =-\omega_3 =-\omega
\\
\theta_3 \dx_3 = \dx_2(\dx_0 \dx_1 \dx_3) = -  \dx_2(\dx_3 \dx_1 \dx_0) =-\omega_2 =-\omega
\end{align*}
as wished. This also shows that $ \theta_\nu \dx_\nu = -\dx_\nu\theta_\nu$ for each $\nu$.
We can thus conclude that the bi-module of four-forms is one-dimensional, 
generated by the form $\omega$ (say). This top form $\omega$ will be shown to be not zero in \S\ref{vofo} by identifying it with the (differential calculus representation of the) volume form of a pre-regular multilinear form for our family of algebras $\alp$.

As a final remark, it is worth stressing to observe that in $\omlp^4$, 
in addition to `usual' forms $\dx_\nu \dx_\mu \dx_\tau \dx_\sigma$ 
with $\nu \neq \mu \neq \tau \neq \sigma$, there are also `quantum' elements of the form
$\dx_\nu \dx_\mu \dx_\nu \dx_\mu$,  with $\nu \neq \mu$. Nevertheless these forms are proportional to $\omega$, accordingly to the following relations deduced from \eqref{basis3b}:
\begin{align*}
\dx_\nu  \dx_\nt \dx_\nu \dx_\nt  &= - p_{\nu \nt} \, \omega_\nu \, , \\
\dx_\nu  \dx_\mn \dx_\nu \dx_\mn  &= p_{\nu \mn} \omega_\nu \, , \\
\dx_\nu \dx_\mt \dx_\nu \dx_\mt  &= - \ell_{01} p_{\nu \mt} \, \omega_\nu \, .  
\end{align*}
Thus they vanish in the `classical' commutative case all $p_{\mu \nu}$ being zero then.

\subsection{The volume form} \label{vofo}
In this section we shall make contact with the theory of pre-regular multilinear forms 
of \cite{dv05,dv07}. 
Let $W$ be the linear form on $\k^4$ with components  
\beq\label{W}
W(e_{\alpha},e_{\beta},e_{\rho},e_{\sigma})=:
W_{\alpha \beta \rho \sigma} = 
L_{\alpha \beta \rho \sigma} \varepsilon_{\alpha \beta \rho \sigma} + P_{\alpha \beta} \delta_{\alpha \rho} \delta_{\beta \sigma}  \varepsilon_{\alpha \beta \alpha' \beta'}
\eeq
in the canonical basis $\{e_{\mu}, \mu=0,1,2,3\}$ of $\k^4$,
where $\varepsilon_{\alpha \beta \rho \sigma} $ is the completely antisymmetric tensor with $\varepsilon_{0123}=1 $ and where, for distinct indices $\alpha, \beta, \rho, \sigma$, the components $L_{\alpha \beta \rho \sigma}$ and $P_{\alpha \beta}$ are determined (uniquely)  by the properties
\begin{enumerate}
\item \quad
$L_{\alpha \beta \rho \sigma} =L_{\sigma \alpha \beta \rho }$  \quad ; 
\quad $L_{\alpha \beta \rho \sigma}  = L_{\beta  \alpha  \sigma \rho } $ \,\, ;  \vspace{5pt}
\item \quad
$L_{\alpha \beta \alpha' \beta'} = \ell_{\beta' \alpha' } L_{\alpha \beta  \beta' \alpha'}$  \,\, ; \vspace{5pt}
\item \quad
$P_{\alpha \beta}= p_{\beta' \alpha'} L_{\alpha \beta  \beta' \alpha'}$
\end{enumerate}
and by setting $L_{0132} :=1$. One easily shows that
\beq\label{Pprop}
P_{ \beta \alpha}=
p_{ \alpha' \beta'} L_{ \beta \alpha  \alpha'\beta' }
=- p_{\beta' \alpha'} L_{\alpha \beta  \beta' \alpha'}=- P_{  \alpha \beta} \, .
\eeq

In relation to \cite[Def.~2]{dv07} we have the following result
\begin{lem}
The linear form $W$ is pre-regular (without twist), that is   
\begin{enumerate}[(I)]
\item \quad
$W_{\sigma \alpha \beta \rho }  = - W_{\alpha \beta \rho \sigma} $ 
\qquad \mbox{for all indices} \quad $\alpha, \beta, \rho, \sigma$
\item[~]
\item \quad
If $v \in \k^4$ is such that  
$W(v,e_{\beta},e_{\rho},e_{\sigma}) = 0$ for all indices $\beta, \rho, \sigma$, then $v=0$. 
\end{enumerate}
\end{lem}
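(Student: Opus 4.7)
The two terms appearing in the definition \eqref{W} of $W$ have disjoint supports: the $L$-term is nonzero only when $\alpha,\beta,\rho,\sigma$ are pairwise distinct, whereas the $P$-term requires $\alpha=\rho$ and $\beta=\sigma$ with $\alpha\neq\beta$. I would prove property (I) by splitting into three cases along this dichotomy. When $\alpha,\beta,\rho,\sigma$ are all distinct, only the $L$-term contributes on both sides; combining the cyclic invariance $L_{\alpha\beta\rho\sigma}=L_{\sigma\alpha\beta\rho}$ from property (1) with the sign change $\varepsilon_{\sigma\alpha\beta\rho}=-\varepsilon_{\alpha\beta\rho\sigma}$ of the totally antisymmetric tensor under a cyclic shift of four slots gives the desired identity. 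When the pattern is $(\alpha,\beta,\alpha,\beta)$ with $\alpha\neq\beta$, only the $P$-term survives; the cyclic shift produces $W_{\beta\alpha\beta\alpha}=P_{\beta\alpha}\varepsilon_{\beta\alpha\beta'\alpha'}$, and the sign is supplied by the antisymmetry $P_{\beta\alpha}=-P_{\alpha\beta}$ from \eqref{Pprop}, once one notes that $\varepsilon_{\beta\alpha\beta'\alpha'}=\varepsilon_{\alpha\beta\alpha'\beta'}$ (swapping the first two and last two slots gives two sign changes). All remaining degenerate configurations (exactly one coincidence among the indices) produce a vanishing $\varepsilon$ on both sides and the identity is trivial.

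For property (II), expand $v=\sum_\alpha v^\alpha e_\alpha$, so that the hypothesis reads
\[
\sum_\alpha v^\alpha W_{\alpha\beta\rho\sigma} = 0 \qquad \text{for all } \beta,\rho,\sigma\in\{0,1,2,3\}.
\]
Given any index $\alpha_0$, I would specialise $(\beta,\rho,\sigma)$ to be the three indices in $\{0,1,2,3\}\setminus\{\alpha_0\}$ listed in any order. Since then $\beta,\rho,\sigma$ are pairwise distinct, in particular $\beta\neq\sigma$, and so the $P$-contribution $P_{\alpha\beta}\delta_{\alpha\rho}\delta_{\beta\sigma}\varepsilon_{\alpha\beta\alpha'\beta'}$ vanishes identically in $\alpha$. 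The surviving $L$-contribution is nonzero only for the unique $\alpha$ that completes $\beta,\rho,\sigma$ to a permutation, namely $\alpha=\alpha_0$, so the equation reduces to
\[
v^{\alpha_0}\, L_{\alpha_0\beta\rho\sigma}\,\varepsilon_{\alpha_0\beta\rho\sigma} = 0 .
\]
Since $\varepsilon_{\alpha_0\beta\rho\sigma}=\pm 1$, it remains to establish $L_{\alpha_0\beta\rho\sigma}\neq 0$, whence $v^{\alpha_0}=0$, and this for every $\alpha_0$ gives $v=0$.

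The main obstacle is precisely the nonvanishing of $L$ on every permutation. Starting from the normalisation $L_{0132}=1$, one iterates properties (1) and (2): the cyclic and double-transposition invariance of (1) generates an orbit of the dihedral group $D_4$ (eight permutations) on which $L$ is constantly $\pm 1$; the remaining orbits are reached by a single application of (2), which multiplies the value by $\ell_{\beta'\alpha'}$. Under the standing hypotheses of this section, namely $\ell_{02}=\ell_{01}\ell_{03}$ with $\ell_{01},\ell_{02},\ell_{03}\neq 0$, all the quantities $\ell_{\mu\nu}$ for $\mu\neq\nu$ are nonzero, so every value $L_{\alpha\beta\rho\sigma}$ (on distinct indices) is a product of nonzero factors and is nonzero. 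This closes both (I) and (II).
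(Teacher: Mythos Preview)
Your argument is correct and follows essentially the same route as the paper's: for (I) you use the cyclic invariance of $L$, the antisymmetry $P_{\beta\alpha}=-P_{\alpha\beta}$, and the sign of $\varepsilon$ under a cyclic shift, which is exactly what the paper's one-line computation encodes (the paper just does not split into cases). For (II) your proof is in fact more explicit than the paper's, which simply asserts that ``from the properties of the $L$'s and $P$'s before one gets that $v=0$''; your specialisation to $(\beta,\rho,\sigma)$ distinct, killing the $P$-term via $\delta_{\beta\sigma}=0$ and isolating a single $L$-coefficient, together with the nonvanishing of the $L_{\alpha\beta\rho\sigma}$ under the standing hypotheses $\ell_{01},\ell_{02}\neq 0$, is precisely the intended mechanism. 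One cosmetic point: your parenthetical ``exactly one coincidence'' in case (I) does not exhaust the remaining degenerate patterns (e.g.\ $(\alpha,\beta,\beta,\alpha)$ or $(\alpha,\alpha,\beta,\beta)$), but your conclusion that both sides vanish there is still correct, since in every such case either the $\varepsilon$-factor or the Kronecker deltas kill each term.
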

\begin{proof}
By using the defining properties (1), (2) and (3) and in particular \eqref{Pprop}, for all indices 
$\alpha, \beta, \rho, \sigma$ one verifies that
\begin{align*}
W_{\sigma \alpha \beta \rho } & = L_{\sigma \alpha \beta \rho } \varepsilon_{\sigma \alpha \beta \rho } + P_{\sigma \alpha }  \delta_{\sigma \beta }  \delta_{ \rho \alpha} \varepsilon_{\sigma \alpha  \sigma' \alpha' } = - L_{\alpha \beta \rho \sigma} \varepsilon_{\alpha \beta \rho \sigma} - P_{  \alpha \beta} \delta_{\alpha \rho} \delta_{\beta \sigma}  \varepsilon_{\alpha \beta \alpha' \beta'}
\\ & = - W_{\alpha \beta \rho \sigma} \, ,
\end{align*}
showing that $W$ is cyclic. Next, suppose there is a vector 
$v = (v_\alpha ) \in \k^4$ such that for all indices $\beta, \rho, \sigma$ it holds that 
$$
0 = W(v,e_{\beta},e_{\rho},e_{\sigma})= \sum_\alpha v_\alpha \, W_{\alpha \beta \rho \sigma} = 
\sum_\alpha v_\alpha\, \big(L_{\alpha \beta \rho \sigma} \varepsilon_{\alpha \beta \rho \sigma} + P_{\alpha \beta} \delta_{\alpha \rho} \delta_{\beta \sigma}  \varepsilon_{\alpha \beta \alpha' \beta'} \big) .
$$
Then, from the properties of the $L$'s and $P$'s before one gets that $v=0$; thus $W$ 
is 1-site non-degenerate. The two properties $(I)$ and $(II)$ say that $W$ is pre-regular. 
\end{proof}

\begin{lem}
Let $A(W,2)$ be the quadratic algebra generated by elements $x_\mu$, $\mu=0,1,2,3$,
with relations
\beq\label{comm-W}
\sum_{\rho, \sigma} W_{\alpha \beta \rho \sigma} \, x_\rho x_\sigma =0 , \quad  \forall ~ \alpha , \beta =0,1,2,3 \,\, .
\eeq
Then $A(W,2)$ coincides with the algebra $\alp$.
\end{lem}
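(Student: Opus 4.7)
My plan is to unpack the left-hand side of \eqref{comm-W} for each pair of indices $(\alpha,\beta)$ and show, after a short algebraic manipulation, that the resulting equation is a nonzero scalar multiple of a relation \eqref{comm-rel-x} of $\alp$. First I would note the trivial case $\alpha=\beta$: since $\varepsilon_{\alpha\beta\rho\sigma}=0$ for all $\rho,\sigma$ and $\varepsilon_{\alpha\beta\alpha'\beta'}=0$, formula \eqref{W} gives $W_{\alpha\beta\rho\sigma}=0$, so \eqref{comm-W} is vacuous.

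For $\alpha\neq\beta$, the antisymmetric tensor $\varepsilon_{\alpha\beta\rho\sigma}$ forces $(\rho,\sigma)\in\{(\alpha',\beta'),(\beta',\alpha')\}$ in the first term of \eqref{W}, while the deltas in the second term force $(\rho,\sigma)=(\alpha,\beta)$. Hence only three monomials survive, and the relation reads
\begin{equation*}
L_{\alpha\beta\alpha'\beta'}\varepsilon_{\alpha\beta\alpha'\beta'}\,x_{\alpha'}x_{\beta'}
+L_{\alpha\beta\beta'\alpha'}\varepsilon_{\alpha\beta\beta'\alpha'}\,x_{\beta'}x_{\alpha'}
+P_{\alpha\beta}\varepsilon_{\alpha\beta\alpha'\beta'}\,x_{\alpha}x_{\beta}=0.
\end{equation*}
Using $\varepsilon_{\alpha\beta\beta'\alpha'}=-\varepsilon_{\alpha\beta\alpha'\beta'}$ together with defining property~(2), $L_{\alpha\beta\alpha'\beta'}=\ell_{\beta'\alpha'}L_{\alpha\beta\beta'\alpha'}$, and property~(3), $P_{\alpha\beta}=p_{\beta'\alpha'}L_{\alpha\beta\beta'\alpha'}$, I can factor out the common scalar $\varepsilon_{\alpha\beta\alpha'\beta'}L_{\alpha\beta\beta'\alpha'}$ to rewrite the relation as
\begin{equation*}
\varepsilon_{\alpha\beta\alpha'\beta'}L_{\alpha\beta\beta'\alpha'}\bigl[\ell_{\beta'\alpha'}\,x_{\alpha'}x_{\beta'}-x_{\beta'}x_{\alpha'}+p_{\beta'\alpha'}\,x_{\alpha}x_{\beta}\bigr]=0.
\end{equation*}
Modulo the nonzero scalar in front, this is precisely \eqref{comm-rel-x} with $(\mu,\nu)=(\beta',\alpha')$, recalling that $(\beta')'=\beta$ and $(\alpha')'=\alpha$. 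Letting $(\alpha,\beta)$ range over the six unordered pairs of distinct indices produces exactly the six relations listed in \eqref{exre}, so the ideals of quadratic relations coincide and $A(W,2)=\alp$.

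The main point to verify carefully is that the prefactor $L_{\alpha\beta\beta'\alpha'}$ is nonzero for every relevant pair, since the equivalence above is only an equivalence of relations after cancelling this scalar. This is where the normalisation $L_{0132}=1$ together with the cyclic and exchange symmetries in property~(1) and the recursion in property~(2) come in: they propagate the nonvanishing of $L_{0132}$ to all the $L_{\alpha\beta\beta'\alpha'}$ (each becoming $\pm 1$ or a product of the parameters $\ell_{\mu\nu}$, all of which are taken nonzero as in \S\ref{sec:4forms}). Verifying this consistent, nonvanishing determination of the $L$'s is the only nontrivial bookkeeping step in the proof.
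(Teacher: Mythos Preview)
Your proof is correct and follows essentially the same route as the paper's: unpack the sum in \eqref{comm-W}, use the antisymmetry of $\varepsilon$ to isolate the three surviving monomials, then apply properties~(2) and~(3) to factor out $\varepsilon_{\alpha\beta\alpha'\beta'}L_{\alpha\beta\beta'\alpha'}$ and recognise the bracket as the relation \eqref{comm-rel-x} for $(\mu,\nu)=(\beta',\alpha')$. The paper handles the nonvanishing of the prefactor more tersely (it simply invokes $\ell_{01},\ell_{02}\neq 0$, since by \eqref{L-expl} each $L_{\alpha\beta\beta'\alpha'}$ equals $1$, $\ell_{01}$, or $\ell_{02}$---no signs appear), but your discussion of this point is in the same spirit.
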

\begin{proof}
By the antisymmetry of $\varepsilon_{\alpha \beta \rho \sigma}$, 
fixing $\alpha, \beta$, the only possibilities 
for the last pair of indices in $L_{\alpha \beta \rho \sigma}$ are $(\rho, \sigma) = (\beta', \alpha')$ or $(\rho, \sigma) = (\alpha', \beta')$. Moreover, taking $\ell_{01}, \ell_{02}\neq 0$ one has that $L_{\alpha \beta \beta' \alpha'} \neq 0$ for all $ \alpha, \beta$. Then, for $\alpha, \beta$ (arbitrary but) fixed we have
\begin{align*}
\sum_{\rho \sigma} W_{\alpha \beta \rho \sigma} x_\rho x_\sigma =0 
&\iff
\varepsilon_{\alpha \beta \alpha' \beta'} \left(
L_{\alpha \beta \alpha' \beta'} x_{\alpha'} x_{\beta'}
- L_{\alpha \beta \beta' \alpha'} x_{\beta'} x_{\alpha'}
 + P_{\alpha \beta} x_{\alpha } x_{\beta}  
\right)=0
\\
&\iff
\varepsilon_{\alpha \beta \alpha' \beta'}  L_{\alpha \beta \beta' \alpha'} \left(
 \ell_{ \beta' \alpha'} x_{\alpha'} x_{\beta'}
-  x_{\beta'} x_{\alpha'}
 + p_{ \beta' \alpha'} x_{\alpha } x_{\beta}  
\right)=0
\\ \\
&\iff x_{\beta'} x_{\alpha'}=
 \ell_{ \beta' \alpha'} x_{\alpha'} x_{\beta'}
 + p_{ \beta' \alpha'} x_{\alpha } x_{\beta}  \, ,
\end{align*}
showing that the generators elements $x_\mu$ satisfy conditions \eqref{comm-W} if and only if  they satisfy \eqref{comm-rel-x}. Thus the algebras $A(W,2)$ and $\alp$ are the same.
\end{proof}

By the theory of pre-regular forms the element $1 \ot W$ is a nontrivial Hochschild cycle 
on $A(W,2)$ (see e.g. \cite[Prop. 10]{dv07}). 
We are lead to define as a volume form 
the four-form
\begin{align}\label{w0}
\mbox{vol} & := \sum_{\alpha, \beta ,\rho, \sigma} W_{\alpha \beta \rho \sigma} \dd x_\alpha \dd x_\beta \dd x_\rho \dd x_\sigma 
\nn \\
& = \sum_{\alpha, \beta ,\rho, \sigma}
L_{\alpha \beta \rho \sigma}  ~ \varepsilon_{\alpha \beta \rho \sigma} \dd x_\alpha \dd x_\beta \dd x_\rho \dd x_\sigma  
+ \sum_{\alpha, \beta  } P_{\alpha \beta} ~  \varepsilon_{\alpha \beta \alpha' \beta'} \dd x_\alpha \dd x_\beta \dd x_\alpha \dd x_\beta ~.
\end{align}

\begin{rem} The family $\IR^4_\mathbf{u}$ of nocommutative four-planes
introduced in \cite{cdv} and described briefly in \S\ref{sec:cdv} was obtained in connection with a problem in 
$K$-homology. In particular, out of the top Chern class of a unitary there was defined a Hochschild cycle playing the role of the volume form of $\IR^4_\mathbf{u}$. This cycle is of the form (cf. \cite[eq. (2.14)]{cdv}). 
$$
v = \sum_{\alpha, \beta ,\rho, \sigma} 
\varepsilon_{\alpha \beta \rho \sigma} S_{\alpha \beta \rho \sigma} \, \II \ot x_\alpha \ot x_\beta \ot x_\rho \ot x_\sigma
-\sum_{\alpha, \beta } T_{\alpha \beta} \, \II \ot x_\alpha \ot x_\beta \ot x_\alpha \ot x_\beta \, , 
$$
with explicit tensors $S_{\alpha \beta \rho \sigma}$ and $T_{\alpha \beta}$ which depend on  
the deformation parameters $\mathbf{u} \in \IT^3$.  
A comparison with the volume form in \eqref{w0} (for the algebras $\alp$ of \S \ref{sec:cdv}) shows that the latter is a differential calculi representation of the homology class $v$. 
\end{rem}

\subsubsection{Explicit expression of the volume form}
Let us have a closer look at the components $W_{\alpha \beta \rho \sigma}$ of  $\mbox{vol}$. 
Firstly, by the properties $L_{\alpha \beta \rho \sigma} =L_{\sigma \alpha \beta \rho }  = L_{\beta  \alpha  \sigma \rho } $, we have that
\begin{align}\label{L-expl} 
L_{0132}=L_{2013}=L_{3201}=L_{1320}=L_{1023}=L_{3102}=L_{2310}=L_{0231} \nn
\\
L_{0123}=L_{3012}=L_{2301}=L_{1230}=L_{1032}=L_{0321}=L_{3210}=L_{2103} 
\nn
\\
L_{0312}=L_{2031}=L_{1203}=L_{3120}=L_{3021}=L_{1302}=L_{2130}=L_{0213}
\end{align}
with
$$
L_{0132} =1 ~, \quad 
L_{0123}= \ell_{01} ~, \quad 
L_{0312}= \ell_{01}\ell_{03} = \ell_{02}
$$
from the properties $L_{\alpha \beta \alpha' \beta'} = \ell_{\beta' \alpha' } L_{\alpha \beta  \beta' \alpha'}$ and  the condition $L_{0132} =1$.

\noindent
Next, being $P_{\alpha \beta}= p_{\beta' \alpha'} L_{\alpha \beta  \beta' \alpha'}$, we have
\begin{align}\label{P-expl} 
& 
P_{01}=p_{32} ~,  \quad P_{02}= p_{31} ~, \quad P_{03}= \ell_{01}p_{21} ~, \nn \\
& 
P_{13}=p_{20} ~, \quad P_{23}=p_{10} ~, \quad P_{12}=\ell_{01} p_{30} ~,
\end{align}
with $P_{\alpha \beta}= - P_{ \beta \alpha}$.

On the other hand,  we have shown in \S \ref{sec:4forms} that all $4$-forms are proportional. 
In particular, for distinct indices $\alpha, \beta,\rho, \sigma$ we have found that 
$$
\dx_\alpha \dx_\beta \dx_\rho \dx_\sigma= \eta_{\alpha \beta \rho \sigma} \, \omega
$$ where $\omega$ is the generator of $\omlp^4$ introduced in \eqref{omega} and explicitly:
\begin{align}\label{eta-l} 
\eta_{\nu \nt \mt \mn}= 
&~ \eta_{0132}= \eta_{1023}=  \eta_{2310}=  \eta_{3201}= 1 = - \epsilon_{\nu \nt \mt \mn} 
\nn
\\
\eta_{\nu \mn \mt \nt}=
&~ \eta_{0231}= \eta_{1320}=  \eta_{2013}=  \eta_{3102}= -1 = - \epsilon_{\nu \mn \mt \nt}
\nn
\\
\eta_{\nu \mt \mn \nt}=
&~ \eta_{0321}= \eta_{1230}=  \eta_{2103}=  \eta_{3012}= \ell_{01} =
- \ell_{01}\epsilon_{\nu \mt \mn \nt}
\nn
\\
\eta_{\nu \nt \mn \mt}=
&~ \eta_{0123}= \eta_{1032}=  \eta_{2301}=  \eta_{3210}= - \ell_{01}  = - \ell_{01} \epsilon_{\nu \nt \mn \mt}
\nn
\\
\eta_{\nu \mn \nt \mt}=
&~ \eta_{0213}= \eta_{1302}=  \eta_{2031}=  \eta_{3120}= \ell_{01} \ell_{03} =
- \ell_{01} \ell_{03}  \epsilon_{\nu \mn \nt \mt}
\nn
\\
\eta_{\nu \mt \nt \mn}=
&~\eta_{0312}= \eta_{1203}=  \eta_{2130}=  \eta_{3021}= -\ell_{01} \ell_{03}
= -\ell_{01} \ell_{03}  \epsilon_{\nu \mt \nt \mn}
\end{align}
with, as above, $\epsilon_{\alpha \beta \rho\sigma}$ the completely antisymmetric tensor with $\epsilon_{0123}=1$.
Moreover, for all $\alpha \neq \beta$ we have found
$$
\dx_\alpha \dx_\beta \dx_\alpha \dx_\beta= \eta_{\alpha \beta \alpha \beta}\omega ~,
$$ where now
\begin{align}\label{eta-p}
\eta_{\nu \nt \nu \nt}= -p_{\nu \nt}  \quad  \Rightarrow \quad 
&\eta_{0101}= -\eta_{1010}= - p_{01}\quad
&& \eta_{2323}= -\eta_{3232}=-p_{23} 
\nn
\\
\eta_{\nu \mn \nu \mn}= p_{\nu \mn} \quad  \Rightarrow \quad 
&\eta_{0202}=- \eta_{2020}= p_{02} \quad 
&& \eta_{1313}= -\eta_{3131}= p_{13}
\\
\eta_{\nu \mt \nu \mt}=-\ell_{01}p_{\nu \mt} \quad  \Rightarrow\quad 
&\eta_{0303}= -\eta_{3030}= -\ell_{01}p_{03} \quad 
&& \eta_{1212}=-\eta_{2121}=-\ell_{01}p_{12} \, . \nn
\end{align}

Hence the volume form $\mbox{vol}$ in \eqref{w0}
is  proportional to the  generator $\omega$ too. 
We next determine the  explicit coefficient of proportionality. By a comparison between  
\eqref{L-expl} with \eqref{eta-l} and between \eqref{P-expl} with \eqref{eta-p},
we observe that 
\beq
\eta_{\alpha \beta \rho \sigma} =- L_{\alpha \beta \rho \sigma} ~\varepsilon_{\alpha \beta \rho \sigma}
\quad ; \quad
\eta_{\alpha \beta \alpha \beta} =- P_{ \beta' \alpha' } ~\varepsilon_{\alpha \beta \alpha' \beta'}
\eeq
for all distinct indices $\alpha, \beta, \rho, \sigma$. We thus compute
\begin{align*}
\mbox{vol} 
&= \sum_{\alpha, \beta ,\rho, \sigma}
L_{\alpha \beta \rho \sigma}  ~ \varepsilon_{\alpha \beta \rho \sigma} \dd x_\alpha \dd x_\beta \dd x_\rho \dd x_\sigma  
+ \sum_{\alpha, \beta  } P_{\alpha \beta} ~  \varepsilon_{\alpha \beta \alpha' \beta'} \dd x_\alpha \dd x_\beta \dd x_\alpha \dd x_\beta 
\\
&= - \sum_{\alpha, \beta ,\rho, \sigma}
L^2_{\alpha \beta \rho \sigma}  ~ \varepsilon^2_{\alpha \beta \rho \sigma}  ~ \omega
- \sum_{\alpha, \beta  } P_{\alpha \beta} P_{ \beta' \alpha' } ~  \varepsilon^2_{\alpha \beta \alpha' \beta'}   ~ \omega
\\
&= - \sum_{\alpha \neq \beta} \big( 
L^2_{\alpha \beta \alpha' \beta'}  +  L^2_{\alpha \beta  \beta' \alpha'} 
+  P_{\alpha \beta} P_{ \beta' \alpha' }   \big) \omega
\\
&= - \sum_{\alpha \neq \beta} L^2_{\alpha \beta  \beta' \alpha'}  \big( 
\ell^2_{\alpha \beta}  +  1 
+  p_{ \beta' \alpha' }  p_{\alpha \beta}  \big) \omega
\\
&= - 2\sum_{\alpha \neq \beta} L^2_{\alpha \beta  \beta' \alpha'}~ \omega
\\
&= - 2 (8 + 4 \ell_{01}^2) ~ \omega
\end{align*}
where in the last equality we have used \eqref{L-expl} and the relation $\ell^2_{\alpha \beta}  
+  p_{ \beta' \alpha' }  p_{\alpha \beta}=1$.

\section{The quantum spheres $\slp$}\label{sec:spheres}

In this section we introduce quantum three-spheres as quotients of the algebras $\alp$.  To do that we study the center of the algebras $\alp$ and show that, under suitable conditions for the parameters $\ell_{\mu \nu}$ and $p_{\mu \nu}$, the quadratic element $R:= \sum x_\mu^2$ is central in $\alp$. 

\noindent
We start with the following preliminary result.
\begin{lem}\label{lem2}
For all $\mu,\nu \in \int$ fixed, it holds that:
\beq\label{xx}
x_{\mu} x_{\mu'} x_{\nu'} + x_{\nu'} x_{\mu'} x_{\mu} = \ell_{\mu \nu} \left( x_{\mu} x_{\nu'} x_{\mu'} +   x_{\mu'} x_{\nu'} x_{\mu} \right) .
\eeq
\end{lem}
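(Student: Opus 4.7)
The plan is to derive the identity by a direct calculation, rewriting the middle product $x_{\mu'}x_{\nu'}$ on the left-hand side of the first summand and $x_{\nu'}x_{\mu'}$ on the left-hand side of the second summand using the defining relations \eqref{comm-rel-x}, and then recognising that the two ``error terms'' cancel.

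For the first term of the left-hand side I would apply \eqref{comm-rel-x} to the pair of indices $(\mu',\nu')$. Since $(\mu')'=\mu$, $(\nu')'=\nu$, and $\ell_{\mu'\nu'}=\ell_{\mu\nu}$ by condition \eqref{l} of Definition~\ref{def:lp}, this gives
\[
x_{\mu'}x_{\nu'}=\ell_{\mu\nu}\,x_{\nu'}x_{\mu'}+p_{\mu'\nu'}\,x_{\nu}x_{\mu},
\]
and multiplying by $x_\mu$ on the left yields
\[
x_\mu x_{\mu'}x_{\nu'}=\ell_{\mu\nu}\,x_\mu x_{\nu'}x_{\mu'}+p_{\mu'\nu'}\,x_\mu x_\nu x_\mu.
\]
Symmetrically, for the second term I apply \eqref{comm-rel-x} to the pair $(\nu',\mu')$, again using $\ell_{\nu'\mu'}=\ell_{\mu\nu}$, obtaining
\[
x_{\nu'}x_{\mu'}=\ell_{\mu\nu}\,x_{\mu'}x_{\nu'}+p_{\nu'\mu'}\,x_\mu x_\nu,
\]
so that
\[
x_{\nu'}x_{\mu'}x_\mu=\ell_{\mu\nu}\,x_{\mu'}x_{\nu'}x_\mu+p_{\nu'\mu'}\,x_\mu x_\nu x_\mu.
\]

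Adding the two relations gives
\[
x_\mu x_{\mu'}x_{\nu'}+x_{\nu'}x_{\mu'}x_\mu=\ell_{\mu\nu}\bigl(x_\mu x_{\nu'}x_{\mu'}+x_{\mu'}x_{\nu'}x_\mu\bigr)+(p_{\mu'\nu'}+p_{\nu'\mu'})\,x_\mu x_\nu x_\mu,
\]
and the antisymmetry condition \eqref{p} of Definition~\ref{def:lp}, $p_{\mu'\nu'}=-p_{\nu'\mu'}$, kills the last term. This yields precisely \eqref{xx}.

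I expect no real obstacle here: the only thing to be careful about is bookkeeping of primed and unprimed indices and making sure one invokes the symmetry $\ell_{\mu'\nu'}=\ell_{\mu\nu}$ and the antisymmetry $p_{\mu\nu}=-p_{\nu\mu}$ at the right moments, together with the involutive property $(\sigma')'=\sigma$. The identity also holds trivially in the degenerate case $\mu=\nu$ (where $\mu'=\nu'=\mu=\nu$) since both sides then collapse to $2x_\mu^3=\ell_{\mu\mu}\cdot 2x_\mu^3$ with $\ell_{\mu\mu}=1$.
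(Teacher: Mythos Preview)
Your proof is correct and follows essentially the same strategy as the paper's: apply the defining relation \eqref{comm-rel-x} to a suitable pair, multiply once on the left and once on the right by the remaining generator, add, and use the antisymmetry of $p$ to kill the extra term. The only cosmetic difference is that the paper applies \eqref{comm-rel-x} to the pair $(\mu,\nu)$, multiplies by $x_{\mu'}$, obtains $x_{\mu'}x_\mu x_\nu + x_\nu x_\mu x_{\mu'} = \ell_{\mu\nu}(x_{\mu'}x_\nu x_\mu + x_\mu x_\nu x_{\mu'})$, and then performs the simultaneous swap $\mu\leftrightarrow\mu'$, $\nu\leftrightarrow\nu'$ at the end; you instead work directly with the pair $(\mu',\nu')$ and multiply by $x_\mu$, which yields \eqref{xx} without the final relabeling.
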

\begin{proof}
For this statement, we use \eqref{comm-rel-x}, that is
$x_\mu x_\nu = \ell_{\mu \nu} x_\nu x_\mu + p_{\mu\nu} x_{\nu'} x_{\mu'}$. 
Firstly, multiply it on the left by $x_{\mu'}$, thus getting
$$
x_{\mu'} x_\mu x_\nu = \ell_{\mu \nu} x_{\mu'} x_\nu x_\mu + p_{\mu\nu} x_{\mu'} x_{\nu'} x_{\mu'} \, .
$$
Then, exchange $\mu \leftrightarrow \nu$ and multiply it by $x_{\mu'}$ on the right, thus getting
$$
 x_\nu x_\mu x_{\mu'}= \ell_{\nu \mu}  x_\mu x_\nu x_{\mu'}
 + p_{\nu\mu}  x_{\mu'} x_{\nu'}x_{\mu'} \, . 
$$
By summing these two equalities, and using the antisymmetry of $p_{\mu\nu}$ we obtain
$$
x_{\mu'} x_\mu x_\nu + x_\nu x_\mu x_{\mu'} 
= \ell_{\mu \nu} \left( x_{\mu'} x_\nu x_\mu +   x_\mu x_\nu x_{\mu'} \right)
$$
or equivalently, by the simultaneous exchange $\mu \leftrightarrow \mu'$ and $\nu \leftrightarrow \nu'$,   equation 
\eqref{xx}.
\end{proof}

\begin{prop}\label{prop:R} 
Let $\ell_{\mu \nu}$ and $p_{\mu \nu}$ be parameters as in Definition~\ref{def:lp}. 
With the notation of the table~\eqref{tabella}, suppose that for each $\nu \in \int$ the parameters satisfy the relation 
\beq\label{condition-c}
\cc_\mn p_{\mn\nu} + \cc_\mt p_{\mt\nu} + \cc_\nt\ell_{\mt \nu} p_{\nt\nu} = 0 
\eeq
for some $\cc_\sigma \in \k[\ell_{\mu \nu}, p_{\mu \nu}]$, $\sigma \in \int$.
Then,
the 
element 
$
R_c:=\sum_{\mu=0}^3 \cc_\mu x_\mu^2
$ 
belongs to the center of  the algebra   $\alp$. 
\end{prop}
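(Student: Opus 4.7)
The plan is to verify that $[R_c, x_\nu] = 0$ for every generator $x_\nu$, $\nu \in \{0,1,2,3\}$; as commutation with $R_c$ is a derivation and the $x_\nu$ generate $\alp$, this places $R_c$ in the center.

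First, I would apply the defining relation \eqref{comm-rel-x} twice to the product $x_\mu \cdot (x_\mu x_\nu)$. Using condition \eqref{lp} of Definition~\ref{def:lp} in the form $\ell_{\mu\nu}^2 - 1 = -p_{\mu\nu} p_{\nu'\mu'}$, this yields the identity
\begin{equation*}
[x_\mu^2, x_\nu] \;=\; p_{\mu\nu}\bigl(\,x_\mu x_{\nu'} x_{\mu'} \,+\, \ell_{\mu\nu}\, x_{\nu'} x_{\mu'} x_\mu \,-\, p_{\nu'\mu'}\, x_\nu x_\mu^2\,\bigr),
\end{equation*}
which vanishes for $\mu = \nu$ since $p_{\nu\nu} = 0$. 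Summing with the weights $c_\mu$, the commutator $[R_c, x_\nu]$ receives contributions only from $\mu \in \{\nt, \mn, \mt\}$, and for each such $\mu$ the unordered triple $\{\mu, \mu', \nu'\}$ coincides with $\{\nt, \mn, \mt\}$, so every pure-cubic monomial that appears is a permutation of $x_\nt x_\mn x_\mt$.

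Next, I would reduce every cubic monomial appearing in $[R_c, x_\nu]$ to a chosen ordered basis of degree-three monomials. For the pure-cubic pieces $x_\mu x_{\nu'} x_{\mu'}$ and $x_{\nu'} x_{\mu'} x_\mu$, this reordering uses \eqref{comm-rel-x} on adjacent pairs, and Lemma~\ref{lem2}, derived from the same relations, supplies the three parameter-dependent identities among the six permutations of $x_\nt x_\mn x_\mt$ that arise from this reordering. Crucially, some of these reordering steps also feed extra contributions into the coefficients of the mixed monomials $x_\nu x_\mu^2$, because commuting a pair $x_{\nu'} x_{\mu'}$ may spawn an $x_\nu x_\mu$ factor via \eqref{comm-rel-x}; hence the direct $-p_{\nu'\mu'}\,x_\nu x_\mu^2$ terms in the commutator formula are not independent of the pure-cubic ones after reduction.

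Finally, equating to zero the coefficient of each ordered basis element in $[R_c, x_\nu]$ produces a small linear system in the three unknowns $c_\nt$, $c_\mn$, $c_\mt$. Using the parameter identity $p_{\mu\nu} p_{\nu'\mu'} = 1 - \ell_{\mu\nu}^2$ from \eqref{lp} together with the symmetries $\ell_{\mu'\nu'} = \ell_{\mu\nu}$ and $p_{\mu\nu} = -p_{\nu\mu}$, this system collapses to the single scalar relation \eqref{condition-c} for each $\nu$. The main obstacle is the bookkeeping of these reductions: the pure-cubic and the mixed contributions to each basis monomial must be combined correctly, and the compatibility of the apparently over-determined system (several coefficient-vanishings reducing to the one condition \eqref{condition-c}) must be explicitly verified using the full set of parameter identities in Definition~\ref{def:lp}.
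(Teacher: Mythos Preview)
Your plan is sound and would succeed, but it takes a noticeably longer route than the paper's. The key difference is the form of the commutator identity. You derive
\[
[x_\mu^2, x_\nu] \;=\; p_{\mu\nu}\bigl(x_\mu x_{\nu'} x_{\mu'} + \ell_{\mu\nu}\, x_{\nu'} x_{\mu'} x_\mu - p_{\nu'\mu'}\, x_\nu x_\mu^2\bigr),
\]
which carries a mixed term $x_\nu x_\mu^2$ and forces the ordered-basis bookkeeping you flag as the main obstacle. The paper instead computes $x_\mu^2 x_\nu$ by left-multiplying \eqref{comm-rel-x} by $x_\mu$, and $x_\nu x_\mu^2$ by right-multiplying the swapped relation by $x_\mu$; the common term $\ell_{\mu\nu}\,x_\mu x_\nu x_\mu$ cancels on subtraction, giving the cleaner
\[
[x_\mu^2, x_\nu] \;=\; p_{\mu\nu}\bigl(x_\mu x_{\nu'} x_{\mu'} + x_{\mu'} x_{\nu'} x_\mu\bigr).
\]
(Your formula reduces to this after one more use of \eqref{comm-rel-x}, since $x_{\mu'} x_{\nu'} x_\mu = \ell_{\mu\nu}\, x_{\nu'} x_{\mu'} x_\mu - p_{\nu'\mu'}\, x_\nu x_\mu^2$.)

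With this identity, the sum $\sum_\mu c_\mu [x_\mu^2, x_\nu]$ involves only the symmetric combinations $x_\mu x_{\nu'} x_{\mu'} + x_{\mu'} x_{\nu'} x_\mu$. For $\mu=\mn$ and $\mu=\mt$ these already coincide (both equal $x_\mn x_\nt x_\mt + x_\mt x_\nt x_\mn$), and a single application of Lemma~\ref{lem2} converts the $\mu=\nt$ contribution into $\ell_{\mt\nu}$ times the same expression. The result is a single scalar multiple of one fixed cubic, and that scalar is exactly \eqref{condition-c}. No reduction to an ordered basis and no over-determined system ever arise; the bookkeeping you anticipate is entirely avoidable.
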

\begin{proof} 
For each $\nu$ we need to show that 
$$
\sum_\mu \cc_\mu x^2_\mu x_\nu - x_\nu \sum_\mu \cc_\mu x^2_\mu = 0 .
$$
Let us start with \eqref{comm-rel-x} for fixed indices $\mu ,\nu$. Multiplying 
it from the left by $x_\mu$ we get
$$
x_\mu^2 x_\nu = \ell_{\mu \nu} x_\mu x_\nu x_\mu + p_{\mu\nu} x_\mu x_{\nu'} x_{\mu'} \, . 
$$ 
Then exchange $\mu \leftrightarrow \nu$ in \eqref{comm-rel-x} and multiply the result 
by $x_\mu$ on the right to obtain:
$$
 x_\nu x^2_\mu= \ell_{\nu \mu}  x_\mu x_\nu x_\mu+ p_{\nu\mu}  x_{\mu'} x_{\nu'}x_\mu \, .
$$
When comparing these two expressions we have: 
 $$
\cc_\mu (x_\mu^2 x_\nu - x_\nu x^2_\mu) =  \cc_\mu p_{\mu\nu} \left(x_\mu x_{\nu'} x_{\mu'}
+ x_{\mu'} x_{\nu'}x_\mu \right) .
$$
Thus for each index $\nu$, by using Lemma~\ref{lem1} on the possible values of the indices
(and recalling that $p_{\mu \mu}=0$), we arrive at:
\begin{align*} 
\sum_\mu \cc_\mu (x^2_\mu x_\nu - x_\nu  x^2_\mu) 
& =  \sum_{\mu\neq \nu}  \cc_\mu p_{\mu\nu} \left(x_\mu x_{\nu'} x_{\mu'}
+ x_{\mu'} x_{\nu'}x_\mu \right)
\\
& = \cc_\mn p_{\mn\nu} \left(x_\mn  x_\nt  x_\mt + x_\mt x_\nt x_\mn  \right)
+ \cc_\mt p_{\mt\nu} \left(x_\mt x_\nt  x_\mn  + x_\mn  x_\nt x_\mt \right) 
\\ &  
\qquad 
+ \cc_\nt p_{\nt\nu} \left(x_\nt  x_\mn  x_\mt + x_\mt x_\mn x_\nt  \right)
\\
& =
(\cc_\mn p_{\mn\nu} + \cc_\mt p_{\mt\nu} )\left(x_\mn  x_\nt  x_\mt + x_\mt x_\nt x_\mn  \right)
\\
&
\qquad +\cc_\nt p_{\nt\nu} \left(x_\nt  x_\mn  x_\mt + x_\mt x_\mn x_\nt  \right) .
\end{align*}
Formula \eqref{xx} for $\nu=\nu$ and $\mu=\mt$ leads to:
$$
x_\mt x_\mn x_\nt   + x_\nt  x_\mn  x_\mt
=\ell_{\mt \nu} \left(x_\mt x_\nt x_\mn  +x_\mn  x_\nt  x_\mt \right)
$$
having used that if $(\mu, \nu)=(\mt,\nu)$, then $(\mu',\nu')=(\mn, \nt)$, as from relations \eqref{primes}.
Hence 
$$ 
\sum_\mu \cc_\mu (x^2_\mu x_\nu - x_\nu  x^2_\mu)=
(\cc_\mn p_{\mn\nu} + \cc_\mt p_{\mt\nu} + \cc_\nt \ell_{\mt \nu} p_{\nt\nu}  )\left(x_\mn  x_\nt  x_\mt + x_\mt x_\nt x_\mn  \right) =0
$$
due to the hypothesis \eqref{condition-c} on the parameters.
\end{proof} 

Notice that for fixed parameters $\ell_{\mu \nu}$ and $p_{\mu \nu}$, there might be different coefficients $\cc_\sigma$ for which \eqref{condition-c} is satisfied. This is the case for instance for Sklyanin algebras (and Connes--Dubois-Violette planes), as shown in Lemma~\ref{lem:condlp} below. In particular, as a direct consequence of the above Proposition, we have 
\begin{cor}\label{cor-central}
Let  the parameters $\ell_{\mu \nu}$ and $p_{\mu \nu}$ satisfy the relation 
\beq\label{condition}
p_{\mn\nu} + p_{\mt\nu} + \ell_{\mt \nu} p_{\nt\nu} = 0 \, .
\eeq
Then, the quadratic element 
$
R:=\sum_{\mu=0}^3 x_\mu^2
$ 
belongs to the center of  the algebra   $\alp$. 
\\
Suppose in addition there exist coefficients  $\cc_\sigma \in \k[\ell_{\mu \nu}, p_{\mu \nu}]$, for 
$\sigma \in \int$ such that  
\beq\label{condition-p}
(\cc_\mn -\cc_\nt)p_{\mn \nu}+(\cc_\mt -\cc_\nt)p_{\mt \nu}=0 \; , \quad \forall \nu \in \int.
\eeq
Then, the quadratic element 
$
R_c:=\sum_{\mu=0}^3 \cc_\mu x_\mu^2
$ 
belongs to the center of  the algebra   $\alp$. 
\end{cor}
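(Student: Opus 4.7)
The plan is to deduce both claims directly from Proposition~\ref{prop:R} by verifying its hypothesis \eqref{condition-c} in each case.

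For the first claim, I would simply specialize Proposition~\ref{prop:R} to the uniform choice $\cc_\mu = 1$ for all $\mu \in \int$. With these coefficients, condition \eqref{condition-c} reduces precisely to the assumed relation \eqref{condition}, namely $p_{\mn\nu} + p_{\mt\nu} + \ell_{\mt\nu} p_{\nt\nu} = 0$ for every $\nu$. Hence Proposition~\ref{prop:R} applies and the quadratic element $R = \sum_\mu x_\mu^2$ lies in the center.

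For the second claim, the strategy is to show that, under the additional assumption \eqref{condition}, the hypothesis \eqref{condition-p} on the coefficients $\cc_\sigma$ is equivalent to (and in particular implies) the full condition \eqref{condition-c} required by Proposition~\ref{prop:R}. Concretely, I would multiply \eqref{condition} by $\cc_\nt$ to get
\begin{equation*}
\cc_\nt p_{\mn\nu} + \cc_\nt p_{\mt\nu} + \cc_\nt \ell_{\mt\nu} p_{\nt\nu} = 0
\end{equation*}
and add to it the assumption \eqref{condition-p}, written as $(\cc_\mn - \cc_\nt) p_{\mn\nu} + (\cc_\mt - \cc_\nt) p_{\mt\nu} = 0$. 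The $\cc_\nt$-contributions in the coefficients of $p_{\mn\nu}$ and $p_{\mt\nu}$ cancel and one obtains
\begin{equation*}
\cc_\mn p_{\mn\nu} + \cc_\mt p_{\mt\nu} + \cc_\nt \ell_{\mt\nu} p_{\nt\nu} = 0,
\end{equation*}
which is exactly \eqref{condition-c} for every $\nu \in \int$. A direct appeal to Proposition~\ref{prop:R} then yields that $R_c = \sum_\mu \cc_\mu x_\mu^2$ is central.

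Since all the substantive work has been carried out in Lemma~\ref{lem2} and Proposition~\ref{prop:R}, the argument is a purely bookkeeping matter and there is no real obstacle; the only care needed is to make sure the index conventions $(\mn, \mt, \nt)$ from the table \eqref{tabella} are used consistently when combining the two relations, and that the linear combination does eliminate precisely the $\cc_\nt$ pieces in front of $p_{\mn\nu}$ and $p_{\mt\nu}$.
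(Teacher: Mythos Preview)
Your proposal is correct and is exactly the argument the paper has in mind: the corollary is stated as ``a direct consequence of the above Proposition'' with no further proof, and you have spelled out precisely how each claim reduces to verifying the hypothesis \eqref{condition-c} of Proposition~\ref{prop:R}. The linear combination you use for the second claim (multiply \eqref{condition} by $\cc_\nt$ and add \eqref{condition-p}) is the intended bookkeeping step.
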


\begin{defi}
Let $\ell_{\mu \nu}$ and $p_{\mu \nu}$ as in Definition~\ref{def:lp} be constrained by the relation \eqref{condition},
so that the element $R:=\sum_{\mu=0}^3 x_\mu^2$ is central. We denote by \beq\label{slp}
\aslp:= \alp / \langle R-1 \rangle 
\eeq
the quotient of the algebra $\alp$ by the two-sided ideal generated by $R-1$. 
\end{defi}
We refer to $\slp $ as a quantum three-sphere with coordinate algebra $\aslp$.
In the `classical limit', where for each $\mu, \nu$ we take $\ell_{\mu \nu} = 1$ 
and $p_{\mu \nu}=0$, the algebra $\aslp$ reduces to the algebra of polynomial functions on a three-sphere. 

Using the result in \eqref{ells}, conditions \eqref{condition} read, 
with $\ell_{30} = \ell_{21} = \ell_{12}= \ell_{03}$, 
\begin{align}\label{cond-expl}
& p_{20} + p_{30} + \ell_{03} ~p_{10 }=0 \quad  ;  & p_{31} + p_{21} + \ell_{03} ~p_{01 }=0  \nn
\\
& p_{02} + p_{12} + \ell_{03}~ p_{ 32} =0 \quad ;   & p_{13} + p_{03} + \ell_{03} ~p_{23 } = 0 \, .
\end{align}
Only three of these conditions are necessary, the fourth one  
$p_{13} + p_{03} + \ell_{03} p_{23 } =0$ (say), follows from the other three identities by simple substitutions.
 
\bigskip
Conditions \eqref{condition} are verified  for parameters $\ell$'s and $p$'s of the quantum planes $\Au$ of \cite{cdv} that we have described in \S\ref{sec:cdv}:
\begin{lem}
Let the parameters $\ell_{\mu \nu}$ and $p_{\mu \nu}$ be as in \S\ref{sec:cdv} for the four-planes of \cite{cdv}. 
Then condition \eqref{condition} is satisfied for each $\nu \in \int$.
\end{lem}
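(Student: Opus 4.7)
The plan is a direct substitution of the expressions of \S\ref{sec:cdv} for the CDV parameters $\ell_{\mu\nu}$ and $p_{\mu\nu}$ into condition \eqref{condition}, followed by a reduction to a polynomial identity in the angular parameters $\lambda_\mu = e^{-2i\varphi_\mu}$. That polynomial identity will turn out to be a consequence of the sum rule $\sum_\mu c_{\mu\nu} = 0$ (noted after \eqref{abc}) together with the cyclic identity $A(B-C)+B(C-A)+C(A-B)=0$.

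The first step is to remove the common phase. From $p_{\mu\nu} = q_{\mu\nu}\, e^{i(\varphi_{\mu'} + \varphi_{\nu'} - \varphi_\mu - \varphi_\nu)}$ and the pairings in \eqref{primes}, with $\Phi := \sum_\mu \varphi_\mu$, each of the three phases appearing on the left-hand side of \eqref{condition} has the form $\Phi - 2\varphi_\nu - 2\varphi_\sigma$ for $\sigma \in \{\mn, \mt, \nt\}$. Pulling out $\lambda_\nu e^{i\Phi}$ and using $\ell_{\mt\nu} = \ell_{03}$ from \eqref{ells}, \eqref{condition} becomes equivalent to
\begin{equation*}
\lambda_\mn\, q_{\mn\nu} + \lambda_\mt\, q_{\mt\nu} + \ell_{03}\, \lambda_\nt\, q_{\nt\nu} = 0 .
\end{equation*}
Then, writing $q_{\mu\nu} = c_{\mu\nu}/a_{\mu\nu}$ and $\ell_{03} = b_{03}/a_{03}$, using \eqref{ab} together with the symmetry $a_{\mu\nu} = a_{\mu'\nu'} = a_{\nu\mu}$, one verifies that $a_{\mn\nu} = a_{\nt\nu} = a_{01}$ while $a_{\mt\nu} = a_{03}$ for every $\nu \in \int$; multiplying through by $a_{01} a_{03}$ yields the polynomial identity
\begin{equation*}
a_{03}\, \lambda_\mn\, c_{\mn\nu} + a_{01}\, \lambda_\mt\, c_{\mt\nu} + b_{03}\, \lambda_\nt\, c_{\nt\nu} = 0 .
\end{equation*}

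To finish, observe that the pair-partition $\{\nu, \mn\} \sqcup \{\mt, \nt\}$ of $\int$ is, by the table \eqref{tabella}, the same $\{\{0,2\},\{1,3\}\}$ for every $\nu$. This yields the $\nu$-independent decompositions $a_{03} = \lambda_\nu \lambda_\mn + \lambda_\mt \lambda_\nt$, $a_{01} = \lambda_\nu \lambda_\mt + \lambda_\mn \lambda_\nt$ and $b_{03} = \lambda_\nu \lambda_\nt + \lambda_\mn \lambda_\mt$. Substituting these into the previous display and regrouping splits the left-hand side as
\begin{equation*}
\lambda_\nu \big( \lambda_\mn^2 c_{\mn\nu} + \lambda_\mt^2 c_{\mt\nu} + \lambda_\nt^2 c_{\nt\nu} \big) + \lambda_\mn \lambda_\mt \lambda_\nt \big( c_{\mn\nu} + c_{\mt\nu} + c_{\nt\nu} \big) .
\end{equation*}
The second bracket is $\sum_{\mu \neq \nu} c_{\mu\nu} = 0$; the first is the cyclic identity applied to $(A,B,C) = (\lambda_\mn^2, \lambda_\mt^2, \lambda_\nt^2)$, in view of the cyclic forms $c_{\mn\nu} = \lambda_\mt^2 - \lambda_\nt^2$, $c_{\mt\nu} = \lambda_\nt^2 - \lambda_\mn^2$, $c_{\nt\nu} = \lambda_\mn^2 - \lambda_\mt^2$ that follow from \eqref{abc}. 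The main bookkeeping obstacle will be the consistent tracking of primed-index assignments under the four choices of $\nu$; the key structural fact that makes the argument uniform is the $\nu$-independence of the relevant partition of $\int$.
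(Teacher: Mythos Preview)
Your proof is correct. You and the paper share the same opening move---stripping the common phase to reduce \eqref{condition} to the equivalent statement
\[
\lambda_\mn\, q_{\mn\nu} + \lambda_\mt\, q_{\mt\nu} + \ell_{03}\,\lambda_\nt\, q_{\nt\nu} = 0
\]
---but from there the approaches diverge. The paper simply carries out the verification separately for $\nu=0,1,2$ (sketching only $\nu=0$ and leaving the rest as ``explicit computation''). You instead clear denominators uniformly via \eqref{ab}, exploit the $\nu$-independence of the partition $\{\nu,\mn\}\sqcup\{\mt,\nt\}=\{\{0,2\},\{1,3\}\}$ to rewrite $a_{03}$, $a_{01}$, $b_{03}$ in the symmetric forms $\lambda_\nu\lambda_\mn+\lambda_\mt\lambda_\nt$, etc., and then factor the resulting expression into the sum rule $\sum_\mu c_{\mu\nu}=0$ plus the elementary cyclic identity $A(B-C)+B(C-A)+C(A-B)=0$. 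This is genuinely cleaner: it treats all four values of $\nu$ at once and explains \emph{why} the identity holds rather than merely confirming it. The only residual bookkeeping is your claimed uniform expression $c_{\mn\nu}=\lambda_\mt^2-\lambda_\nt^2$ (and its two cyclic companions); these do hold for every $\nu$, since $\mn+\nu$ is always even and the prime assignment for the pair $(\mn,\nu)$ always sends $\mn\mapsto\mt$, $\nu\mapsto\nt$---a small check, but lighter than the paper's.
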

\begin{proof}
First observe that for each $\nu$ fixed, condition
$p_{\mn\nu} + p_{\mt\nu} + \ell_{\mt \nu} p_{\nt\nu}  =0$ is equivalent to 
$\lambda_\mn  q_{\mn\nu} + \lambda_\mt q_{\mt\nu} + \lambda_\nt  \ell_{\mt \nu} q_{\nt\nu}  =0$.
The proof is then by explicit computation for each of the three cases $\nu=0,1,2$. 
For instance, for $\nu=0$, using \eqref{def:l-p} and \eqref{ab}, 
\begin{align*}
p_{20} + p_{30} + \ell_{30} p_{10 }=0  
&\iff \lambda_2 q_{20} + \lambda_3 q_{30} + \lambda_1 \ell_{30} q_{10 } =0
\\
&\iff \lambda_2 a_{03} c_{20} + \lambda_3 a_{01} c_{30} + \lambda_1 b_{03} c_{10 } =0
\end{align*}
and this latter can be  proved with some algebra after substituting  \eqref{abc}. 
\end{proof}
For the sub-family in Example~\ref{ex:cl} it is even easier to see that \eqref{condition} is verified.

By  Proposition \ref{prop:R}, the above lemma gives that the element $R=\sum_{\mu=0}^3 x_\mu^2$ is central in the algebra $\Au$.  The corresponding quantum three-spheres $\slp$  are the noncommutative spherical manifolds $\Su$ introduced in \cite{cdv}. The centrality of the element $R$ was there deduced directly from the relations 
\eqref{rel-z}.

The centrality of the quadratic element $Q= -(\mathrm{x}_0)^2 + (\mathrm{x}_1)^2 + (\mathrm{x}_2)^2 + (\mathrm{x}_3)^2$ for the Sklyanin algebras in \S\ref{sec:skly} was originally mentioned in \cite[Thm.2]{Skly}, (c.f. also \cite[page 276]{smith}). In our setting, condition \eqref{cor-central} is verified for parameters $\ell_{\mu \nu}$ and $p_{\mu \nu} $ as in \eqref{lp-skly-C} characterizing the Sklyanin algebras over 
$\IC$ described in \S\ref{sec:skly-C}. Indeed, by direct computation one shows that:

\begin{lem}
The parameters $\ell_{\mu \nu}$ and $p_{\mu \nu}$  in \eqref{lp-skly-C}  satisfy condition \eqref{condition} and thus $\sum_\mu x_\mu^2$ is central in the corresponding algebra $\alp$. In particular for each $\nu=0,1,2,3$, condition \eqref{condition} is equivalent to
the Sklyanin condition  $\ell_{02}=\ell_{01}\ell_{03}$ (or the equivalent one in \eqref{cond-C}). 
\end{lem}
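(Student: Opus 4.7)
The plan is to verify condition \eqref{condition} for the Sklyanin parameters \eqref{lp-skly-C} by direct substitution, and to show that each of the four instances reduces (after clearing denominators) to the Sklyanin relation \eqref{cond-C}, namely $\alpha+\beta+\gamma+\alpha\beta\gamma=0$. By \eqref{ells} one has $\ell_{\mt\nu}=\ell_{03}$ for every $\nu$, so \eqref{condition} unpacks into the four explicit identities listed in \eqref{cond-expl}; as observed there, only three of them are independent.

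I would first dispatch the case $\nu=0$, namely $p_{20}+p_{30}+\ell_{03}\,p_{10}=0$. Substituting \eqref{lp-skly-C} and using $p_{\mu\nu}=-p_{\nu\mu}$, the left-hand side becomes
\[
-\frac{2\ii\beta}{1-\beta}-\frac{2\ii\gamma}{1+\gamma}-\frac{1-\gamma}{1+\gamma}\cdot\frac{2\ii\alpha}{1+\alpha}.
\]
Multiplying through by $-(1-\beta)(1+\gamma)(1+\alpha)/(2\ii)$ and expanding, all mixed quadratic monomials $\alpha\beta$, $\alpha\gamma$, $\beta\gamma$ cancel and what remains is exactly $\alpha+\beta+\gamma+\alpha\beta\gamma$. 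Thus \eqref{condition} for $\nu=0$ is equivalent to \eqref{cond-C}. The cases $\nu=1,2,3$ are handled in the same manner: each reduces, after clearing the analogous denominators, to the same polynomial up to a nonzero prefactor. One can also anticipate this from the symmetry of \eqref{cond-C} in $(\alpha,\beta,\gamma)$ and from the fact that the permutations of coordinate pairs that cycle the four conditions of \eqref{cond-expl} correspond to permutations of the three parameters.

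With \eqref{condition} verified, Corollary~\ref{cor-central} immediately yields that $R=\sum_\mu x_\mu^2$ lies in the center of $\alp$. For the equivalence with the Sklyanin identity $\ell_{02}=\ell_{01}\ell_{03}$, I invoke \eqref{cond-l}, already shown in \S\ref{sec:skly} to be equivalent to $\alpha+\beta+\gamma+\alpha\beta\gamma=0$; chaining this with the equivalence established above closes the loop. The only point requiring care is the sign bookkeeping from $p_{\mu\nu}=-p_{\nu\mu}$ when substituting \eqref{lp-skly-C}; beyond that the argument is a short polynomial identity.
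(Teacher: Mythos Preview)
Your proposal is correct and follows exactly the approach the paper intends: the paper's own proof is simply the phrase ``by direct computation one shows that'' preceding the lemma, with no further detail. Your explicit verification for $\nu=0$ is accurate (the cross terms indeed cancel to leave $\alpha+\beta+\gamma+\alpha\beta\gamma$), and your appeal to \eqref{cond-l} for the equivalence $\ell_{02}=\ell_{01}\ell_{03}\Leftrightarrow$ \eqref{cond-C} is justified, since the paper already records in \S\ref{sec:skly-C} that the new parameters \eqref{lp-skly-C} still satisfy \eqref{cond-l}.
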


\begin{rem}
It is worth noticing that the parameters $\ell_{\mu \nu}$ and $p_{\mu \nu}$ for the Sklyanin algebras as in \eqref{lp-skly} do not satisfy condition \eqref{condition}. Indeed the above lemma shows that in terms of the generators $\mathrm{x}_\mu$ of \S\ref{sec:skly}, the central element is rather
$-\mathrm{x}_0^2 + \mathrm{x}_1^2 + \mathrm{x}_2^2 +\mathrm{x}_3^2$.     
\end{rem}
Finally, as shown in \cite[Thm.2]{Skly} for the Sklyanin algebras there is a second central element. In parallel with this we also have the following result:
\begin{lem}\label{lem:condlp}
Let $\alp$ be the  algebra defined by the parameters $\ell_{\mu \nu}$ and $p_{\mu \nu}$  in \eqref{lp-skly-C}.
Set 
$$
\cc_0:=0 \quad; \quad \cc_1:=\ell_{03}(1+\ell_{01}) \quad; \quad  \cc_2:=(1+\ell_{02}) 
\quad; \quad \cc_3:=(1+\ell_{03}) \, .
$$ 
Then condition \eqref{condition-p} is satisfied for each index $\nu=0,1,2,3$ and thus the element
$$ \ell_{03}(1+\ell_{01}) x_1^2 + (1+\ell_{02})x_2^2+ (1+\ell_{03})x_3^2 $$
belongs to the center of the algebra $\alp$, in addition to the element 
$\sum_{\mu=0}^3 x_\mu^2$ .
\end{lem}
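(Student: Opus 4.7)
The plan is to invoke Corollary~\ref{cor-central}: given the proposed coefficients $\cc_0,\cc_1,\cc_2,\cc_3$, it suffices to verify that the linear condition \eqref{condition-p}
\[
(\cc_\mn -\cc_\nt)p_{\mn \nu}+(\cc_\mt -\cc_\nt)p_{\mt \nu}=0
\]
holds for each index $\nu\in\{0,1,2,3\}$, and then the claimed centrality follows automatically. (Centrality of $R=\sum_\mu x_\mu^2$ itself was already established earlier for these parameters via condition \eqref{condition}.) So the whole task is a finite check using the table \eqref{tabella} and the explicit values \eqref{lp-skly-C}.

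My first step is to precompute the three relevant differences under the Sklyanin constraint $\ell_{02}=\ell_{01}\ell_{03}$, namely
\[
\cc_2-\cc_1=1-\ell_{03},\qquad \cc_3-\cc_1=1-\ell_{02},\qquad \cc_1-\cc_2=\ell_{03}-1,
\]
\[
\cc_1-\cc_3=\ell_{02}-1,\qquad \cc_2-\cc_3=\ell_{02}-\ell_{03},\qquad \cc_3-\cc_2=\ell_{03}-\ell_{02},
\]
together with the simpler $\cc_j-\cc_0=\cc_j$ and $\cc_0-\cc_j=-\cc_j$ for $j=1,2,3$. In parallel I read off from \eqref{lp-skly-C} the signed entries $p_{\mu\nu}$ that appear, e.g.\ $p_{20}=\ii(1-\ell_{02})$, $p_{30}=\ii(\ell_{03}-1)$, $p_{21}=-\ii(1+\ell_{03})$, $p_{31}=\ii(1+\ell_{02})$, and so on.

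The second step is to plug these into \eqref{condition-p} for each $\nu$, reading $(\nt,\mn,\mt)$ from the table \eqref{tabella}. For $\nu=0$ one has $(\nt,\mn,\mt)=(1,2,3)$ and the equation becomes $(1-\ell_{03})p_{20}+(1-\ell_{02})p_{30}=0$, which after substitution is $\ii(1-\ell_{03})(1-\ell_{02})-\ii(1-\ell_{02})(1-\ell_{03})=0$. The case $\nu=1$, with $(\nt,\mn,\mt)=(0,3,2)$, reduces to $(1+\ell_{03})p_{31}+(1+\ell_{02})p_{21}=0$, again visibly zero after substitution. The cases $\nu=2$ and $\nu=3$ are entirely analogous: each time the two $\cc$-differences acquire complementary factors $(1\pm\ell_{02})$, $(1\pm\ell_{03})$, and the Sklyanin relation $\ell_{02}=\ell_{01}\ell_{03}$ makes them match exactly the factors already present in the $p_{\mu\nu}$, so the two terms cancel.

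There is no serious obstacle here: the computation is a direct verification in four cases, and the only point that requires any thought is bookkeeping of the index substitutions $(\mn,\mt,\nt)$ and the antisymmetry $p_{\mu\nu}=-p_{\nu\mu}$. Once the four equalities of \eqref{condition-p} are checked, Corollary~\ref{cor-central} immediately yields that $R_c=\cc_1 x_1^2+\cc_2 x_2^2+\cc_3 x_3^2$ is central, and linear independence from $R=\sum_\mu x_\mu^2$ is clear since $R_c$ contains no $x_0^2$ term while (generically) $\cc_1,\cc_2,\cc_3$ are nontrivial.
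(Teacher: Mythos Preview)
Your proposal is correct and follows essentially the same approach as the paper: both verify condition \eqref{condition-p} for each $\nu=0,1,2,3$ by direct substitution of the explicit values \eqref{lp-skly-C} and the table \eqref{tabella}, obtaining in each case two terms that visibly cancel. Your preliminary step of computing the differences $\cc_i-\cc_j$ using the Sklyanin constraint $\ell_{02}=\ell_{01}\ell_{03}$ is a helpful organizational device that the paper leaves implicit, but the underlying computation is the same.
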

\begin{proof}
The proof reduces to an explicit computation for each index $\nu=0,1,2,3$: 
\begin{align*}
\nu=0 &: \quad (1-\ell_{03})p_{20} + (1-\ell_{02})p_{30}
= -i p_{03}p_{20} +i p_{02}p_{30}=0
\\
\nu=1 &: \quad (1+\ell_{03})p_{31} + (1+\ell_{02})p_{21}
= -i p_{12}p_{31} +i p_{13}p_{21}=0
\\
\nu=2 &: \quad -(1+\ell_{03})p_{02} - (1-\ell_{02})p_{12}
= i p_{12}p_{02} -i p_{02}p_{12}=0
\\
\nu=3 &: \quad -(1-\ell_{03})p_{13} - (1+\ell_{02})p_{03}
= i p_{03}p_{13} -i p_{13}p_{03}=0 \, ,
\end{align*}
showing that condition \eqref{condition-p} is satisfied.
\end{proof}
 
The differential calculi $\omlp=\omlp(\alp)$ constructed in \S \ref{sec:calculus} can be restricted to the noncommutative spheres 
$\slp$. The differential graded algebra $\Omega(\slp)$ of the calculus on the sphere $\slp$ is  defined to be
the quotient of $\omlp$ by the differential ideal generated by $R-1$, equipped with the induced differential $\dd$. Explicitly, at first order, 
$$
\Omega^0(\slp):=\aslp=\omlp^0 / \langle \sum_\mu x_\mu^2-1 \rangle 
\, , \qquad
 \Omega^1(\slp):=\omlp^1 / \langle \sum_\mu x_\mu \dx_\mu \rangle \, ,
$$
with $x_\mu \dx_\mu = \dx_\mu x_\mu $ for each $\mu$, as from the relations \eqref{one-forms}.

\section{Symmetries of $\alp$ }\label{sec:mlp}

In Definition~\ref{def:lp} we have introduced a class of quadratic algebras $\alp$, associated to  parameters $\ell_{\mu \nu}$ and $p_{\mu \nu}$ satisfying some suitable conditions, and in \S\ref{sec:calculus} we have constructed their exterior algebras $(\omlp,\dd)$. 
In the present section we proceed with the study of the quadratic differential algebras $\alp$ and  construct transformation bialgebras for them.

\subsection{The symmetry bialgebra $\mlp$}
We aim at the construction of a bialgebra $\mlp$ and an algebra map
$\delta: \alp \to \mlp \ot \alp$ which defines a coaction of $\mlp$ on the quadratic differential algebra $\alp$. Recall that $\alp$ is the $\IN$-graded algebra, finitely presented with degree one generators $x_\mu$, $\mu=0,1,2,3$ and finite homogeneous relations \eqref{comm-rel-x} of degree 2: 
$$
\alp= \bigoplus_{n \in \mathbb{N}}
\alp^n = \k \langle x_0, x_1,x_2,x_3 \rangle / I \, , 
$$
where $I$ is the ideal of relations generated by  all quadratic relations 
\eqref{comm-rel-x}, and $\alp^0=\k$.
We determine the bialgebra $\mlp$ and the algebra map $\delta: \alp \to \mlp \ot \alp$ by  requiring:
\begin{enumerate}[(I)]
\item\label{cond1} \quad
$\delta$ is an algebra map that 
preserves the $\mathbb{N}$-grading: $\delta: \alp^n \to \mlp \ot \alp^n$ ,
\item\label{cond3} \quad 
$\delta$ extends to a coaction on the differential algebra $(\omlp,\dd)$ of $\alp$ by requiring 
\beq\label{dd}
\delta \circ \dd = (\id \ot \dd) \, \delta \; .
\eeq 
\end{enumerate}

Firstly, since $\delta$ should be an algebra map, it is determined by its value on the algebra generators of $\alp$. And since it has to respect the $\mathbb{N}$-grading of $\alp$, that is condition \eqref{cond1}, it has  to be of the form
\beq\label{coact}
\delta: \alp \to \mlp \ot \alp ~, \qquad x_\mu \mapsto \sum_\nu \A_{\mu \nu} \ot x_\nu
\eeq
for elements $\A_{\mu \nu}$  in 
$\mlp$, $\mu,\nu=0,1,2,3$. 
We wish $\mlp$ to be an algebra which is finitely generated by these elements $\A_{\mu \nu}$ and determine the minimal algebra properties 
they have to satisfy in order 
for $\delta: x_\mu \mapsto \sum_\nu \A_{\mu \nu} \ot x_\nu$
to preserve the ideal of relations: $\delta(I)=0$. For this, 
fix indices $\mu,\nu \in \int$ and apply the algebra map $\delta$ to the corresponding defining relation \eqref{comm-rel-x} to compute:
\begin{align}\label{deltaI}
\delta( x_\mu x_\nu - \ell_{\mu \nu} x_\nu  x_\mu & -  p_{\mu\nu}  x_{\nu'} x_{\mu'})
= 
\delta( x_\mu) \delta (x_\nu )- \ell_{\mu \nu} \delta (x_\nu) \delta ( x_\mu )-  p_{\mu\nu} \delta (x_{\nu'}) \delta (x_{\mu'}) \nn
\\ =& 
\sum_{\alpha, \beta} 
\left( \A_{\mu \alpha} \A_{\nu \beta} - 
\ell_{\mu \nu} \A_{\nu \alpha}\A_{\mu \beta} 
-  p_{\mu\nu} \A_{\nu' \alpha}\A_{\mu' \beta} 
  \right) \ot x_\alpha x_\beta \nn
\\
=& \sum_{\alpha} 
\left( \A_{\mu \alpha} \A_{\nu \alpha} - 
\ell_{\mu \nu} \A_{\nu \alpha}\A_{\mu \alpha} 
-  p_{\mu\nu} \A_{\nu' \alpha}\A_{\mu' \alpha} 
  \right) \ot x_\alpha^2 \nn
\\
& 
 \, + \sum_{\alpha < \beta} 
\left( \A_{\mu \alpha} \A_{\nu \beta} - 
\ell_{\mu \nu} \A_{\nu \alpha}\A_{\mu \beta} 
-  p_{\mu\nu} \A_{\nu' \alpha}\A_{\mu' \beta} 
  \right) \ot x_\alpha x_\beta \nn
\\
&
 \, +
\sum_{\alpha < \beta} 
\left( \A_{\mu \beta} \A_{\nu \alpha} - 
\ell_{\mu \nu} \A_{\nu \beta}\A_{\mu \alpha} 
-  p_{\mu\nu} \A_{\nu' \beta}\A_{\mu' \alpha} 
  \right)  \ot ~\ell_{\alpha \beta} x_\alpha x_\beta \nn
\\
&
 \, + 
\sum_{\alpha < \beta} 
\left( \A_{\mu \beta'} \A_{\nu \alpha'} - 
\ell_{\mu \nu} \A_{\nu \beta'}\A_{\mu \alpha'} 
-  p_{\mu\nu} \A_{\nu' \beta'}\A_{\mu' \alpha'} 
  \right) \ot p_{\beta' \alpha'} x_\alpha x_\beta  \, ,
\end{align}
where 
the last two summands have been obtained by expressing $x_\alpha x_\beta$ for $\alpha >\beta$  in terms of $x_\beta x_\alpha$ and $x_{\beta'} x_{\alpha'}$ via \eqref{comm-rel-x} and then renaming the indices.

As observed after \eqref{exre} the monomials $x_\alpha x_\beta$, $\alpha \leq \beta$ 
form a vector space basis for $\alp^2$. Thus, the map $\delta$ preserves the algebra relations, that is the right hand side of the expression in \eqref{deltaI} vanishes 
if and only if the coefficients of $x_\alpha^2$ 
and of $x_\alpha x_\beta$ for $\alpha \leq \beta$ vanish, that is the elements $\A_{\mu \nu}$  satisfy
\beq\label{id1}
\A_{\mu \alpha} \A_{\nu \alpha} =
\ell_{\mu \nu} \A_{\nu \alpha}\A_{\mu \alpha} 
+  p_{\mu\nu} \A_{\nu' \alpha}\A_{\mu' \alpha} 
\eeq
for all $\alpha$, $\mu$, $\nu$, and for all $\alpha<\beta$, and for all $\mu$, $\nu$:
\begin{align}\label{eq1}
0 = &\A_{\mu \alpha} \A_{\nu \beta} - 
\ell_{\mu \nu} \A_{\nu \alpha}\A_{\mu \beta} 
-  p_{\mu\nu} \A_{\nu' \alpha}\A_{\mu' \beta} 
+ \ell_{\alpha \beta} \A_{\mu \beta} \A_{\nu \alpha} - 
\ell_{\alpha \beta}\ell_{\mu \nu} \A_{\nu \beta}\A_{\mu \alpha} 
\nn
\\ & 
-  \ell_{\alpha \beta} p_{\mu\nu} \A_{\nu' \beta}\A_{\mu' \alpha}  
+  p_{\beta' \alpha'}  \A_{\mu \beta'} \A_{\nu \alpha'} 
 - 
p_{\beta' \alpha'}  \ell_{\mu \nu} \A_{\nu \beta'}\A_{\mu \alpha'} 
-  p_{\beta' \alpha'}  p_{\mu\nu} \A_{\nu' \beta'}\A_{\mu' \alpha'} \, .
\end{align} 
In fact, we could equally have expressed \eqref{deltaI} in terms of $x_\alpha x_\beta$ for $\alpha>\beta$ and the expression would have been the same. Thus, condition \eqref{eq1} should hold for all $\alpha,\beta$, in particular also for $\alpha=\beta$ since in this case it reduces to \eqref{id1}.

We next require that $\delta$ satisfies condition \eqref{cond3}. Property \eqref{dd} yields 
$\delta$ on one-forms:
$$
\delta: \dx_\mu \mapsto \sum_\nu \A_{\mu \nu} \ot \dx_\nu \; \mbox{ for all } \mu=0,1,2,3~ .
$$  
Extending $\delta$ to an algebra map on $\omlp$ which now preserves conditions 
\eqref{two-forms} on basis one-forms, by proceeding as before, we compute
\begin{align*}
0&=\delta( \dd x_\mu) \delta (\dd x_\nu ) +\ell_{\mu \nu} \delta (\dd x_\nu) \delta ( \dd x_\mu )+  p_{\mu\nu} \delta (\dd x_{\nu'}) \delta (\dd x_{\mu'})
\\
&= 
 \sum_{\alpha < \beta} 
\left( \A_{\mu \alpha} \A_{\nu \beta} 
+ 
\ell_{\mu \nu} \A_{\nu \alpha}\A_{\mu \beta} 
+  p_{\mu\nu} \A_{\nu' \alpha}\A_{\mu' \beta} 
  \right) \ot \dd x_\alpha \dd x_\beta 
\\
&
 \,\, -
\sum_{\alpha < \beta} 
\left( \A_{\mu \beta} \A_{\nu \alpha} + 
\ell_{\mu \nu} \A_{\nu \beta}\A_{\mu \alpha} 
+  p_{\mu\nu} \A_{\nu' \beta}\A_{\mu' \alpha} 
  \right)  \ot ~\ell_{\alpha \beta} \dd x_\alpha \dd x_\beta 
\\
&  
 \,\, -
 \sum_{\alpha < \beta} 
\left( \A_{\mu \beta'} \A_{\nu \alpha'} + 
\ell_{\mu \nu} \A_{\nu \beta'}\A_{\mu \alpha'} 
+  p_{\mu\nu} \A_{\nu' \beta'}\A_{\mu' \alpha'} 
  \right) \ot p_{\beta' \alpha'} \dd x_\alpha \dd x_\beta \, .
\end{align*}
That is, for each $\mu,\nu,\alpha \neq \beta$ 
(again with either the condition $\alpha < \beta$ or $\alpha > \beta$), we get
\begin{align}\label{eq2}
0 = &\A_{\mu \alpha} \A_{\nu \beta} + 
\ell_{\mu \nu} \A_{\nu \alpha}\A_{\mu \beta} 
+  p_{\mu\nu} \A_{\nu' \alpha}\A_{\mu' \beta} 
- \ell_{\alpha \beta} \A_{\mu \beta} \A_{\nu \alpha}  
- \ell_{\alpha \beta}\ell_{\mu \nu} \A_{\nu \beta}\A_{\mu \alpha} 
\nn
\\ &
-  \ell_{\alpha \beta} p_{\mu\nu} \A_{\nu' \beta}\A_{\mu' \alpha}  
-  p_{\beta' \alpha'}  \A_{\mu \beta'} \A_{\nu \alpha'} 
-p_{\beta' \alpha'}  \ell_{\mu \nu} \A_{\nu \beta'}\A_{\mu \alpha'} 
- p_{\beta' \alpha'}  p_{\mu\nu} \A_{\nu' \beta'}\A_{\mu' \alpha'} .
\end{align} 
Comparing \eqref{eq2} with equation \eqref{eq1} obtained for the zero-forms part, 
we have that in order for these equations to be satisfied the elements $\A_{\mu \nu}$ have to fulfil the conditions 
\begin{align} \label{cond-equiv}
 \A_{\mu \alpha} \A_{\nu \beta} 
 - \ell_{\alpha \beta}\ell_{\mu \nu} \A_{\nu \beta}\A_{\mu \alpha} 
& -  \ell_{\alpha \beta} p_{\mu\nu} \A_{\nu' \beta}\A_{\mu' \alpha} 
\nn \\ & \qquad - p_{\beta' \alpha'}  \ell_{\mu \nu} \A_{\nu \beta'}\A_{\mu \alpha'} 
-  p_{\beta' \alpha'}  p_{\mu\nu} \A_{\nu' \beta'}\A_{\mu' \alpha'} = 0 \, ,
\nn \\
~ \nn \\
 - \ell_{\mu \nu} \A_{\nu \alpha}\A_{\mu \beta} 
-  p_{\mu\nu} \A_{\nu' \alpha}\A_{\mu' \beta} 
& + \ell_{\alpha \beta} \A_{\mu \beta} \A_{\nu \alpha} 
+  p_{\beta' \alpha'}  \A_{\mu \beta'} \A_{\nu \alpha'}  =0 \, . 
\end{align}

In Proposition \ref{prop:comm-rel-equiv} below, we will show that these two conditions are equivalent, that is, it is enough that one of them is satisfied for the other to be satisfied as well. We are hence led to give the following definition.

\begin{defi}\label{def:mlp}
Let $\mlp$ be the associative $\mathbb{N}$-graded algebra  generated in degree-one by elements $\A_{\mu \alpha}$, $\mu,\alpha=0,1,2,3$ and defining relations in degree-two
\beq\label{comm-rel-A}
\A_{\mu \alpha}\A_{\nu \beta}= \ell_{\mu \nu} \ell_{\beta \alpha } \A_{\nu \beta}\A_{\mu \alpha} +
p_{\mu \nu} \ell_{\beta \alpha } \A_{\nu' \beta}\A_{\mu' \alpha} +
\ell_{\mu \nu} p_{\beta' \alpha'} \A_{\nu \beta'}\A_{\mu \alpha'} +
p_{\mu \nu} p_{\beta' \alpha'} \A_{\nu' \beta'}\A_{\mu' \alpha'} 
\eeq
for all $\mu,\nu,\alpha, \beta \in \int$, and where $(\mu',\nu')=(\mu,\nu)'$ and
$(\alpha',\beta')=(\alpha,\beta)'$. 
\end{defi}
In the spirit of quantized  algebras of coordinate functions on matrix groups, we can think at $\mlp$ as the algebra generated by the entries (coordinate functions) of  a  matrix 
$M=(\A_{\mu \nu}, \, \mu, \nu =0,1,2,3)$ subject to the commutation relations \eqref{comm-rel-A}.

The relations \eqref{comm-rel-A} do not generate additional relations in degree-two in the sense that 
by  applying them twice we return to the monomial we started from. 
Using the defining conditions on the parameters: 
$\ell_{\mu \nu}= \ell_{\nu \mu}=\ell_{\mu' \nu'}= \ell_{\nu' \mu'}$ and $p_{\mu \nu}= -p _{\nu \mu}$ we compute
\begin{align*}
&\A_{\mu \alpha}\A_{\nu \beta} =
\\
&=  \ell_{\mu \nu} \ell_{\beta \alpha } 
\big(
\ell_{\mu \nu} \ell_{\beta \alpha } \A_{\mu \alpha} \A_{\nu \beta}  
+
p_{\nu \mu} \ell_{\beta \alpha } \A_{\mu' \alpha}  \A_{\nu' \beta}
 + \ell_{\mu \nu} p_{\alpha'\beta' }\A_{\mu \alpha'} \A_{\nu \beta'} 
+
p_{\nu \mu} p_{\alpha' \beta' } \A_{\mu' \alpha'} \A_{\nu' \beta'} 
\big)
\\[3pt]&
+ p_{\mu \nu} \ell_{\beta \alpha } 
\big(
\ell_{\mu \nu} \ell_{\beta \alpha } \A_{\mu' \alpha} \A_{\nu' \beta}  +
p_{\nu' \mu'} \ell_{\beta \alpha } \A_{\mu \alpha}  \A_{\nu \beta}  
 + \ell_{\mu \nu} p_{\alpha'\beta' }\A_{\mu' \alpha'} \A_{\nu' \beta'} +
p_{\nu' \mu'} p_{\alpha' \beta' } \A_{\mu \alpha'} \A_{\nu \beta'} 
\big)
\\[3pt]&
+ \ell_{\mu \nu} p_{\beta' \alpha' } 
\big(
\ell_{\mu \nu} \ell_{\beta \alpha } \A_{\mu \alpha'} \A_{\nu \beta'}  +
p_{\nu \mu} \ell_{\beta \alpha} \A_{\mu' \alpha'}  \A_{\nu' \beta'} 
 +
\ell_{\mu \nu} p_{\alpha\beta }\A_{\mu \alpha} \A_{\nu \beta} +
p_{\nu \mu} p_{\alpha \beta } \A_{\mu' \alpha} \A_{\nu' \beta} 
\big)
\\[3pt]&
+ p_{\mu \nu} p_{\beta' \alpha' } 
\big(
\ell_{\mu \nu} \ell_{\beta \alpha } \A_{\mu' \alpha'} \A_{\nu' \beta'}  +
p_{\nu' \mu'} \ell_{\beta \alpha } \A_{\mu \alpha'}  \A_{\nu \beta'}
 +
\ell_{\mu \nu} p_{\alpha\beta }\A_{\mu' \alpha} \A_{\nu' \beta} +
p_{\nu' \mu'} p_{\alpha \beta } \A_{\mu \alpha} \A_{\nu \beta} 
\big)
\\[3pt]& 
=  
\big( \ell_{\mu \nu}^2 \ell_{\beta \alpha }^2+ p_{\mu \nu} \ell_{\beta \alpha }^2  p_{\nu' \mu'}  +  \ell_{\mu \nu}^2 p_{\beta' \alpha' } p_{\alpha\beta } +
p_{\mu \nu} p_{\beta' \alpha' } p_{\nu' \mu'} p_{\alpha \beta }
\big) \A_{\mu \alpha}  \A_{\nu \beta} 
\\[3pt]&
+ \big( \ell_{\mu \nu} \ell_{\beta \alpha }^2 p_{\nu \mu} +
\ell_{\beta \alpha }^2 \ell_{\mu \nu} p_{\mu \nu} 
 + \ell_{\mu \nu} p_{\beta' \alpha' }  p_{\nu \mu} p_{\alpha \beta } 
+ p_{\mu \nu} p_{\beta' \alpha' }\ell_{\mu' \nu'} p_{\alpha\beta }
\big) \A_{\mu' \alpha} \A_{\nu' \beta} 
\\[3pt]&
+ \big(\ell_{\mu \nu}^2 \ell_{\beta \alpha } p_{\alpha'\beta' }+
p_{\mu \nu} \ell_{\beta \alpha } p_{\nu' \mu'} p_{\alpha' \beta' }
 + \ell_{\mu \nu}^2 p_{\beta' \alpha' } \ell_{\beta \alpha } 
+p_{\mu \nu} p_{\beta' \alpha' } p_{\nu' \mu'} \ell_{\beta \alpha}
\big)\A_{\mu \alpha'} \A_{\nu \beta'} 
\\[3pt]&
+ \big(\ell_{\mu \nu} \ell_{\beta \alpha }  p_{\nu \mu} p_{\alpha' \beta' }+
p_{\mu \nu} \ell_{\beta \alpha } \ell_{\mu \nu} p_{\alpha'\beta' } 
 + \ell_{\mu \nu} p_{\beta' \alpha' }p_{\nu \mu} \ell_{\beta \alpha}
+ p_{\mu \nu} p_{\beta' \alpha' } \ell_{\mu \nu} \ell_{\beta \alpha }
\big)\A_{\mu' \alpha'} \A_{\nu' \beta'} 
\\[3pt]&
=   
\big( \ell_{\mu \nu}^2 \ell_{\beta \alpha }^2+ p_{\mu \nu} \ell_{\beta \alpha }^2  p_{\nu' \mu'}
 +  \ell_{\mu \nu}^2 p_{\beta' \alpha' } p_{\alpha\beta } +
p_{\mu \nu} p_{\beta' \alpha' } p_{\nu' \mu'} p_{\alpha \beta }
\big) \A_{\mu \alpha}  \A_{\nu \beta}
\end{align*}
where in the last step we have simply used the properties of the $\ell$'s and the $p$'s 
to conclude that all coefficients vanish, but for the first one. Finally, by using\eqref{lp} in 
Definition \ref{def:lp}, the coefficient in parenthesis is worked out to be just
\begin{align*}
\ell_{\mu \nu}^2 \ell_{\beta \alpha }^2+ p_{\mu \nu} \ell_{\beta \alpha }^2  p_{\nu' \mu'}+ & \ell_{\mu \nu}^2 p_{\beta' \alpha' } p_{\alpha\beta } +
p_{\mu \nu} p_{\beta' \alpha' } p_{\nu' \mu'} p_{\alpha \beta }
 \\& = 
\ell_{\beta \alpha }^2 \left(\ell_{\mu \nu}^2 + p_{\mu \nu}   p_{\nu' \mu'} \right)+ 
p_{\beta' \alpha' } p_{\alpha\beta }  \left( \ell_{\mu \nu}^2 +
p_{\mu \nu}  p_{\nu' \mu'} \right)
\\
& =  \ell_{\beta \alpha }^2  + p_{\beta' \alpha' } p_{\alpha\beta }  =1 \, .
\end{align*}
This gives again $\A_{\mu \alpha}  \A_{\nu \beta}$ for the right hand side. 

We next show that the two conditions in \eqref{cond-equiv} are equivalent.
\begin{prop}\label{prop:comm-rel-equiv}
The generators $\A_{\mu \alpha}$ of $\mlp$ satisfy conditions \eqref{comm-rel-A}
if and only if, for all $\mu,\nu,\alpha, \beta \in \int$, with $(\mu',\nu')=(\mu,\nu)'$ and
$(\alpha',\beta')=(\alpha,\beta)'$, they satisfy
\beq\label{rel-A}
\ell_{\alpha \beta} \A_{\mu \alpha} \A_{\nu \beta} 
+
 p_{\alpha' \beta'}  \A_{\mu \alpha'} \A_{\nu \beta'} =
\ell_{\mu \nu} \A_{\nu \beta}\A_{\mu \alpha} 
+  p_{\mu\nu} \A_{\nu' \beta}\A_{\mu' \alpha} \, .
\eeq 
\end{prop}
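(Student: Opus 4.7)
The plan is to prove the equivalence by a short $2\times 2$ linear-algebra argument indexed by the ``prime'' involution $(\alpha,\beta)\mapsto(\alpha',\beta')$. Applying this involution to \eqref{rel-A}, and using $\ell_{\alpha'\beta'}=\ell_{\alpha\beta}$ from Definition~\ref{def:lp}, produces the companion relation
\begin{equation*}
\ell_{\alpha\beta}\A_{\mu\alpha'}\A_{\nu\beta'}+p_{\alpha\beta}\A_{\mu\alpha}\A_{\nu\beta}
=\ell_{\mu\nu}\A_{\nu\beta'}\A_{\mu\alpha'}+p_{\mu\nu}\A_{\nu'\beta'}\A_{\mu'\alpha'}.
\end{equation*}
Taken together, \eqref{rel-A} and this companion form a linear system in the pair $\bigl(\A_{\mu\alpha}\A_{\nu\beta},\,\A_{\mu\alpha'}\A_{\nu\beta'}\bigr)$ whose coefficient matrix has determinant $\ell_{\alpha\beta}^2-p_{\alpha'\beta'}p_{\alpha\beta}=\ell_{\alpha\beta}^2+p_{\beta'\alpha'}p_{\alpha\beta}=1$, by antisymmetry of $p$ together with condition \eqref{lp} of Definition~\ref{def:lp}.

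Solving this system by Cramer's rule and reading off the first unknown, I would obtain
\begin{equation*}
\A_{\mu\alpha}\A_{\nu\beta}=\ell_{\alpha\beta}\bigl(\ell_{\mu\nu}\A_{\nu\beta}\A_{\mu\alpha}+p_{\mu\nu}\A_{\nu'\beta}\A_{\mu'\alpha}\bigr)-p_{\alpha'\beta'}\bigl(\ell_{\mu\nu}\A_{\nu\beta'}\A_{\mu\alpha'}+p_{\mu\nu}\A_{\nu'\beta'}\A_{\mu'\alpha'}\bigr),
\end{equation*}
which is exactly \eqref{comm-rel-A} after the substitutions $\ell_{\alpha\beta}=\ell_{\beta\alpha}$ and $-p_{\alpha'\beta'}=p_{\beta'\alpha'}$; this gives the direction \eqref{rel-A}$\Rightarrow$\eqref{comm-rel-A}. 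For the converse, I would substitute \eqref{comm-rel-A} and its $(\alpha,\beta)\mapsto(\alpha',\beta')$ companion into the combination $\ell_{\alpha\beta}\A_{\mu\alpha}\A_{\nu\beta}+p_{\alpha'\beta'}\A_{\mu\alpha'}\A_{\nu\beta'}$ and collect by monomials: the coefficients of $\A_{\nu\beta}\A_{\mu\alpha}$ and $\A_{\nu'\beta}\A_{\mu'\alpha}$ collapse to $\ell_{\mu\nu}$ and $p_{\mu\nu}$ via another use of \eqref{lp}, while those of the ``cross'' monomials $\A_{\nu\beta'}\A_{\mu\alpha'}$ and $\A_{\nu'\beta'}\A_{\mu'\alpha'}$ carry the factor $p_{\beta'\alpha'}+p_{\alpha'\beta'}$ and therefore vanish by antisymmetry, yielding \eqref{rel-A}.

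The only genuine obstacle is notational bookkeeping: with four indices $\mu,\nu,\alpha,\beta$ and all of their primes simultaneously at play, and with both the symmetry $\ell_{\alpha'\beta'}=\ell_{\alpha\beta}$ and the antisymmetry $p_{\alpha'\beta'}=-p_{\beta'\alpha'}$ in use, it is easy to misalign an index in a substitution. Conceptually, however, the proposition is simply a linear-algebraic restatement of the axiom $\ell_{\alpha\beta}^2+p_{\alpha\beta}p_{\beta'\alpha'}=1$, so no new structural input beyond Definition~\ref{def:lp} is needed.
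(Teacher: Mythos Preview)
Your argument is correct, and it is organized more transparently than the paper's own proof. The paper establishes \eqref{comm-rel-A}$\Rightarrow$\eqref{rel-A} by substituting \eqref{comm-rel-A} into the two terms $\A_{\nu\alpha}\A_{\mu\beta}$ and $\A_{\nu'\alpha}\A_{\mu'\beta}$ of the expression $-\ell_{\mu\nu}\A_{\nu\alpha}\A_{\mu\beta}-p_{\mu\nu}\A_{\nu'\alpha}\A_{\mu'\beta}+\ell_{\alpha\beta}\A_{\mu\beta}\A_{\nu\alpha}+p_{\beta'\alpha'}\A_{\mu\beta'}\A_{\nu\alpha'}$ and expanding until every coefficient is seen to vanish; for the converse it groups the right-hand side of \eqref{comm-rel-A} as $\ell_{\beta\alpha}(\cdots)+p_{\beta'\alpha'}(\cdots)$ and applies \eqref{rel-A} to each bracket, which is exactly your Cramer step written out by hand. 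What you add is the observation that the prime involution $(\alpha,\beta)\mapsto(\alpha',\beta')$ pairs each relation with a companion, and that the resulting $2\times2$ coefficient matrix has determinant $\ell_{\alpha\beta}^2+p_{\alpha\beta}p_{\beta'\alpha'}=1$ by condition~\eqref{lp}; this makes the equivalence a formal consequence of the invertibility of that matrix, rather than something to be checked term by term. The computations you perform are literally the same as the paper's, but your framing explains \emph{why} they work and makes clear that no structural input beyond Definition~\ref{def:lp} is needed.
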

\begin{proof}
Firstly we show that equations \eqref{comm-rel-A} imply that
$$
 - 
\ell_{\mu \nu} \A_{\nu \alpha}\A_{\mu \beta} 
-  p_{\mu\nu} \A_{\nu' \alpha}\A_{\mu' \beta} 
+ \ell_{\alpha \beta} \A_{\mu \beta} \A_{\nu \alpha} 
+
 p_{\beta' \alpha'}  \A_{\mu \beta'} \A_{\nu \alpha'} 
$$ 
is zero for all $\mu,\nu,\alpha,\beta$. Indeed, by applying \eqref{comm-rel-A} to the first 
two addends of the above expression,  this is modified to
\begin{align*}
& - \ell_{\mu \nu} \big(  
\ell_{\mu \nu} \ell_{\alpha \beta}  \A_{\mu \beta}  \A_{\nu \alpha}
+
p_{\nu \mu} \ell_{\beta \alpha } \A_{\mu' \beta}\A_{\nu' \alpha} 
+ \ell_{\nu \mu} p_{\beta' \alpha'} \A_{\mu \beta'}\A_{\nu \alpha'} 
+
 p_{\nu \mu} p_{\beta' \alpha'} \A_{\mu' \beta'}\A_{\nu' \alpha'} \big)
\\ 
& - p_{\mu\nu}  \big(  
\ell_{\nu \mu} \ell_{\beta \alpha } \A_{\mu' \beta}\A_{\nu' \alpha} 
 +   p_{\nu' \mu'} \ell_{\beta \alpha } \A_{\mu \beta}\A_{\nu \alpha} 
+  \ell_{\nu \mu} p_{\beta' \alpha'} \A_{\mu' \beta'}\A_{\nu' \alpha'}
 +
 p_{\nu' \mu'} p_{\beta' \alpha'} \A_{\mu \beta'}\A_{\nu \alpha'} 
\big) 
\\ 
&
+  \ell_{\alpha \beta} \A_{\mu \beta} \A_{\nu \alpha} 
 + p_{\beta' \alpha'}  \A_{\mu \beta'} \A_{\nu \alpha'} 
\\
= & -
\big(
\ell_{\mu \nu}^2 \ell_{\alpha \beta}  +  p_{\mu\nu} 
p_{\nu' \mu'} \ell_{\beta \alpha } -
\ell_{\alpha \beta}
\big )\A_{\mu \beta}  \A_{\nu \alpha}  
 -
\big(  
\ell_{\nu \mu} p_{\nu \mu} \ell_{\beta \alpha } +  p_{\mu\nu} 
\ell_{\nu' \mu'} \ell_{\beta \alpha } \big) \A_{\mu' \beta}\A_{\nu' \alpha}
\\
&-
\big( 
\ell_{\nu \mu}^2 p_{\beta' \alpha'} +  p_{\mu\nu} 
p_{\nu' \mu'} p_{\beta' \alpha'}  -
 p_{\beta' \alpha'} \big) \A_{\mu \beta'}\A_{\nu \alpha'} 
  -
\big( 
\ell_{\nu \mu} p_{\nu \mu} p_{\beta' \alpha'} +  p_{\mu\nu} 
\ell_{\nu' \mu'} p_{\beta' \alpha'} \big) \A_{\mu' \beta'}\A_{\nu' \alpha'} 
\end{align*}
which vanishes since all coefficients are zero.

We are left to prove the converse: relations \eqref{rel-A} imply \eqref{comm-rel-A}, that is that
$$
\ell_{\mu \nu} \ell_{\beta \alpha } \A_{\nu \beta}\A_{\mu \alpha} +
p_{\mu \nu} \ell_{\beta \alpha } \A_{\nu' \beta}\A_{\mu' \alpha} +
\ell_{\mu \nu} p_{\beta' \alpha'} \A_{\nu \beta'}\A_{\mu \alpha'} +
p_{\mu \nu} p_{\beta' \alpha'} \A_{\nu' \beta'}\A_{\mu' \alpha'} = \A_{\mu \alpha}\A_{\nu \beta}.
$$
By using \eqref{rel-A}  twice we can rewrite the left hand side as
\begin{align*}
& ~ 
\ell_{\beta \alpha }  \big( \ell_{\mu \nu} \A_{\nu \beta}\A_{\mu \alpha} +
p_{\mu \nu}  \A_{\nu' \beta}\A_{\mu' \alpha} \big) +
p_{\beta' \alpha'}  \big(
\ell_{\mu \nu} \A_{\nu \beta'}\A_{\mu \alpha'} +
p_{\mu \nu} \A_{\nu' \beta'}\A_{\mu' \alpha'}  \big)
\\
  = & 
~\ell_{\beta \alpha }  \big(
\ell_{\beta \alpha} \A_{\mu \alpha} \A_{\nu \beta} 
+
 p_{\alpha' \beta'}  \A_{\mu \alpha'} \A_{\nu \beta'}
 \big) +
p_{\beta' \alpha'}  
\big(
\ell_{\beta \alpha} \A_{\mu \alpha'} \A_{\nu \beta'} 
+
 p_{\alpha \beta}  \A_{\mu \alpha} \A_{\nu \beta}
  \big)
\\
 = &  ~
\big(\ell_{\beta \alpha } \ell_{\beta \alpha} + p_{\beta' \alpha'}  p_{\alpha \beta}  \big) \A_{\mu \alpha} \A_{\nu \beta} 
\end{align*}
which is indeed equal to $
\A_{\mu \alpha} \A_{\nu \beta} $ because of condition \eqref{lp} in Definition~\ref{def:lp}. 
\end{proof}

Before to proceed and introduce a coalgebra structure for $\mlp$ let us observe the following.
By comparing \eqref{comm-rel-x} with \eqref{comm-rel-A}, we can immediately conclude that
\begin{lem}\label{lem:obs1}
For each $\alpha \in \int$ fixed, the map
\beq
\mlp \to \alp \; , \quad \A_{\mu \alpha} \mapsto x_\mu
\eeq
is an algebra isomorphism between the subalgebra of $\mlp$ generated by the
elements $\{\A_{\mu \alpha}\}_{\mu \in \int}$ in the $\alpha$-th column of the matrix $M$ 
and the algebra $\alp$.  
\end{lem}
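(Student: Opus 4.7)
The plan is to exhibit mutually inverse algebra maps between $\alp$ and the subalgebra $B_\alpha := \langle \A_{\mu \alpha} : \mu \in \int \rangle \subset \mlp$. First I would observe that specialising the $\mlp$-relation \eqref{comm-rel-A} to $\beta = \alpha$, and using both the convention that $\alpha' = \alpha$ whenever the pair of column indices coincides and the identities $\ell_{\alpha\alpha}=1$, $p_{\alpha\alpha}=0$ from Definition~\ref{def:lp}, collapses the four-term right-hand side to
\[
\A_{\mu \alpha}\A_{\nu \alpha} \;=\; \ell_{\mu \nu}\, \A_{\nu \alpha}\A_{\mu \alpha} \;+\; p_{\mu\nu}\, \A_{\nu' \alpha}\A_{\mu' \alpha},
\]
which is formally identical to the defining relation \eqref{comm-rel-x} of $\alp$. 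Hence the assignment $x_\mu \mapsto \A_{\mu \alpha}$ extends to a well-defined surjective algebra homomorphism $\psi_\alpha \colon \alp \twoheadrightarrow B_\alpha$.

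To prove injectivity of $\psi_\alpha$ I would construct a left inverse by defining an algebra map $\phi_\alpha \colon \mlp \to \alp$ on generators by $\phi_\alpha(\A_{\mu \beta}) := \delta_{\beta \alpha}\, x_\mu$ and verifying that the full set of relations \eqref{comm-rel-A} is preserved. The check splits according to whether the two column indices occurring in a given instance of \eqref{comm-rel-A} (call them $\gamma$ and $\delta$, to avoid conflict with the fixed $\alpha$) agree or not. If $\gamma = \delta$ the convention forces $\gamma' = \delta' = \gamma$, so \eqref{comm-rel-A} becomes $\delta_{\gamma \alpha}$ times exactly the relation \eqref{comm-rel-x} already holding in $\alp$, after using $\ell_{\gamma\gamma}=1$ and $p_{\gamma\gamma}=0$. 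If $\gamma \neq \delta$, then also $\gamma' \neq \delta'$, so neither of the products $\delta_{\gamma \alpha}\delta_{\delta \alpha}$ nor $\delta_{\gamma' \alpha}\delta_{\delta' \alpha}$ can be nonzero, and each of the four terms on the right of \eqref{comm-rel-A} is sent to zero independently; the left-hand side is zero too. Thus $\phi_\alpha$ is a well-defined algebra homomorphism, and by construction $\phi_\alpha \circ \psi_\alpha = \id_{\alp}$ on generators, hence on all of $\alp$. Combined with surjectivity of $\psi_\alpha$ onto $B_\alpha$, this delivers the desired isomorphism.

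There is no substantial obstacle: the result is essentially a tautology once one observes that the $\beta=\alpha$ specialisation of \eqref{comm-rel-A} coincides with \eqref{comm-rel-x}. The only mildly delicate point is the bookkeeping with primed indices in the $\gamma=\delta$ case of the $\phi_\alpha$-check, where one must apply the paper's convention $\gamma'=\gamma$ for coincident pairs; everything else is formal.
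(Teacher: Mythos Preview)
Your proposal is correct and follows the paper's approach. The paper offers no proof beyond the prefatory remark that the claim follows ``by comparing \eqref{comm-rel-x} with \eqref{comm-rel-A}''; you make this comparison explicit via the $\beta=\alpha$ specialisation and add the retraction $\phi_\alpha$ to certify that no further relations among the column generators are inherited from the ambient ideal of $\mlp$, a point the paper leaves implicit.
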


Next we introduce coproduct and counit for the matrix algebra $\mlp$, compatible with its 
algebra structure.
\begin{prop} 
The linear maps defined on the generators by 
\begin{eqnarray} \label{cop-cou}
&  \Delta: \mlp \to \mlp \ot \mlp \quad ,  \quad \A_{\mu \nu} \mapsto \sum_{\alpha=0}^{3} \A_{\mu \alpha} \ot \A_{\alpha \nu}  \nn
\\
&  \varepsilon: \mlp \to \k \quad , \quad  \A_{\mu \nu} \mapsto \delta_{\mu \nu} ~1~,
\end{eqnarray}
and extended as algebra morphisms, endow  $\mlp$ with a bialgebra structure. 
\end{prop}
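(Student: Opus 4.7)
The proof splits into four independent tasks: (i) $\Delta$ is coassociative; (ii) the counit identity $(\varepsilon\otimes\id)\Delta=\id=(\id\otimes\varepsilon)\Delta$ holds; (iii) $\varepsilon$ extends to an algebra morphism; (iv) $\Delta$ extends to an algebra morphism. Tasks (i) and (ii) are purely formal once one checks them on the generators and boil down to the associativity of matrix multiplication: $(\Delta\otimes\id)\Delta(\A_{\mu\nu})=\sum_{\alpha,\beta}\A_{\mu\alpha}\otimes\A_{\alpha\beta}\otimes\A_{\beta\nu}=(\id\otimes\Delta)\Delta(\A_{\mu\nu})$, and $(\varepsilon\otimes\id)\Delta(\A_{\mu\nu})=\sum_\alpha\delta_{\mu\alpha}\A_{\alpha\nu}=\A_{\mu\nu}$, and likewise for the other side. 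These identities hold without using any of the defining relations \eqref{comm-rel-A}.

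For (iii), the plan is to substitute $\varepsilon(\A_{\mu\alpha})=\delta_{\mu\alpha}$ into \eqref{comm-rel-A} and check that the resulting identity among Kronecker deltas is valid. A short case analysis on the index configurations (generically $(\mu,\alpha)=(\nu,\beta)$ produces a non-zero contribution from the first and fourth terms, while the pattern $(\mu,\alpha)=(\nu',\beta')$ engages the other two) reduces the four-term sum to the single relation $\ell_{\mu\nu}^2+p_{\mu\nu}p_{\nu'\mu'}=1$ of condition (c) of Definition~\ref{def:lp}. Configurations outside these patterns give $0=0$ by the symmetry and antisymmetry of $\ell$ and $p$.

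Task (iv) is the substantive one. The plan is to expand
$$
\Delta(\A_{\mu\alpha})\Delta(\A_{\nu\beta})=\sum_{\sigma,\tau}\A_{\mu\sigma}\A_{\nu\tau}\otimes\A_{\sigma\alpha}\A_{\tau\beta},
$$
and transform it into the RHS of \eqref{comm-rel-A} applied to the $\Delta(\A_{\bullet\bullet})$'s. The key is to use the equivalent form \eqref{rel-A} given by Proposition~\ref{prop:comm-rel-equiv}, which exhibits an FRT-type structure: each side involves a sum of quadratic monomials in $\A$ weighted by the \emph{same} two-term tensor in the other pair of indices. Concretely, I would apply \eqref{rel-A} in the left tensor slot (on the ``row'' indices $\mu,\nu$) and in the right tensor slot (on the ``column'' indices $\alpha,\beta$); since the $\ell,p$-tensor controlling both reductions is the same, the two substitutions match up and yield the desired four-term combination. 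Alternatively, one verifies directly that, modulo \eqref{comm-rel-A}, the difference between $\Delta(\A_{\mu\alpha})\Delta(\A_{\nu\beta})$ and the claimed RHS is zero by collecting the sixteen resulting monomials $\A_{*\sigma}\A_{*\tau}\otimes \A_{\sigma *}\A_{\tau *}$ and applying \eqref{lp}.

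The main obstacle will be the bookkeeping: the four-term nature of \eqref{comm-rel-A}, compounded in both tensor factors, produces a proliferation of summands involving both primed and unprimed indices. The strategy to keep this manageable is to work in the two-term form \eqref{rel-A} whenever possible, and to exploit the symmetry properties (a), (b) of Definition~\ref{def:lp} together with the quadratic identity (c) to collapse the sums. Reassuringly, the same cancellation pattern (using $\ell_{\mu\nu}^2+p_{\mu\nu}p_{\nu'\mu'}=1$) that the paper has already verified to show the consistency of \eqref{comm-rel-A} under iteration is exactly what is needed here.
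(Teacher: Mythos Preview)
Your outline is correct. The counit verification (your task~(iii)) is exactly the paper's: substituting $\varepsilon(\A_{\rho\sigma})=\delta_{\rho\sigma}$ into \eqref{comm-rel-A} and grouping the four right-hand terms pairwise, the configuration $\mu=\alpha$, $\nu=\beta$ yields coefficient $\ell_{\mu\nu}^2+p_{\mu\nu}p_{\nu'\mu'}=1$, while $\mu'=\alpha$, $\nu'=\beta$ yields $p_{\mu\nu}\ell_{\mu\nu}+\ell_{\mu\nu}p_{\nu\mu}=0$. For the coproduct (task~(iv)), however, your primary route is genuinely different from the paper's. The paper works directly with the four-term relation \eqref{comm-rel-A}: it expands $\Delta(\A_{\mu\alpha})\Delta(\A_{\nu\beta})$ minus the $\Delta$-image of the right-hand side of \eqref{comm-rel-A}, applies \eqref{comm-rel-A} again to rewrite the first tensor factor of the leading term and the second tensor factors of the remaining (grouped) terms, and then checks that all coefficients in the resulting expansion cancel via condition~\eqref{lp}. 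Your proposal to use instead the two-term equivalent form \eqref{rel-A} from Proposition~\ref{prop:comm-rel-equiv} is the cleaner FRT-style argument: since \eqref{rel-A} reads $\sum_{\sigma,\tau}\A_{\mu\sigma}\A_{\nu\tau}\,\cR_{\sigma\tau}^{~~\beta\alpha}=\sum_{\sigma,\tau}\cR_{\mu\nu}^{~~\sigma\tau}\,\A_{\sigma\beta}\A_{\tau\alpha}$, applying it once in each tensor slot of $\Delta(\text{relation})$ and relabelling the summation indices $(\sigma,\tau)\leftrightarrow(\sigma',\tau')$ matches the two sides directly, avoiding the long expansion altogether. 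Your approach buys brevity and conceptual clarity; the paper's direct computation has the minor advantage of not invoking Proposition~\ref{prop:comm-rel-equiv}. The ``alternative'' direct verification you sketch at the end is essentially what the paper carries out.
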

\begin{proof}
We need to show that the maps $\Delta$ and $\varepsilon$  preserve the commutation relations \eqref{comm-rel-A} and thus are well-defined algebra maps. As for the counit, it is well-defined if and only if
$$
 \delta_{\mu \alpha}\delta_{\nu \beta}= 
\ell_{\mu \nu} \ell_{\beta \alpha } \delta_{\nu \beta}\delta_{\mu \alpha} +
p_{\mu \nu} \ell_{\beta \alpha } \delta_{\nu' \beta}\delta_{\mu' \alpha} +
\ell_{\mu \nu} p_{\beta' \alpha'} \delta_{\nu \beta'}\delta_{\mu \alpha'} +
p_{\mu \nu} p_{\beta' \alpha'} \delta_{\nu' \beta'}\delta_{\mu' \alpha'} 
$$
that is, if and only if
$$
  \delta_{\mu \alpha}\delta_{\nu \beta}= 
\left( \ell_{\mu \nu} \ell_{\beta \alpha } + p_{\mu \nu} p_{\beta' \alpha'}  \right)  \delta_{\nu \beta}\delta_{\mu \alpha} +
\left(p_{\mu \nu} \ell_{\beta \alpha }  + \ell_{\mu \nu} p_{\beta' \alpha'}
\right)\delta_{\nu' \beta}\delta_{\mu' \alpha} \, .
$$  By using the conditions in Definition \eqref{def:lp} one directly verifies that this is the case:
$$
\left\{
\begin{array}{ll}
\ell_{\mu \nu} \ell_{\beta \alpha } + p_{\mu \nu} p_{\beta' \alpha'} = 1 & \mbox{ when } \nu = \beta, \mu= \alpha
\\
p_{\mu \nu} \ell_{\beta \alpha }  + \ell_{\mu \nu} p_{\beta' \alpha'} =0 
& \mbox{ when } \nu = \beta', \mu= \alpha'
\end{array}
\right . \, .
$$
For the coproduct we compute
\begin{align*}
&\Delta(\A_{\mu \alpha})\Delta(\A_{\nu \beta}) 
 - \ell_{\mu \nu} \ell_{\beta \alpha } \Delta(\A_{\nu \beta})\Delta(\A_{\mu \alpha}) +
p_{\mu \nu} \ell_{\beta \alpha } \Delta(\A_{\nu' \beta})\Delta(\A_{\mu' \alpha}) 
\\
 & \hspace{2.1cm} + \ell_{\mu \nu} p_{\beta' \alpha'} \Delta(\A_{\nu \beta'})\Delta(\A_{\mu \alpha'})
+ p_{\mu \nu} p_{\beta' \alpha'} \Delta(\A_{\nu' \beta'})\Delta(\A_{\mu' \alpha'}) 
\\
= &\sum_{\gamma,\tau} \big[ \A_{\mu \gamma} \A_{\nu \tau} \ot \A_{\gamma \alpha} \A_{\tau \beta}
 - \left( 
\ell_{\mu \nu} \ell_{\beta \alpha } \A_{\nu \tau} \A_{\mu \gamma} +
p_{\mu \nu} \ell_{\beta \alpha } \A_{\nu' \tau} \A_{\mu' \gamma}
\right) \ot  
\A_{\tau \beta} \A_{\gamma \alpha} +
\\
& -  \left( 
\ell_{\mu \nu} p_{\beta' \alpha'} \A_{\nu \tau} \A_{\mu \gamma} +
p_{\mu \nu} p_{\beta' \alpha'} \A_{\nu' \tau} \A_{\mu' \gamma} 
\right) \ot  \A_{\tau \beta'} \A_{\gamma \alpha'} \big] \, .
\end{align*}
We now use equations \eqref{comm-rel-A}  to rewrite the first factor of the tensor product  in the first addend and the second factors in the second and third addends:
\begin{align*}
\sum_{\gamma,\tau} \Big[ \Big( &\ell_{\mu \nu} \ell_{\tau \gamma } \A_{\nu \tau}\A_{\mu \gamma} +
p_{\mu \nu} \ell_{\tau \gamma } \A_{\nu' \tau}\A_{\mu' \gamma} +
 \ell_{\mu \nu} p_{\tau' \gamma'} \A_{\nu \tau'}\A_{\mu \gamma'} +
p_{\mu \nu} p_{\tau' \gamma'} \A_{\nu' \tau'}\A_{\mu' \gamma'} \Big)
\\
&  \hspace{12cm} \ot \A_{\gamma \alpha} \A_{\tau \beta} 
\\
-
 \big( &
\ell_{\mu \nu} \ell_{\beta \alpha } \A_{\nu \tau} \A_{\mu \gamma} +
p_{\mu \nu} \ell_{\beta \alpha } \A_{\nu' \tau} \A_{\mu' \gamma}
\big)  \ot  
\Big(
\ell_{\tau \gamma} \ell_{\alpha \beta } \A_{\gamma \alpha}\A_{\tau \beta} +
p_{\tau \gamma} \ell_{\alpha \beta } \A_{\gamma' \alpha}\A_{\tau' \beta}  
\\
&  \hspace{7cm} 
+ \ell_{\tau \gamma} p_{\alpha' \beta'} \A_{\gamma \alpha'}\A_{\tau \beta'}
+ p_{\tau \gamma} p_{\alpha' \beta'} \A_{\gamma' \alpha'}\A_{\tau' \beta'} 
\Big)
\\
- \big( &
\ell_{\mu \nu} p_{\beta' \alpha'} \A_{\nu \tau} \A_{\mu \gamma} +
p_{\mu \nu} p_{\beta' \alpha'} \A_{\nu' \tau} \A_{\mu' \gamma} 
\big) \ot 
\Big(
\ell_{\tau \gamma} \ell_{\alpha' \beta' } \A_{\gamma \alpha'}\A_{\tau \beta'} +
p_{\tau \gamma} \ell_{\alpha' \beta' } \A_{\gamma' \alpha'}\A_{\tau' \beta'} 
\\
&  \hspace{7.3cm} 
 +  \ell_{\tau \gamma} p_{\alpha \beta} \A_{\gamma \alpha}\A_{\tau \beta} +
p_{\tau \gamma} p_{\alpha \beta} \A_{\gamma' \alpha}\A_{\tau' \beta}
 \Big) \Big]
\end{align*}
thus obtaining
\begin{align*}   
\sum_{\gamma,\tau} & \Big\{
\big(\ell_{\mu \nu} \A_{\nu \tau} \A_{\mu \gamma} + p_{\mu \nu} \A_{\nu' \tau} \A_{\mu' \gamma} \big)  \ot  \Big[
\big( \ell_{\tau \gamma } - \ell_{\beta \alpha }^2 \ell_{\tau \gamma } -
p_{\beta' \alpha'}  \ell_{\tau \gamma} p_{\alpha \beta} 
\big)  \A_{\gamma \alpha}\A_{\tau \beta} 
\\ &
 \quad - \big(\ell_{\beta \alpha}^2 p_{\tau \gamma} +p_{\beta' \alpha'}  
p_{\tau \gamma} p_{\alpha \beta} \big) \A_{\gamma' \alpha}\A_{\tau' \beta}  - 
\big( \ell_{\beta \alpha } \ell_{\tau \gamma } p_{\alpha' \beta'}  +
p_{\beta' \alpha'}\ell_{\alpha' \beta' } \ell_{\tau \gamma }
\big) \A_{\gamma \alpha'}\A_{\tau \beta'}
\\ &
 \quad -  
\left(
 \ell_{\beta \alpha } p_{\tau \gamma}p_{\alpha' \beta'}  +
p_{\beta' \alpha'}\ell_{\alpha' \beta' } p_{\tau \gamma }
\right) \A_{\gamma' \alpha'}\A_{\tau' \beta'} 
\Big] 
\\ &+ 
\big( \ell_{\mu \nu} p_{\tau' \gamma'} \A_{\nu \tau'} \A_{\mu \gamma'} 
 +
p_{\mu \nu} p_{\tau' \gamma'} \A_{\nu' \tau'} \A_{\mu' \gamma'} \big)
\ot
\A_{\gamma \alpha}\A_{\tau \beta} \Big\} .
\end{align*}

Now in the  squared parenthesis, the coefficient $\left(\ell_{\beta \alpha}^2 p_{\tau \gamma} +p_{\beta' \alpha'} p_{\tau \gamma} p_{\alpha \beta} \right) = p_{\tau \gamma}$ is the only one which does not vanish. 
Thus the above reduces to
\begin{align*}
& \sum_{\gamma,\tau}  \big[
-  p_{\tau \gamma}\left(\ell_{\mu \nu} \A_{\nu \tau} \A_{\mu \gamma} +p_{\mu \nu} \A_{\nu' \tau} \A_{\mu' \gamma} \right)  \ot \A_{\gamma' \alpha}\A_{\tau' \beta} 
\\ 
&
\left( \ell_{\mu \nu} p_{\tau' \gamma'} \A_{\nu \tau'} \A_{\mu \gamma'} 
+ p_{\mu \nu} p_{\tau' \gamma'} \A_{\nu' \tau'} \A_{\mu' \gamma'} \right)
 \ot
\A_{\gamma \alpha}\A_{\tau \beta} \big] 
\\ 
= & \sum_{\gamma,\tau} 
\big( -  p_{\tau' \gamma'}\ell_{\mu \nu} \A_{\nu \tau'} \A_{\mu \gamma'} 
 - p_{\tau' \gamma'}p_{\mu \nu} \A_{\nu' \tau'} \A_{\mu' \gamma'} 
+ \ell_{\mu \nu} p_{\tau' \gamma'} \A_{\nu \tau'} \A_{\mu \gamma'} 
 + p_{\mu \nu} p_{\tau' \gamma'} \A_{\nu' \tau'} \A_{\mu' \gamma'} \big)
\\
& \qquad \ot \A_{\gamma \alpha}\A_{\tau \beta}  
=0 ,
\end{align*}
after a replacement  $\gamma \leftrightarrow \gamma'$ and 
$\tau \leftrightarrow \tau'$ (on summed indices) in the first addend. 
\end{proof}

Summarizing, the algebra $\mlp$ of Definition \ref{def:mlp} is a bialgebra with coproduct and counit in \eqref{cop-cou} and it is a transformation bialgebra for $\alp$. In particular we have:
\begin{thm}\label{thm:coaction}
The algebra $\alp$ is a left $\mlp$-comodule algebra with coaction 
\beq\label{coact2}
\delta: \alp \to \mlp \ot \alp ~, \qquad x_\mu \mapsto \sum_\nu \A_{\mu \nu} \ot x_\nu
\eeq
defined on the generators of $\alp$, and extended to the whole of $\alp$ as an  algebra morphism. 
The map $\delta$ extends to a coaction on the differential algebra $(\omlp,d)$ by requiring
\beq\label{dd2}
\delta \circ d = (\id \ot d)\delta \; .
\eeq
Also, the bialgebra $\mlp$ is universal among the bialgebras with these properties. 
\end{thm}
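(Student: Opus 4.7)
The plan is to verify three claims in sequence: that $\delta$ is a well-defined comodule algebra structure on $\alp$; that $\delta$ extends compatibly to $(\omlp,\dd)$; and that $\mlp$ is universal among bialgebras with these properties. For the first, extend $\delta$ as an algebra map on the free algebra $\k\langle x_0,\dots,x_3\rangle$ and show it descends to $\alp$, i.e.\ annihilates each generator \eqref{comm-rel-x} of the defining ideal $I$. The computation \eqref{deltaI}, once its right-hand side is written in the basis $\{x_\alpha x_\beta\}_{\alpha\le\beta}$ of $\alp^2$, produces condition \eqref{eq1} on the $\A_{\mu\nu}$. By Proposition \ref{prop:comm-rel-equiv} this is equivalent to \eqref{comm-rel-A}, the defining relations of $\mlp$, so $\delta:\alp\to \mlp\otimes \alp$ is a well-defined algebra map. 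The coaction axioms are then an immediate check on generators: both $(\Delta\otimes\id)\delta(x_\mu)$ and $(\id\otimes\delta)\delta(x_\mu)$ evaluate to $\sum_{\alpha,\nu}\A_{\mu\alpha}\otimes\A_{\alpha\nu}\otimes x_\nu$, while $(\varepsilon\otimes\id)\delta(x_\mu)=x_\mu$ follows from $\varepsilon(\A_{\mu\nu})=\delta_{\mu\nu}$, and both equalities extend to all of $\alp$ since all maps in sight are algebra morphisms.

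Second, extension to $\omlp$. Condition \eqref{dd2} forces $\delta(\dd x_\mu)=\sum_\nu \A_{\mu\nu}\otimes \dd x_\nu$, which determines the graded algebra extension of $\delta$. We must check that this extension respects the relations \eqref{one-forms} and \eqref{two-forms}. Applying $\delta$ to the two-form relation and collecting on the basis $\{\dd x_\alpha\,\dd x_\beta\}_{\alpha<\beta}$ of $\omlp^2$ yields condition \eqref{eq2}, which by Proposition \ref{prop:comm-rel-equiv} is again equivalent to \eqref{comm-rel-A}. The mixed relations \eqref{one-forms} have the same bilinear shape as \eqref{comm-rel-x} with one factor replaced by a one-form, and the analogous computation produces the combined conditions \eqref{eq1}--\eqref{eq2}, so no new constraint arises. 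Compatibility $\delta\circ\dd=(\id\otimes\dd)\delta$ propagates from generators to all of $\omlp$ because both sides are graded derivations from $\omlp$ into the $\omlp$-bimodule $\mlp\otimes\omlp$ (with $\mlp$ acting trivially) that agree on the degree-zero generators $x_\mu$.

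Third, universality. Let $B$ be any bialgebra carrying a grading-preserving algebra coaction $\delta':\alp\to B\otimes\alp$ satisfying the analogue of \eqref{dd}. The grading constraint forces $\delta'(x_\mu)=\sum_\nu b_{\mu\nu}\otimes x_\nu$ for uniquely determined $b_{\mu\nu}\in B$. Repeating the calculations above with $b_{\mu\nu}$ in place of $\A_{\mu\nu}$, the algebra-map property gives \eqref{eq1} and the compatibility with $\dd$ gives \eqref{eq2}; by Proposition \ref{prop:comm-rel-equiv} the $b_{\mu\nu}$ satisfy \eqref{comm-rel-A}, so the assignment $\A_{\mu\nu}\mapsto b_{\mu\nu}$ extends uniquely to an algebra morphism $\pi:\mlp\to B$. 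The coaction axioms for $\delta'$ evaluated on $x_\mu$, together with the linear independence of $\{x_\nu\}$ in $\alp$, force $\Delta_B(b_{\mu\nu})=\sum_\alpha b_{\mu\alpha}\otimes b_{\alpha\nu}$ and $\varepsilon_B(b_{\mu\nu})=\delta_{\mu\nu}$, matching \eqref{cop-cou}; hence $\pi$ is a bialgebra morphism, and $(\pi\otimes\id)\circ\delta=\delta'$ by construction. The main obstacle is conceptual rather than computational, namely isolating defining relations for $\mlp$ that suffice simultaneously for the algebra-map property and for compatibility with $\dd$; this obstacle has already been overcome through the equivalence \eqref{comm-rel-A}$\Leftrightarrow$\eqref{rel-A} of Proposition \ref{prop:comm-rel-equiv}, so the remaining work is a careful bookkeeping of the expansions \eqref{deltaI} and its two-form analogue.
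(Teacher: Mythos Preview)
Your proposal is correct and follows the paper's own route: the theorem is presented in the paper as a summary of the preceding constructions in \S\ref{sec:mlp}, with the well-definedness of $\delta$ on $\alp$ and on $(\omlp,\dd)$ already established via the computations \eqref{deltaI}, \eqref{eq2} together with Proposition~\ref{prop:comm-rel-equiv}, and universality stated as an immediate consequence of having imposed only the minimal conditions \eqref{cond1}, \eqref{cond3}. Your write-up is in fact more explicit than the paper's on two points---the verification of the coaction axioms on generators and the bialgebra-morphism property of the comparison map $\pi$ in the universality argument---but these are routine details the paper leaves to the reader, so the approach is the same.
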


The fact that 
 the bialgebra $\mlp$  is universal (the  initial object) in the category of graded bialgebras coacting on the graded differential algebra $\alp$ follows from the fact that we have determined $\mlp$ by imposing the minimal conditions under which the requirements \eqref{cond1} and \eqref{cond3} are satisfied.  
  
\subsection{The $*$-bialgebra structure of $\mlp$}
We conclude this section by showing that when $\k=\IC$, the bialgebra $\mlp$ can further be endowed with a $*$-structure, and that all maps constructed above in \S\ref{sec:mlp}  are compatible with it.

Let hence $\k=\IC$ and $\bar{\ell}_{\mu \nu}={\ell}_{\mu \nu}$, $\bar{p}_{\mu \nu}= p_{\nu \mu}$ as in \eqref{*lp}, (cf. \S\ref{sec:*lp}).
\begin{lem}\label{lem:*A}
The antilinear map $*$ on $\mlp$, defined on generators as
\beq\label{*A} 
*:\mlp \to  \mlp \; , \quad \A_{\mu \nu} \mapsto (\A_{\mu \nu})^*:=  \A_{\mu \nu}
\eeq
and extended as an anti-algebra map, 
endows $\mlp$ with a well-defined $*$-structure. Furthermore,  the coproduct $\Delta$ and counit $\varepsilon$ of $\mlp$, given in \eqref{cop-cou}, are $*$-morphisms for it.
Thus, $\mlp$ is a $*$-bialgebra.
\end{lem}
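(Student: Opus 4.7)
The plan is to verify well-definedness of $*$ by applying it to both sides of the defining relations \eqref{comm-rel-A} and checking that the result is again a consequence of the defining relations; then involutivity is immediate on generators; and compatibility with $\Delta$ and $\varepsilon$ is a short direct computation.

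First I would check that $*$ respects \eqref{comm-rel-A}. Applying the antilinear anti-algebra map $*$ to the left hand side gives $*(\A_{\mu\alpha}\A_{\nu\beta}) = \A_{\nu\beta}\A_{\mu\alpha}$. Applying it to the right hand side and using $\bar\ell_{\mu\nu} = \ell_{\mu\nu}$ together with $\bar p_{\mu\nu} = p_{\nu\mu}$ (and the symmetries $\ell_{\mu\nu}=\ell_{\nu\mu}$, $\ell_{\beta\alpha}=\ell_{\alpha\beta}$, $p_{\beta'\alpha'}=-p_{\alpha'\beta'}$) one obtains
\begin{align*}
\A_{\nu\beta}\A_{\mu\alpha} = \ell_{\nu\mu}\ell_{\alpha\beta}\A_{\mu\alpha}\A_{\nu\beta} + p_{\nu\mu}\ell_{\alpha\beta}\A_{\mu'\alpha}\A_{\nu'\beta} + \ell_{\nu\mu} p_{\alpha'\beta'}\A_{\mu\alpha'}\A_{\nu\beta'} + p_{\nu\mu} p_{\alpha'\beta'}\A_{\mu'\alpha'}\A_{\nu'\beta'},
\end{align*}
which is exactly \eqref{comm-rel-A} with the indices simultaneously swapped as $\mu \leftrightarrow \nu$ and $\alpha \leftrightarrow \beta$, hence a defining relation of $\mlp$. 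Thus the two-sided ideal of quadratic relations is $*$-stable and $*$ descends to $\mlp$.

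Next I would check involutivity: on generators $**\A_{\mu\nu}=\A_{\mu\nu}$, and since $*$ is an antilinear anti-homomorphism, $(**)$ is a linear algebra homomorphism fixing the generators, hence equals $\id_\mlp$. For the $*$-bialgebra property I would verify compatibility of $\Delta$ and $\varepsilon$ with $*$. For the coproduct, using the convention $(a\otimes b)^* = a^* \otimes b^*$ on $\mlp\otimes\mlp$,
\begin{equation*}
\Delta(\A_{\mu\nu}^*) = \Delta(\A_{\mu\nu}) = \sum_{\alpha} \A_{\mu\alpha}\otimes \A_{\alpha\nu} = \sum_\alpha \A_{\mu\alpha}^* \otimes \A_{\alpha\nu}^* = (\Delta(\A_{\mu\nu}))^*,
\end{equation*}
and both sides extend as anti-algebra maps to all of $\mlp$. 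For the counit, $\varepsilon(\A_{\mu\nu}^*) = \delta_{\mu\nu} = \overline{\delta_{\mu\nu}} = \overline{\varepsilon(\A_{\mu\nu})}$.

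The only potentially delicate step is the first one: one must check that the reality/conjugation conditions \eqref{*lp} on the parameters, together with the symmetries in Definition~\ref{def:lp}, exactly convert the $*$-image of the defining relation indexed by $(\mu,\nu,\alpha,\beta)$ into the defining relation indexed by $(\nu,\mu,\beta,\alpha)$. Once this bookkeeping with primes and signs is in place, the remaining assertions are routine.
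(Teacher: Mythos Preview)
Your proof is correct and follows essentially the same approach as the paper: check that applying $*$ to the defining relation \eqref{comm-rel-A} yields another defining relation, then remark that the coalgebra compatibility is straightforward. Your identification of the $*$-image as the relation \eqref{comm-rel-A} with \emph{both} swaps $\mu\leftrightarrow\nu$ and $\alpha\leftrightarrow\beta$ is in fact the accurate one (the paper displays the relation with only $\mu\leftrightarrow\nu$ exchanged, which is not literally what one obtains, though of course it is also a valid defining relation); and your explicit verification of involutivity and of the $*$-compatibility of $\Delta$ and $\varepsilon$ spells out what the paper leaves as ``one easily shows, in a similar way''.
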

\begin{proof}
 We have to show that the commutation relations \eqref{comm-rel-A} defining $\mlp$
 are preserved by the map $*$, that is that
\begin{multline}\label{A*prove}
*\big(\A_{\mu \alpha}\A_{\nu \beta}- \ell_{\mu \nu} \ell_{\beta \alpha } \A_{\nu \beta}\A_{\mu \alpha} -
p_{\mu \nu} \ell_{\beta \alpha } \A_{\nu' \beta}\A_{\mu' \alpha} \\ 
- \ell_{\mu \nu} p_{\beta' \alpha'} \A_{\nu \beta'}\A_{\mu \alpha'}  
-
p_{\mu \nu} p_{\beta' \alpha'} \A_{\nu' \beta'}\A_{\mu' \alpha'} \big) = 0 \, .
\end{multline}
By using $\bar{\ell}_{\mu \nu}={\ell}_{\mu \nu}$ and $\bar{p}_{\mu \nu}= p_{\nu \mu}$, we see  that this is just \eqref{comm-rel-A} for indices  $\mu , \nu$ exchanged:
$$
\A_{\nu \alpha}\A_{\mu \beta}= \ell_{\nu \mu} \ell_{\beta \alpha } \A_{\mu \beta}\A_{\nu \alpha} +
p_{\nu \mu} \ell_{\beta \alpha } \A_{\mu' \beta}\A_{\nu' \alpha} +
\ell_{\nu \mu} p_{\beta' \alpha'} \A_{\mu \beta'}\A_{\nu \alpha'} +
p_{\nu \mu} p_{\beta' \alpha'} \A_{\mu' \beta'}\A_{\nu' \alpha'} \, .
$$
One easily shows, in a similar way, the statement concerning the coalgebra structures.
\end{proof}

It is a direct observation that for $\mlp$ endowed with the above $*$-structure and $\alp$ with  \eqref{*x}, the isomorphism in Lemma~\ref{lem:obs1} is an isomorphism of $*$-algebras.
Finally,
\begin{prop}
The coaction 
$$
\delta: \alp \to \mlp \ot \alp ~, \qquad x_\mu \mapsto \sum_\nu \A_{\mu \nu} \ot x_\nu
$$
given in \eqref{coact} is a $*$-map with respect to the $*$-structures of $\alp$ and  $\mlp$ defined respectively in \eqref{*x} and \eqref{*A}.
\end{prop}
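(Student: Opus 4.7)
The plan is to verify the $*$-compatibility $\delta \circ {*} = {*} \circ \delta$ by a short two-step argument: check the identity on the algebra generators of $\alp$, then extend it to the whole algebra via a general multiplicativity principle.

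For the first step, I would equip $\mlp \ot \alp$ with the natural tensor product $*$-structure $(m \ot a)^* := m^* \ot a^*$ and then compute on each generator $x_\mu$. Using $\A_{\mu \nu}^* = \A_{\mu \nu}$ from Lemma~\ref{lem:*A} together with $x_\nu^* = x_\nu$ from \eqref{*x}, one gets immediately
$$
(\delta(x_\mu))^* = \sum_\nu \A_{\mu \nu}^* \ot x_\nu^* = \sum_\nu \A_{\mu \nu} \ot x_\nu = \delta(x_\mu) = \delta(x_\mu^*),
$$
so the two maps $\delta \circ {*}$ and ${*} \circ \delta$ agree on every $x_\mu$.

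For the second step I would observe that both $\delta \circ {*}$ and ${*} \circ \delta$ are antilinear anti-homomorphisms $\alp \to \mlp \ot \alp$. Indeed, $*$ is an antilinear anti-algebra map on each of $\alp$, $\mlp$ and (from the tensor-product convention above) on $\mlp \ot \alp$, while $\delta$ is a linear algebra morphism; so in either order of composition the result is antilinear and reverses the order of multiplication. Two antilinear anti-homomorphisms defined on $\alp$ that coincide on a set of algebra generators must coincide on the whole algebra (by induction on the length of a monomial in the $x_\mu$'s). This promotes the pointwise equality on generators to the full identity $\delta \circ {*} = {*} \circ \delta$ on $\alp$.

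No substantive obstacle is expected: the statement is essentially formal once the $*$-structures on $\alp$, on $\mlp$, and on their tensor product have been set up consistently (the content of \S\ref{sec:*lp} and Lemma~\ref{lem:*A}). The only mildly delicate bookkeeping point is to record explicitly that the tensor product $*$-structure is anti-multiplicative, which is what ensures that both compositions land in the same category of maps (antilinear anti-homomorphisms) and can therefore be compared on a generating set.
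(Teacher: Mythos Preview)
Your proposal is correct and matches the paper's stance: the paper states this proposition without proof, treating it as an immediate verification, and your argument (check on generators, then extend via the fact that both $\delta\circ{*}$ and ${*}\circ\delta$ are antilinear anti-homomorphisms) is precisely the standard short computation that justifies it.
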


\section{On symmetries of $\slp$}

In the previous section we have constructed a family of matrix bialgebras $\mlp$ coacting on the quantum spaces represented by $\alp$. When all  parameters $p_{\mu \nu}$ vanish and all $\ell_{\mu \nu}$ are equal to $1$ we recover the classical case: the algebra $\alp$ becomes the commutative algebra of polynomials in four coordinates $x_0, \dots,x_3$ and  the bialgebra $\mlp$ reduces to the commutative coordinate bialgebra 
of $4 \times 4$ matrices ${\rm Mat}_4(\k)$. 

We expect it is possible to determine conditions on the parameters $\ell_{\mu \nu}$, $p_{\mu \nu}$  
under which the corresponding bialgebra $\mlp$ admits suitable ideals 
that allow one to define quotient algebras (of $\mlp$) describing     
matrix quantum groups, as for the classical case. 

In particular we would like to determine conditions under which it is possible to define a quantum group of orthogonal matrices acting on $\slp$. 
For this we need to assume at least that conditions \eqref{condition} are satisfied (so that $\slp$ is defined), but we do not know whether these conditions are enough in general.
One needs to show that 
$I:=\langle M^t M - \II ~,~ M  M^t - \II \rangle$ is a well-defined ideal, for $\II$ the identity matrix, i.e. that the diagonal entries of $ M^t   M$ and $M  M^t$ are central in the algebra $\mlp$. Then
one would define $\olp$ to be 
the quotient of $\mlp$ by the ideal $I$.   Since $I$ is a bialgebra ideal,  
$\olp$ would inherit a bialgebra structure and become a Hopf algebra with antipode 
$S(M):= M^t$. Moreover,
the coaction $\delta$ in \eqref{coact} would restrict to a coaction on $\slp$,
being the sphere relation preserved by the coaction:
$$ \delta(\sum_\mu x_\mu x_\mu )= \sum_{\mu,\alpha,\beta} \A_{\mu \alpha} \A_{\mu \beta} \ot x_\alpha x_\beta = \sum_{\alpha,\beta} \delta_{\alpha \beta} \ot
x_\alpha x_\beta = \II \ot \sum_\alpha x_\alpha x_\alpha 
$$
(indeed for this we only need $\A^t \A=\II$). 

As shown in \cite{cdv} this construction can be carried out for the $\theta$-family described in \S \ref{ex:cl}. 
A similar analysis for the more general families will be reported elsewhere.

\end{document}